\newcommand\parit[1]{\textup{(\textit{#1})}}
\newcommand\parStyle[1]{\textrm{\mdseries\upshape({#1}\kern0.1ex)}}
\newenvironment{romanum}{%
	\begin{enumerate}[label=\parStyle{\itshape\roman*},labelwidth=\romanumlabelwd,leftmargin=1\romanumlabelwd,itemsep=0.2ex]%
}{%
	\end{enumerate}%
}
\newlength\romanumlabelwd
 \newtheorem{theorem}{Theorem}[section]
 \newtheorem{lemma}[theorem]{Lemma}
 \newtheorem{proposition}[theorem]{Proposition}
 \newtheorem{corollary}[theorem]{Corollary}
 \newtheorem{definition}[theorem]{Definition}
 \newtheorem{example}[theorem]{Example}
  \newtheorem{remark}[theorem]{Remark}
\DeclareMathOperator\Span{span}
\DeclareMathOperator\Tr{Tr}
\DeclareMathOperator\cb{cb}
\DeclareMathOperator\cbdiam{cb-diam}
\let\Re\RealPart
\let\Im\ImagPart
\newcommand\C{\mathbb{C}}
\renewcommand\vec\mathbf
\newcommand\ox{\otimes}
\let\subset\subseteq
\let\oldepsilon\epsilon
\let\epsilon\varepsilon
\let\varepsilon\oldepsilon
\let\le\leqslant
\let\ge\geqslant
\DeclareMathOperator\id{id}
\DeclareMathOperator\diam{diam}
\DeclareMathOperator\sdiam{sdiam}
\DeclareMathOperator\sigmadiam{\sigma\mathrm{diam}}
\DeclareMathOperator\cbsdiam{cb-sdiam}
\def\href#1#2{\texttt{#2}}
	\newcommand\ket[1]{\left| #1 \right\rangle\@ifnextchar\bra{\mspace{-4mu}}{}}
	\newcommand\bra[1]{\left\langle #1 \right|}
\begin{document}


\title{On numerical diameters and linear maps}

\author{Niel de Beaudrap}
\address{School of Engineering and Informatics, University of Sussex, \newline
\hspace*{\parindent}  Brighton, UK}
\email{niel.debeaudrap@sussex.ac.uk}

\author{Christopher Ramsey}
\address{Department of Mathematics and Statistics, MacEwan University, \newline
\hspace*{\parindent}  Edmonton, Alberta, Canada}
\email{ramseyc5@macewan.ca}


\makeatletter
\@namedef{subjclassname@2020}{%
  \textup{2020} Mathematics Subject Classification}
\makeatother

\subjclass[2020]{
47A12, 
47A30, 
15A60 
}
\keywords{Numerical range, diameter, completely positive, completely bounded, linear maps}

\begin{abstract}
This paper studies the diameter of the numerical range of bounded operators on Hilbert space and the induced seminorm, called the numerical diameter, on bounded linear maps between operator systems which is sensible in the case of unital maps and their scalar multiples. It is shown that the completely bounded numerical diameter is a norm that is comparable but not equal to the completely bounded norm. This norm is particularly interesting in the case of unital completely positive maps and their sections. 
\end{abstract}

\maketitle

\section{Introduction}

The numerical range and its maximum distance from the origin, the numerical radius, are widely studied mathematical objects, originating with Hausdorff and Toeplitz, see \cite{BonsallDuncan} for a thorough treatment of the subject. Numerical ranges have also been widely studied in quantum information theory \cite{Cao, Kribsetal} and in other applied settings as the numerical range can be a reasonable approximation for the spectrum.  

This paper concerns the related concept of the numerical diameter which is the diameter of the numerical range. This is especially well-behaved in the context of normal operators as the numerical diameter is equal to the spectral diameter, the diameter of the spectrum. 
This concept also has motivations from quantum information theory, where for a bounded Hermitian operator $H$, it represents the minimal norm of an operator $H + c1 \ge 0$, essentially taking the minimum eigenvalue of $H$ as a reference point for all other eigenvalues.
However, the numerical diameter has been studied far less than the numerical radius, appearing only a few times in the literature over the years \cite{Grabiner, Parlett, CNM, BourinLee} with different results from what we are presenting here. 

Section 2 introduces the spectral and numerical diameters of an operator and establishes that the numerical diameter is a seminorm that is invariant under translations of scalar multiples of the identity. In particular, the relationship to the operator norm and the numerical radius are worked out. 

Section 3 introduces the main consideration of the paper, which is the induced seminorm (again called the numerical diameter) of linear maps between operator systems given by the numerical diameter.
We describe an application for numerical diameters, on operators and on linear maps, to quantum information theory.
Recall, that  a (concrete) operator system is a self-adjoint, unital, closed subspace of bounded operators on a Hilbert space, $\mathcal S\subseteq B(\mathcal H)$ . That is, $E^*\in\mathcal S$ for every $E\in \mathcal S$. The self-adjoint operators will be denoted $\mathcal S_{sa}$. 
In this context, the bounded linear maps with a bounded numerical diameter are precisely those which are scalar multiples of unital maps, here called paraunital maps. As is typical with linear maps of operators much can be learned by considering their complete structure. To this end, we introduce the completely bounded numerical diameter. This turns out to be a norm comparable to, but distinct from, the cb-norm for paraunital maps, Theorem \ref{thm:cbdiamnorm}.

Section 4 studies unital completely positive maps and their sections (that is, their right inverses), and their behaviour with respect to the numerical diameter. It is shown that a ucp map has a contractive cb-numerical diameter, Proposition \ref{prop:contractivediameter}. This is used along with the rest of the section to prove that a section of a ucp map is an isometry if and only if the cb-numerical diameter is equal to 1, Theorem \ref{thm:cbdiamisometry}. The rest of the section explores the best that can be said of approximately isometric maps.

Section 5 considers translating finite-dimensional self-adjoint maps by multiples of the trace. There is always such a translation that makes the map completely positive, Theorem \ref{thm:tracecp}. One needs additional conditions, namely scaled trace-preserving, to ensure that there is a translation that is a section of a completely positive map. These translations fit in with the rest of the paper as they do not change the numerical diameter of the map. However, translations by scalar multiples of the trace usually change the completely bounded diameter so it does come at a cost.

\section{Spectral and Numerical Diameters of Operators}

We begin by introducing the main concepts of this paper.

\begin{definition}
For a (possibly infinite-dimensional) Hilbert space $\mathcal H$ and for $E \in B(\mathcal H)$, the \textbf{spectral diameter} of $E$ is  
\[
\lVert E \rVert_{\sigmadiam} = \sup\{|\lambda - \mu| : \lambda,\mu\in \sigma(E)\}
\]
where $\sigma(E)$ is the spectrum of eigenvalues of $E$; and the \textbf{spectral radius} of $E$ is
\[
\rho(E) = \sup\{|\lambda| : \lambda\in\sigma(E)\}\, .
\]
In the case where $E$ is self-adjoint this is more simply given as
\[
\lVert E \rVert_{\sigmadiam} = \lambda_{\max}(E) - \lambda_{\min}(E) \quad \textrm{and} \quad \rho(E) = \max\;\bigl\{ |\lambda_{\max}(E)|,\, |\lambda_{\min}(E)| \bigr\}\, .
\] 
\end{definition}

The spectrum of an operator is of course compact, so these suprema are actually maxima.
These two quantities impart some useful information for self-adjoint operators, and more generally for normal operators.
Specifically, for the spectral radius we have Gelfand's formula:
\[
\rho(E) = \lim_{n\rightarrow \infty} \|E^n\|^{1/n}\, .
\]
By functional calculus this implies that for $E\in B(\mathcal H)$ normal 
\[
\rho(E) = \|E\|,
\]
a result which one can also obtain from the Spectral Theorem.
However, the spectral diameter and spectral radius become trivial for many non-normal operators, e.g.,~anything with $\sigma(E) = \{0\}$. 
One can sensibly extend these definitions to non-normal operators through the numerical range.

\begin{definition}
For a Hilbert space $\mathcal H$ and an operator $E\in B(\mathcal H)$ the \textbf{numerical range} of $E$ is 
\[
W(E) = \left\{\frac{\langle Ev, v\rangle}{\langle v,v\rangle} : v\in \mathcal H, v\neq 0\right\}.
\]
\end{definition}

The numerical range is a convex set (the Toeplitz-Hausdorff theorem, \cite{Toeplitz, Hausdorff}) and is easily seen to be bounded, living in the disk of radius $\|E\|$. 
(That is, we have $W(E) \subseteq \| E \| \,\overline{\mathbb D}$, where $\mathbb D = \{ z \in \mathbb C : \lvert z \rvert < 1\}$, and $\overline{\mathbb D}$ is its closure.
It will occasionally be useful to describe applications of the numerical range to self-adjoint bounded operators which are related to a given bounded operator $E$:
\begin{lemma}
If $E\in B(\mathcal H)$, then
\[
W(\Re(E)) = \{ \Re(\lambda)  :  \lambda \in W(E)\} \quad \textrm{and} \quad W(\Im(E)) = \{ \Im(\lambda)  :  \lambda\in W(E)\}\,.
\]
\end{lemma}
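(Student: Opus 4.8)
The plan is to prove both identities by a direct computation from the definition of the numerical range, exploiting the fact that for any unit vector $v \in \mathcal H$ the inner product $\langle \Re(E) v, v\rangle$ is exactly the real part of $\langle E v, v\rangle$. Recall $\Re(E) = \tfrac12(E + E^*)$ and $\Im(E) = \tfrac1{2i}(E - E^*)$, both of which are self-adjoint, so $\langle \Re(E) v, v\rangle$ and $\langle \Im(E) v, v\rangle$ are real for every $v$.

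First I would fix a nonzero $v \in \mathcal H$; by homogeneity of the Rayleigh quotient we may assume $\langle v, v\rangle = 1$. Then I would compute
\[
\langle \Re(E) v, v\rangle = \tfrac12\bigl(\langle E v, v\rangle + \langle E^* v, v\rangle\bigr) = \tfrac12\bigl(\langle E v, v\rangle + \overline{\langle E v, v\rangle}\bigr) = \Re\bigl(\langle E v, v\rangle\bigr),
\]
using $\langle E^* v, v\rangle = \langle v, E v\rangle = \overline{\langle E v, v\rangle}$. An entirely analogous calculation with $\Im(E) = \tfrac1{2i}(E - E^*)$ gives $\langle \Im(E) v, v\rangle = \Im\bigl(\langle E v, v\rangle\bigr)$.

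Having these two pointwise identities, the set equalities follow at once: as $v$ ranges over all nonzero vectors of $\mathcal H$, the quotient $\langle E v, v\rangle / \langle v, v\rangle$ ranges over all of $W(E)$ by definition, and applying $\Re$ (respectively $\Im$) to each element produces exactly $W(\Re(E))$ (respectively $W(\Im(E))$) by the pointwise computation above. One should note the quotient normalization is harmless since $\langle v, v\rangle > 0$ is real, so dividing commutes with taking real and imaginary parts.

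There is no real obstacle here; the statement is essentially a bookkeeping lemma that unpacks the definition of $\Re(E)$ and $\Im(E)$. The only point requiring the slightest care is the conjugation identity $\langle E^* v, v\rangle = \overline{\langle E v, v\rangle}$, which is immediate from the definition of the adjoint together with conjugate symmetry of the inner product. I would present the proof in two or three lines for $\Re$ and simply remark that the $\Im$ case is identical, or deduce it from the $\Re$ case applied to $-iE$ since $\Re(-iE) = \Im(E)$ and multiplication by $-i$ rotates $W(E)$ accordingly.
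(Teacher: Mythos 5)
Your proof is correct and is essentially identical to the paper's: both verify the pointwise identity $\langle \Re(E)v,v\rangle = \Re(\langle Ev,v\rangle)$ for unit vectors using $\langle E^*v,v\rangle = \overline{\langle Ev,v\rangle}$, and note the imaginary case follows in the same way. Your added remark that the normalization by $\langle v,v\rangle > 0$ commutes with taking real and imaginary parts is a minor but harmless extra detail.
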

\begin{proof}
Let $v\in \mathcal H$ with $\|v\|=1$. Then
\begin{align*}
    \Re\left(\langle Ev,v\rangle\right) \;&=\; \tfrac{1}{2}\left(\langle Ev,v\rangle + \overline{\langle Ev,v\rangle}\right)
    \;=\; \tfrac{1}{2}\bigl(\langle Ev,v\rangle + \langle E^*v,v\rangle\bigr)
     \;=\; \langle \Re(E)v,v\rangle\,.
\end{align*}
The imaginary calculation follows similarly.
\end{proof}

Recall that the states over a C$^*$-algebra are the positive linear functionals with norm~1.
(This coincides in finite dimensions with maps of the form $E \mapsto \Tr(E q)$, for an arbitrary positive-semidefinite operator $q$ with unit trace.) In particular, states are related to the numerical range since $E \mapsto \langle E v,v\rangle$ for any $v\in \mathcal H, \|v\|=1$ is a state.

\begin{proposition}
For $E\in B(\mathcal H)$
\begin{align*}
\overline{W(E)} \ & = \ \left\{ \phi(E) : \phi \ \textrm{is a state on} \ B(\mathcal H)  \right\}
\\ & = \ \left\{ \phi(E) : \phi \ \textrm{is a state on} \ C^*(E)\right\}
\\ & = \ \left\{ \phi(E) : \phi \ \textrm{is a state on} \ \mathcal S\right\}
\end{align*}
for any operator system $\mathcal S \subseteq B(\mathcal H)$ such that $E\in \mathcal S$.
\end{proposition}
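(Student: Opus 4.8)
The plan is to prove the single general statement that, for \emph{any} operator system $\mathcal S\subseteq B(\mathcal H)$ with $E\in\mathcal S$, one has $\overline{W(E)}=\{\phi(E):\phi\text{ is a state on }\mathcal S\}$. Since $C^*(E)$ and $B(\mathcal H)$ are themselves operator systems containing $E$, the three displayed equalities are then instances of this one claim, which splits into two inclusions.

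For $\overline{W(E)}\subseteq\{\phi(E):\phi\text{ a state on }\mathcal S\}$, I would first observe that each vector functional $E'\mapsto\langle E'v,v\rangle$ with $v\in\mathcal H$, $\|v\|=1$, restricts to a state on $\mathcal S$ — it is plainly positive and unital — and sends $E$ to $\langle Ev,v\rangle$, so $W(E)$ lies in the target set. To promote $W(E)$ to its closure, I would show the target set is already closed: the state space of $\mathcal S$ is a weak-$*$ closed subset of the unit ball of $\mathcal S^*$ (positivity and unitality are preserved under weak-$*$ limits), hence weak-$*$ compact by Banach--Alaoglu, and $\phi\mapsto\phi(E)$ is weak-$*$ continuous, so its image is a compact — in particular closed — subset of $\mathbb C$ that contains $W(E)$, and therefore contains $\overline{W(E)}$.

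For the reverse inclusion I would run a separation argument. Fix a state $\phi$ on $\mathcal S$ and assume, for contradiction, that $\phi(E)\notin\overline{W(E)}$. The set $\overline{W(E)}$ is nonempty, closed, and convex in $\mathbb C\cong\mathbb R^2$ (convexity by Toeplitz--Hausdorff), and $\phi(E)$ sits at positive distance from it, so there are $\alpha\in\mathbb C$ and $t\in\mathbb R$ with $\Re(\bar\alpha\lambda)\le t$ for every $\lambda\in W(E)$ while $\Re(\bar\alpha\phi(E))>t$. By the computation in the proof of the Lemma above, the first condition reads $\langle\Re(\bar\alpha E)v,v\rangle\le t$ for all unit vectors $v$, that is, $t\idop-\Re(\bar\alpha E)\ge 0$; and this operator lies in $\mathcal S_{sa}$ because $\mathcal S$ is self-adjoint and unital. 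Applying the positive unital functional $\phi$, and using $\phi(E^*)=\overline{\phi(E)}$ so that $\phi\bigl(\Re(\bar\alpha E)\bigr)=\Re(\bar\alpha\phi(E))$, gives $t-\Re(\bar\alpha\phi(E))\ge 0$, contradicting $\Re(\bar\alpha\phi(E))>t$. Hence $\phi(E)\in\overline{W(E)}$.

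I do not expect a genuine obstacle here. The two points that need a little care are: making sure the target set is actually closed — which is precisely what weak-$*$ compactness of the state space together with weak-$*$ continuity of evaluation at $E$ delivers, so that $\overline{W(E)}$ is contained in it rather than merely dense in it; and, in the separation step, converting the real-linear separating functional on $\mathbb C$ into an operator inequality $t\idop-\Re(\bar\alpha E)\ge 0$ to which positivity of the state can be applied.
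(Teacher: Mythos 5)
Your proof is correct, and it takes a genuinely different route from the paper. The paper disposes of the proposition quickly: it cites the classical identification of $\overline{W(E)}$ with the image of the state space of $B(\mathcal H)$ (attributed to Arveson), observes that the second equality follows from the third, and proves the third by noting that restriction carries states of $B(\mathcal H)$ to states of $\mathcal S$ while Arveson's Extension Theorem carries states of $\mathcal S$ back to states of $B(\mathcal H)$. You instead prove the general operator-system statement directly and obtain the other two equalities as special cases: the inclusion $\overline{W(E)}\subseteq\{\phi(E)\}$ via vector states together with weak-$*$ compactness of the state space (Banach--Alaoglu) and weak-$*$ continuity of evaluation at $E$, and the reverse inclusion via a Hahn--Banach separation in $\mathbb R^2$ converted into the operator inequality $t\idop-\Re(\bar\alpha E)\ge 0$, to which positivity and self-adjointness of $\phi$ apply. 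Both arguments are sound. The paper's route is shorter and leans on standard machinery; yours is self-contained and replaces the extension theorem by a compactness-plus-separation argument (though for scalar-valued maps the extension step is itself essentially Hahn--Banach, so the gap in machinery is smaller than it appears). One small point of bookkeeping: the paper defines states on a C$^*$-algebra as norm-one positive functionals, whereas you use unital positive functionals on $\mathcal S$; these agree in the unital setting, but it is worth noting that for a positive functional on an operator system one has $\lVert\phi\rVert=\phi(1)$, which is what places the state space inside the unit ball for your Banach--Alaoglu step.
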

\begin{proof}
The first result can be found recorded in \cite{Arveson} but is certainly older. Note that the second equality follows from the third.

Now, every restriction of a state on $B(\mathcal H)$ to $\mathcal S$ is clearly a state on $\mathcal S$.
Conversely, by Arveson's Extension Theorem, cf. \cite[Theorem 7.5]{Paulsenbook}, every state on $\mathcal S$ can be extended to a state on $B(\mathcal H)$. 
\end{proof}

Identifying the closure of the numerical range with the image of the state space allows us to be vague about the concrete context of our operator systems. In particular, the closure of the numerical range is preserved by completely isometric order isomorphisms of the operator systems. One should note that $\overline{W(E)}$ is also equal to the images under the unital completely contractive linear functionals on a unital operator space $\mathcal M \subseteq B(\mathcal H)$ with $E\in \mathcal M$. Many of these ideas are pursued further in \cite{Farenick}.

\begin{proposition}[Theorem 8.14 \cite{Stone}]\label{prop:stone}
For every normal $E\in B(\mathcal H)$
\[
\overline{W(E)} \ = \ \operatorname{conv}(\sigma(E)),
\]
the convex hull of the spectrum.
\end{proposition}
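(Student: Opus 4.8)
The plan is to combine the identification of $\overline{W(E)}$ with the image of the state space (the Proposition immediately above) with the continuous functional calculus for the normal operator $E$. Since $E$ is normal, the unital C$^*$-algebra $C^*(E)$ generated by $E$ (together with $E^*$ and the identity) is commutative, so by the Gelfand--Naimark theorem it is $*$-isomorphic to $C(\sigma(E))$ via the continuous functional calculus, under which $E$ corresponds to the coordinate function $\iota\colon \sigma(E)\to\C$, $\iota(z)=z$. Applying the third equality of the Proposition to $\mathcal S = C^*(E)$ and transporting along this isomorphism gives
\[
\overline{W(E)} \;=\; \bigl\{\, \psi(\iota) : \psi \ \textrm{a state on}\ C(\sigma(E)) \,\bigr\}.
\]

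Next I would invoke the Riesz--Markov representation theorem: the states on $C(\sigma(E))$ are exactly the functionals $f\mapsto \int_{\sigma(E)} f\, d\mu$ for Borel probability measures $\mu$ on the compact set $\sigma(E)$. Hence $\overline{W(E)}$ is precisely the set of barycenters $\int_{\sigma(E)} z\, d\mu(z)$ as $\mu$ ranges over probability measures on $\sigma(E)$, and it remains only to identify this set of barycenters with $\operatorname{conv}(\sigma(E))$.

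For the inclusion into $\operatorname{conv}(\sigma(E))$: the set $\operatorname{conv}(\sigma(E))$ is compact, hence closed, since $\sigma(E)$ is compact in $\C\cong\R^2$ (Carath\'eodory); so if some barycenter $b=\int z\, d\mu$ lay outside it, a separating real-linear functional $\ell$ with $\ell\le\alpha$ on $\sigma(E)$ would yield $\ell(b)=\int_{\sigma(E)}\ell(z)\, d\mu \le \alpha$, a contradiction. For the reverse inclusion: each point mass $\delta_\lambda$ has barycenter $\lambda$, so $\sigma(E)$ is contained in the set of barycenters, and that set is convex because a convex combination $t\mu_1+(1-t)\mu_2$ of probability measures has barycenter the corresponding convex combination of the barycenters; therefore it contains $\operatorname{conv}(\sigma(E))$.

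The argument is essentially routine once the functional calculus is in hand; the one step that merits care is the barycenter characterization of $\operatorname{conv}(\sigma(E))$ — the interchange of a real-linear functional with the (vector-valued) integral, and the fact that the convex hull of a compact planar set is already closed. An alternative, more self-contained route bypasses the Proposition and argues straight from the spectral theorem: writing $E=\int z\, dP(z)$ for the projection-valued measure $P$ on $\sigma(E)$, every unit vector $v$ gives $\langle Ev,v\rangle = \int_{\sigma(E)} z\, d\mu_v(z)$ with $\mu_v(\Delta)=\langle P(\Delta)v,v\rangle$ a probability measure, so $W(E)\subseteq\operatorname{conv}(\sigma(E))$ and hence $\overline{W(E)}\subseteq\operatorname{conv}(\sigma(E))$; conversely each $\lambda\in\sigma(E)$ is approximated by such vector values by taking $v$ in the range of $P$ on a small ball about $\lambda$, so $\sigma(E)\subseteq\overline{W(E)}$, and convexity and closedness of $\overline{W(E)}$ complete the proof.
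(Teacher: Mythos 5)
Your proof is correct. Note, however, that the paper does not prove this proposition at all: it is stated as a citation to Stone's 1932 book (Theorem 8.14), so there is no ``paper's proof'' to compare against. Both of your arguments are standard and sound. The first route is attractive precisely because it reuses the proposition the paper has just established (identifying $\overline{W(E)}$ with the image of the state space of $C^*(E)$), reducing everything to the commutative picture: states on $C(\sigma(E))$ are Borel probability measures, and the barycentre set of probability measures on a compact planar set is its convex hull (which, as you rightly flag, is already closed by Carath\'eodory). One small point worth making explicit there is spectral permanence: the spectrum of $E$ computed in the subalgebra $C^*(1,E)$ agrees with its spectrum in $B(\mathcal H)$, so the Gelfand spectrum really is $\sigma(E)$. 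Your second, self-contained route via the spectral theorem is equally valid; the only ingredient you should name is that for normal operators every spectral point is an approximate eigenvalue (equivalently, $P(B(\lambda,\epsilon))\neq 0$ for all $\epsilon>0$ when $\lambda\in\sigma(E)$), which gives $\sigma(E)\subseteq\overline{W(E)}$, after which Toeplitz--Hausdorff convexity finishes the argument. Either version would serve as a complete proof of the cited result.
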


This highlights that the closure of the numerical range is a better object of study than the spectrum since it allows us to be context-free.

\begin{example}
As an operator system $C(\mathbb T)$ embeds completely isometrically into $C(\overline{\mathbb D})$ by the identity map $z^n\mapsto z^n$ for all $n\in\mathbb Z$, just not multiplicatively. Then $\sigma_{C(\mathbb T)}(z) = \mathbb T$ and $\sigma_{C(\overline{\mathbb D})}(z) = \overline{\mathbb D}$, which of course both have the same convex hull.
\end{example}

The upshot of the previous proposition is that if $E \in \mathcal S$ is a self-adjoint element in an operator system then $\lambda_{\max}(E)$ and $\lambda_{\min}(E)$ are well-defined regardless of how $\mathcal S$ is embedded into a specific $B(\mathcal H)$.

One can then extend the definitions of the spectral diameter and radius to something better behaved on non-normal operators, as follows.

\begin{definition}
For a Hilbert space $\mathcal H$ and an operator $E\in B(\mathcal H)$ the \textbf{numerical diameter} of $E$ is 
\[
\|E\|_{\mathrm{diam}} = \sup \, \bigl\{ |\lambda - \mu| : \lambda,\mu \in W(E) \bigr\}
\]
and the \textbf{numerical radius} of $E$ is
\[
r(E) = \sup\{|\lambda| : \lambda \in W(E)\}\, .
\]
\end{definition}

By Proposition \ref{prop:stone} for any normal $E\in B(\mathcal H)$ we have
\[
\|E\|_{\sigmadiam} = \|E\|_{\diam} \quad \textrm{and} \quad \rho(E) = r(E).
\]

\begin{remark}
The width of the numerical range has also been considered, that is, the smallest distance between parallel lines that touch the boundary of the numerical range \cite{BourinMhanna, CNM}. As well, \cite{BourinLee2} studies the inner diameter of the numerical range, the diameter of the largest disk that the numerical range contains.
\end{remark}


We now prove many elementary properties of the numerical diameter.

\begin{proposition}\label{prop:diamcontinuous}
For any $E \in B(\mathcal H)$, $\lVert E \rVert_{\mathrm{diam}} \le 2 \lVert E \rVert$ and $\| E\|_{\diam} \leqslant 2r(E)$. 
\end{proposition}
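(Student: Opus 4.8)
The plan is to derive both inequalities directly from the definition of the numerical diameter together with the elementary bound $W(E) \subseteq \|E\|\,\overline{\mathbb D}$ noted in the excerpt (and, for the second, from the definition of the numerical radius $r(E)$). For the first inequality, I would take any two points $\lambda, \mu \in W(E)$, both of which lie in the closed disk of radius $\|E\|$ about the origin, and apply the triangle inequality: $|\lambda - \mu| \le |\lambda| + |\mu| \le \|E\| + \|E\| = 2\|E\|$. Taking the supremum over all such pairs $\lambda,\mu$ gives $\|E\|_{\diam} \le 2\|E\|$.

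For the second inequality, the argument is essentially identical but uses $r(E)$ in place of $\|E\|$: by definition $r(E) = \sup\{|\lambda| : \lambda \in W(E)\}$, so every $\lambda \in W(E)$ satisfies $|\lambda| \le r(E)$. Hence for any $\lambda, \mu \in W(E)$ we again have $|\lambda - \mu| \le |\lambda| + |\mu| \le 2 r(E)$, and taking the supremum yields $\|E\|_{\diam} \le 2 r(E)$. (In fact this second bound subsumes the first, since $r(E) \le \|E\|$, but it is harmless to record both.)

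There is essentially no obstacle here: the statement is a routine consequence of the triangle inequality and the containment of the numerical range in a disk, and no continuity or topological subtlety is needed since we are only bounding a supremum. If one wanted to be slightly more careful about the word ``continuous'' suggested by the label \texttt{prop:diamcontinuous}, one could additionally note that $E \mapsto \|E\|_{\diam}$ is then Lipschitz (with constant $2$) with respect to the operator norm, because $\|\cdot\|_{\diam}$ is a seminorm — as established earlier — and a seminorm dominated by $2\|\cdot\|$ is automatically $2$-Lipschitz; but the inequalities as stated require only the one-line triangle-inequality estimate above.
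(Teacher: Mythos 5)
Your proof is correct and is essentially the same as the paper's: both bound each point of $W(E)$ by $\lVert E \rVert$ (respectively $r(E)$) and apply the triangle inequality, the paper merely writing the points of $W(E)$ explicitly as $\langle Ev,v\rangle/\langle v,v\rangle$. No gaps.
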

\begin{proof}
As we noted above, for every $v\in \mathcal H, v \neq 0,$ one has
\[
  \left|\frac{\langle Ev, v\rangle}{\langle v,v\rangle}\right|
\;\leqslant\;
  \|E\|\left|\frac{\langle v, v\rangle}{\langle v,v\rangle}\right|
\;=\;
  \|E\|.
\]
Thus, 
\begin{align*}
    \|E\|_{\mathrm{diam}}
  \;& =\;
    \sup\; \bigl\{ |\lambda - \mu| : \lambda,\mu \in W(E) \bigr\}
\\[.5ex] & = \;
  \sup\left\{ \left|\frac{\langle Ev, v\rangle}{\langle v,v\rangle} - \frac{\langle Ew, w\rangle}{\langle w,w\rangle}\right| : v,w\in\mathcal H, v,w\neq0 \right\}
  \;\leqslant\;
  2\|E\|.
\end{align*}
The second inequality follows more simply from the triangle inequality.
\end{proof}

\begin{proposition}[\cite{BonsallDuncan}]
The numerical radius is a norm on $B(\mathcal H)$ with
\[
r(E) \leqslant \|E\| \leqslant 2r(E) \quad \textrm{and} \quad r(E^*) = r(E)
\]
for every $E\in B(\mathcal H)$.
\end{proposition}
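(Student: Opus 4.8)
The plan is to dispatch the three norm axioms first, then the two inequalities, and finally the adjoint symmetry, leaning on the facts about normal operators already recorded above.

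First I would check that $r$ is a norm. Non-negativity $r(E)\ge 0$ and homogeneity $r(\lambda E)=|\lambda|\,r(E)$ are immediate from $W(\lambda E)=\lambda W(E)$. For the triangle inequality, note that for any unit vector $v$ one has $\langle (E+F)v,v\rangle=\langle Ev,v\rangle+\langle Fv,v\rangle$, so $|\langle (E+F)v,v\rangle|\le r(E)+r(F)$; taking the supremum over unit $v$ gives $r(E+F)\le r(E)+r(F)$. For definiteness, if $r(E)=0$ then $\langle Ev,v\rangle=0$ for all $v\in\mathcal H$, and the polarization identity (valid because $\mathcal H$ is a complex Hilbert space) forces $\langle Ev,w\rangle=0$ for all $v,w$, hence $E=0$.

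Next, the inequality $r(E)\le\|E\|$ is essentially the computation already made in the proof of Proposition~\ref{prop:diamcontinuous}: Cauchy--Schwarz gives $|\langle Ev,v\rangle|\le\|E\|\,\|v\|^2$, so every point of $W(E)$ has modulus at most $\|E\|$. The identity $r(E^*)=r(E)$ is equally quick: since $\langle E^*v,v\rangle=\overline{\langle Ev,v\rangle}$ for every $v$, the set $W(E^*)$ is the complex conjugate of $W(E)$, and conjugation preserves moduli. For the remaining inequality $\|E\|\le 2r(E)$, I would decompose $E=\Re(E)+i\Im(E)$ with $\Re(E)=\tfrac12(E+E^*)$ and $\Im(E)=\tfrac1{2i}(E-E^*)$ self-adjoint, so that $\|E\|\le\|\Re(E)\|+\|\Im(E)\|$. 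For a self-adjoint, hence normal, operator $A$ the identities quoted after Proposition~\ref{prop:stone} give $\|A\|=\rho(A)=r(A)$; applying this to $A=\Re(E)$ and using the Lemma ($W(\Re(E))=\{\Re\lambda:\lambda\in W(E)\}$) yields $\|\Re(E)\|=r(\Re(E))=\sup\{|\Re\lambda|:\lambda\in W(E)\}\le r(E)$, and likewise $\|\Im(E)\|\le r(E)$. Adding these gives $\|E\|\le 2r(E)$.

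The main obstacle is the bound $\|E\|\le 2r(E)$, whose only substantial ingredient is the equality $\|A\|=r(A)$ for self-adjoint $A$; fortunately this is already available to us through the normal-operator identities $\|A\|=\rho(A)$ and $\rho(A)=r(A)$ established earlier, so the argument reduces to the real/imaginary-part decomposition plus the Lemma. The one other place deserving care is the definiteness of $r$, which genuinely relies on the scalar field being $\mathbb C$, since the polarization identity used there fails over $\mathbb R$.
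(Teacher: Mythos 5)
Your proof is correct. The paper itself offers no proof here — the proposition is simply quoted from Bonsall--Duncan — and your argument is the standard one: polarization over $\mathbb C$ for definiteness, Cauchy--Schwarz for $r(E)\le\|E\|$, conjugation of the numerical range for $r(E^*)=r(E)$, and the Cartesian decomposition $E=\Re(E)+i\,\Im(E)$ together with $\|A\|=\rho(A)=r(A)$ for self-adjoint $A$ to get $\|E\|\le 2r(E)$. All the ingredients you invoke (the Lemma on $W(\Re(E))$, Proposition~\ref{prop:stone}, and the computation in Proposition~\ref{prop:diamcontinuous}) are indeed available at this point in the paper, and your remark that definiteness genuinely needs the complex scalar field is well taken.
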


\begin{proposition}%
\label{prop:diameter-real-projection}%
Let $E\in B(\mathcal H)$. 
Then 
\[
    \mathclap{\phantom{\max\limits_c}} \lVert E \rVert_{\diam} \;=\;
    \max\limits_{c}\ \bigl\lVert \Re(cE) \bigr\rVert_{\diam} \;=\;
    \max\limits_{c}\ \bigl\lVert \Im(cE)\bigr\rVert_{\diam} \;=\;
    \max\limits_{a,b}\ \tfrac{1}{2} \bigl\lVert a E + bE^* \bigr\rVert_{\diam}
\]
where the maxima are taken over $a,b,c \in \mathbb C$ such that $\lvert a \rvert = \lvert b \rvert = \lvert c \rvert = 1$.
\end{proposition}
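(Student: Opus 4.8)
The plan is to view the numerical diameter of $E$ as the largest one-dimensional ``width'' of the planar convex set $\overline{W(E)}$, and to recognise each such width as the numerical diameter of a self-adjoint operator built from $E$ by rotating and taking a real part.

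First I would record the elementary geometric identity that for any bounded set $S\subseteq\mathbb C$,
\[
\sup\bigl\{|\lambda-\mu| : \lambda,\mu\in S\bigr\} \;=\; \sup_{|c|=1}\Bigl(\,\sup_{z\in S}\Re(cz)\;-\;\inf_{z\in S}\Re(cz)\Bigr).
\]
Indeed, for fixed $c$ and any $\lambda,\mu\in S$ one has $\Re(c\lambda)-\Re(c\mu)\le|\lambda-\mu|$, while for a near-optimal pair $\lambda_0,\mu_0$ the choice $c=\overline{(\lambda_0-\mu_0)}\,/\,|\lambda_0-\mu_0|$ achieves $\Re(c\lambda_0)-\Re(c\mu_0)=|\lambda_0-\mu_0|$. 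Applying this with $S=W(E)$ produces $\lVert E\rVert_{\diam}$ on the left. On the right, the Lemma relating $W(\Re(\cdot))$ to $W(\cdot)$, together with $W(cE)=cW(E)$, gives $W(\Re(cE))=\{\Re(c\lambda):\lambda\in W(E)\}$; and since $\Re(cE)$ is self-adjoint, Proposition~\ref{prop:stone} identifies $\overline{W(\Re(cE))}$ with the interval $\bigl[\lambda_{\min}(\Re(cE)),\lambda_{\max}(\Re(cE))\bigr]$, so that
\[
\lVert\Re(cE)\rVert_{\diam} \;=\; \lambda_{\max}(\Re(cE))-\lambda_{\min}(\Re(cE)) \;=\; \sup_{\lambda\in W(E)}\Re(c\lambda)\;-\;\inf_{\lambda\in W(E)}\Re(c\lambda).
\]
Combining the two displays gives $\lVert E\rVert_{\diam}=\sup_{|c|=1}\lVert\Re(cE)\rVert_{\diam}$; passing to the compact set $\overline{W(E)}$ makes the optimal pair $\lambda_0,\mu_0$, hence the optimal $c$, genuinely attained, so the supremum is a maximum.

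The remaining two equalities I would deduce by substitution. From $\Im(F)=\Re(-iF)$ for any operator $F$ we get $\Im(cE)=\Re\bigl((-ic)E\bigr)$, and as $c$ runs over the unit circle so does $-ic$, whence $\max_{|c|=1}\lVert\Im(cE)\rVert_{\diam}=\max_{|c|=1}\lVert\Re(cE)\rVert_{\diam}$. For the last expression, note that $\lVert\cdot\rVert_{\diam}$ is absolutely homogeneous (since $W(tX)=tW(X)$), that $2\Re(cE)=cE+\bar cE^*$, and that for $|c|=1$ one has $\bar c\,(cE+\bar cE^*)=E+\bar c^{2}E^*$; hence $\lVert E+\bar c^{2}E^*\rVert_{\diam}=2\lVert\Re(cE)\rVert_{\diam}$. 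Conversely $aE+bE^*=a\bigl(E+(b/a)E^*\bigr)$ with $|b/a|=1$, and every unit scalar is of the form $\bar c^{2}$; so the sets $\bigl\{\tfrac12\lVert aE+bE^*\rVert_{\diam}:|a|=|b|=1\bigr\}$ and $\bigl\{\lVert\Re(cE)\rVert_{\diam}:|c|=1\bigr\}$ coincide, and in particular have the same maximum.

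I do not anticipate a real obstacle. The one point requiring a little care is the first step — justifying that the diameter of the compact convex set $\overline{W(E)}$ equals the largest of its one-dimensional projection widths, and that this largest value is attained — together with the mild phase bookkeeping needed to match $\tfrac12(aE+bE^*)$ with the operators $\Re(cE)$.
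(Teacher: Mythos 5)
Your proof is correct and takes essentially the same route as the paper's: both hinge on the rotation $c = \overline{(\lambda_0-\mu_0)}\,/\,\lvert\lambda_0-\mu_0\rvert$ aligning a diameter-achieving pair of $\overline{W(E)}$ with the real axis, together with the same phase bookkeeping identifying $\tfrac{1}{2}\lVert aE+bE^*\rVert_{\diam}$ with $\lVert\Re(cE)\rVert_{\diam}$. The only cosmetic difference is that you derive the upper bound from the pointwise inequality $\Re(c\lambda)-\Re(c\mu)\le\lvert\lambda-\mu\rvert$, whereas the paper invokes the triangle inequality for $\lVert\cdot\rVert_{\diam}$.
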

\begin{proof}
    For an arbitrary $c \in \mathbb C$ with $\lvert c \rvert = 1$, we clearly have $\Re(cE) = \tfrac{1}{2}(cE + c^*E^*) = \tfrac{1}{2}c(E + c^{-2} E^*)$.
    Letting $s = c^{-2}$, we then have
    \[
        \bigl\lVert \Re(cE) \bigr\rVert_{\diam} 
        \,=\;
        \tfrac{1}{2}\bigl\lVert cE + c^*E^* \bigr\rVert_{\diam}
        \,=\;
        \tfrac{1}{2}\bigl\lVert c(E + sE^*) \bigr\rVert_{\diam}
        \,=\;
        \tfrac{1}{2}\bigl\lVert E + sE^* \bigr\rVert_{\diam}\;. 
    \]
    For an arbitrary $a \in \mathbb C$ with $\lvert a \rvert = 1$, let $b = as$.
    We then also have
    \[
        \tfrac{1}{2}\bigl\lVert E + sE^* \bigr\rVert_{\diam}
        \,=\;
        \tfrac{1}{2}\bigl\lVert a^*(aE + bE^*) \bigr\rVert_{\diam}
        \,=\;
        \tfrac{1}{2}\bigl\lVert aE + bE^* \bigr\rVert_{\diam}    \;.
    \]
    Conversely, any such choices of $a,b,s \in \mathbb C$ determine a value of $c \in \mathbb C$ up to a factor of $\pm 1$.
    As $\bigl\lVert \Re(-cE) \bigr\rVert_{\diam} = \bigl\lVert \Re(cE) \bigr\rVert_{\diam}$\,, the range of the expressions $\bigl\lVert \Re(cE) \bigr\rVert_{\diam}$ and of $\tfrac{1}{2}\bigl\lVert aE + bE^* \bigr\rVert_{\diam}$
    are the same,
    as functions of complex units $a,b,c \in \mathbb C$.
    Taking $c' = -ic$, we obtain similar results for $\Im(cE) = -\tfrac{1}{2}i( cE  - c^*\!\!\;E^*) = \Re(c'E)$.    
    
    It then suffices to show that $\lVert E \rVert_{\diam}$ is the maximum value of these quantities over $a,b,c \in \mathbb C$ with $\lvert a \rvert = \lvert b \rvert = \lvert c \rvert = 1$.
    We may show that $\lVert E \rVert_{\diam}$ is indeed an upper bound:
    \[
        \tfrac{1}{2}\bigl\lVert aE + bE^* \bigr\rVert_{\diam}
        \:\!\le\,
        \tfrac{1}{2}\Bigl(\bigl\lVert aE  \bigr\rVert_{\diam} + \bigl\lVert bE^* \bigr\rVert_{\diam}\Bigr)
        =\,
        \tfrac{1}{2}\Bigl(\lVert E \rVert_{\diam} + \lVert E^* \rVert_{\diam}\Bigr)
        \:\!=\,
        \lVert E \rVert_{\diam} \,.
    \]
    To show that this bound can be achieved, let $\lambda, \mu \in \overline{W(E)}$ be such that $\lvert \lambda - \mu \rvert = \lVert E \rVert_{\diam}$.
    If $\lambda = \mu$, it follows that $E \propto 1_{B(\mathcal H)}$\,, in which case the proposition holds.
    Otherwise, consider
    \[
        c \;=\; \frac{\lambda^* - \mu^*}{\lvert \lambda - \mu \rvert},
    \]
    so that $c\lambda - c\mu = \lvert \lambda - \mu \rvert$.
    Then $c\lambda = p + ir$ and $c\mu = q + ir$ for some real values $p,q,r \in \mathbb R$, where furthermore $p > q$.    Hence, by taking closures in the previous lemma, $p = \Re(c\lambda)$ and $q = \Re(c\mu)$ are both elements of $\overline{W\bigl(\Re(cE)\bigr)}$, from which it follows that ${\bigl\lVert \Re(cE) \bigr\rVert_{\diam} \ge p - q = \lVert E \rVert_{\diam}}$\,.
\end{proof}

\begin{proposition}\label{prop:diamseminorm}
  The numerical diameter is a seminorm on $B(\mathcal H)$, but not a norm, with $\|E^*\|_{\diam} = \|E\|_{\diam}$ for every $E\in B(\mathcal H)$. Moreover, $\|E\|_{\mathrm{diam}} = 0$ if and only if $E=c1_\mathcal H$ for some $c\in \mathbb C$. 
\end{proposition}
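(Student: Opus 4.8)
The plan is to verify homogeneity and subadditivity directly from the definition of the numerical range (nonnegativity being evident as a supremum of absolute values), to record the adjoint-invariance identity along the way, and then to treat the characterization of the kernel — and hence the failure of positive-definiteness — separately.

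For homogeneity I would note that $\langle (cE)v,v\rangle = c\langle Ev,v\rangle$ for every unit vector $v$, so that $W(cE) = c\,W(E)$; it then follows immediately that $\|cE\|_{\diam} = |c|\,\|E\|_{\diam}$. For adjoint invariance I would use $\langle E^*v,v\rangle = \overline{\langle Ev,v\rangle}$, whence $W(E^*)$ is the complex conjugate of $W(E)$ and $\|E^*\|_{\diam} = \|E\|_{\diam}$ since conjugation is an isometry of $\mathbb C$. (Both of these are also recoverable from Proposition \ref{prop:diameter-real-projection}.) The subadditivity is the one step that calls for an actual argument: $W(E+F)$ is in general strictly smaller than $W(E)+W(F)$, so one cannot simply add numerical ranges, but every point of $W(E+F)$ is still attained by a \emph{single} unit vector. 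Given $\lambda_1,\lambda_2 \in W(E+F)$ realized by unit vectors $v,w$, I would write
\[
  \lambda_1 - \lambda_2 \;=\; \bigl(\langle Ev,v\rangle - \langle Ew,w\rangle\bigr) \;+\; \bigl(\langle Fv,v\rangle - \langle Fw,w\rangle\bigr),
\]
bound the first bracket by $\|E\|_{\diam}$ and the second by $\|F\|_{\diam}$, and take the supremum over $\lambda_1,\lambda_2$ to get $\|E+F\|_{\diam} \le \|E\|_{\diam} + \|F\|_{\diam}$.

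For the final claim, the easy direction is that $E = c1_{\mathcal H}$ forces $W(E) = \{c\}$ and hence $\|E\|_{\diam} = 0$, which in particular shows the numerical diameter is not a norm since $1_{\mathcal H} \ne 0$. Conversely, if $\|E\|_{\diam} = 0$ then $W(E)$ is a single point $\{c\}$, i.e.\ $\langle (E - c1_{\mathcal H})v, v\rangle = 0$ for all $v \in \mathcal H$; I would then conclude $E - c1_{\mathcal H} = 0$ either by the standard polarization identity for complex Hilbert spaces, or — more in keeping with the surrounding material — by observing that this says $r(E - c1_{\mathcal H}) = 0$ and invoking that the numerical radius is a norm.

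I expect the subadditivity step to be the only genuine obstacle, and even there the only subtlety is remembering that one must exploit the common-vector realization of points of $W(E+F)$ rather than naively adding numerical ranges; everything else is a direct unwinding of definitions.
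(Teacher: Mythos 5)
Your proposal is correct and follows essentially the same route as the paper: the subadditivity is proved by realizing two points of $W(E+F)$ with unit vectors $v,w$ and splitting the difference into an $E$-part and an $F$-part bounded by the respective diameters, adjoint invariance comes from $W(E^*)$ being the complex conjugate of $W(E)$, and the kernel is identified by noting that a zero-diameter numerical range is a singleton $\{c\}$ and that a vanishing quadratic form forces $E - c1_{\mathcal H} = 0$. The only difference is that you make explicit (via polarization, or via the numerical radius being a norm) the last step that the paper leaves implicit.
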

\begin{proof}
  The numerical diameter is not positive definite since, $\lVert c1_{\mathcal H} \rVert_{\mathrm{diam}} = 0$.
  However, it is evidently positive semidefinite and homogeneous, and it satisfies the triangle inequality:
\[
\begin{aligned}[b]{}
    \mspace{-18mu}
    \lVert E + F \rVert_{\mathrm{diam}}
  &=\;
    \sup \,\Biggl\{ \left|\frac{\langle (E+F)v, v\rangle}{\langle v,v\rangle} - \frac{\langle (E+F)w, w\rangle}{\langle w,w\rangle}\right| : v,w\in\mathcal H, v,w\neq0\Biggr\}
    \mspace{-30mu}
  \\[1ex]&\leqslant\;
    \begin{aligned}[t]
        \sup\; \Biggl\{ \biggl|
      &
        \frac{\langle Ev, v\rangle}{\langle v,v\rangle}
        - \frac{\langle Ew, w\rangle}{\langle w,w\rangle}\biggr| 
      \\&  + \biggl|\frac{\langle Fv, v\rangle}{\langle v,v\rangle}
        - \frac{\langle Fw, w\rangle}{\langle w,w\rangle}\biggr| : v,w\in\mathcal H, v,w\neq0\Biggr\}    
      \end{aligned}
  \\&\leqslant\;
    \lVert E \rVert_{\mathrm{diam}} + \lVert F \rVert_{\mathrm{diam}} \,.   
\end{aligned}
\]
The adjoint preserves the numerical identity quite simply since
\begin{align*}
\|E^*\|_{\diam} & = \sup \,\Biggl\{ \left|\frac{\langle E^*v, v\rangle}{\langle v,v\rangle} - \frac{\langle E^*w, w\rangle}{\langle w,w\rangle}\right| : v,w\in\mathcal H, v,w\neq0\Biggr\}
\\ & = \sup \,\Biggl\{ \left|\frac{\langle v, Ev\rangle}{\langle v,v\rangle} - \frac{\langle w, Ew\rangle}{\langle w,w\rangle}\right| : v,w\in\mathcal H, v,w\neq0\Biggr\}
\\ & = \sup \,\Biggl\{ \left|\overline{\frac{\langle Ev, v\rangle}{\langle v,v\rangle} - \frac{\langle Ew, w\rangle}{\langle w,w\rangle}}\right| : v,w\in\mathcal H, v,w\neq0\Biggr\}
\\ & = \|E\|_{\diam}\,.
\end{align*}

Lastly, suppose $\|E\|_{\mathrm{diam}} = 0$. This implies that the numerical range is a single point, the only convex set with diameter 0. Hence, say $W(E) = \{c\}$ for $c\in \mathbb C$. This implies that $W(E - c1_\mathcal H) = \{0\}$ and so
\[
\langle (E-c1_\mathcal H)v,v\rangle = 0, \ \forall v\in\mathcal H.
\]
Therefore, $E = c1_\mathcal H$.
\end{proof}

\begin{corollary}
If $E\in B(\mathcal H)$ and $c\in \mathbb C$ then
\[
\|E + c1_{\mathcal H}\|_{\diam} = \|E\|_{\diam}.
\]
\end{corollary}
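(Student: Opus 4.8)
The plan is to deduce this immediately from the seminorm structure established in Proposition~\ref{prop:diamseminorm}, using the vanishing of $\lVert c1_{\mathcal H}\rVert_{\diam}$ and homogeneity. First I would apply the triangle inequality in one direction: $\lVert E + c1_{\mathcal H}\rVert_{\diam} \le \lVert E\rVert_{\diam} + \lVert c1_{\mathcal H}\rVert_{\diam} = \lVert E\rVert_{\diam}$, since $\lVert c1_{\mathcal H}\rVert_{\diam} = 0$ by Proposition~\ref{prop:diamseminorm}. For the reverse inequality I would write $E = (E + c1_{\mathcal H}) + (-c)1_{\mathcal H}$ and apply the triangle inequality again, together with $\lVert (-c)1_{\mathcal H}\rVert_{\diam} = 0$, to get $\lVert E\rVert_{\diam} \le \lVert E + c1_{\mathcal H}\rVert_{\diam}$. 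Combining the two gives equality.

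Alternatively, and perhaps more transparently, one can argue directly at the level of numerical ranges: for any unit vector $v$ one has $\langle (E + c1_{\mathcal H})v, v\rangle = \langle Ev, v\rangle + c$, so $W(E + c1_{\mathcal H}) = W(E) + c$ is a rigid translate of $W(E)$; since the diameter of a subset of $\mathbb C$ is translation-invariant, $\lVert E + c1_{\mathcal H}\rVert_{\diam} = \lVert E\rVert_{\diam}$. Either route is essentially a one-line argument.

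There is no real obstacle here — the only thing to be careful about is that this corollary is exactly the promised translation-invariance of the numerical diameter under scalar multiples of the identity, so I would present it as a clean consequence rather than reproving anything. I would go with the seminorm argument for brevity, since it reuses Proposition~\ref{prop:diamseminorm} directly.
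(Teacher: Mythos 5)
Your proposal is correct and matches the paper's (implicit) intent: the corollary is stated without proof precisely because it follows from the seminorm properties and the vanishing of $\lVert c1_{\mathcal H}\rVert_{\diam}$ established in Proposition~\ref{prop:diamseminorm}, which is exactly your first argument. Both of your routes are valid one-liners; nothing further is needed.
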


We can use this translation invariance to get some lower bounds of the numerical diameter.

\begin{proposition}\label{prop:diameterlowerbounds}
Let $E\in B(\mathcal H)$. 
\begin{enumerate}
\item 
    There exists a $c_E\in \mathbb C$ such that $
r(E-c_E 1_{\mathcal H}) \leqslant \frac{1}{\sqrt 3}\|E\|_{\diam}$.
\item
    For $E$ self-adjoint, there exists a $k_E \in \mathbb C$ such that $\|E-k_E 1_{\mathcal H}\| = \frac{1}{2}\|E\|_{\diam}$.
    \smallskip
\item
    For any $E$, $\bigl\lVert E - (k_{\!\:\Re(E)} + i\, k_{\!\:\Im(E)})1_{\mathcal H} \bigr\rVert \leqslant \|E\|_{\diam}$.
\end{enumerate}
\end{proposition}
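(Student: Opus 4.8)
I would prove the three parts in the order (2), (3), (1): part (3) follows from part (2) by splitting into self-adjoint real and imaginary parts, while part (1) is essentially independent and rests on a classical fact about planar convex sets. For part (2), since a self-adjoint $E$ is normal, Proposition~\ref{prop:stone} gives $\overline{W(E)} = [\lambda_{\min}(E),\lambda_{\max}(E)]$, so $\|E\|_{\diam} = \lambda_{\max}(E) - \lambda_{\min}(E)$. I would take $k_E = \tfrac12(\lambda_{\max}(E)+\lambda_{\min}(E))$, the (real) midpoint of that interval. Then $E - k_E 1_{\mathcal H}$ is self-adjoint with $\lambda_{\max}(E - k_E 1_{\mathcal H}) = -\lambda_{\min}(E - k_E 1_{\mathcal H}) = \tfrac12\|E\|_{\diam}$, whence $\|E - k_E 1_{\mathcal H}\| = \rho(E - k_E 1_{\mathcal H}) = \tfrac12\|E\|_{\diam}$, using that the operator norm equals the spectral radius for a normal operator.

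For part (3), write $E = \Re(E) + i\,\Im(E)$ with both summands self-adjoint, and let $k_{\Re(E)}, k_{\Im(E)} \in \mathbb R$ be the real constants produced by part (2) applied to $\Re(E)$ and $\Im(E)$. The triangle inequality for the operator norm gives
\[
\bigl\|E - (k_{\Re(E)} + i\,k_{\Im(E)})1_{\mathcal H}\bigr\|
\;\le\; \bigl\|\Re(E) - k_{\Re(E)}1_{\mathcal H}\bigr\| + \bigl\|\Im(E) - k_{\Im(E)}1_{\mathcal H}\bigr\|
\;=\; \tfrac12\bigl(\|\Re(E)\|_{\diam} + \|\Im(E)\|_{\diam}\bigr).
\]
It then remains to note that $\|\Re(E)\|_{\diam} \le \|E\|_{\diam}$ and $\|\Im(E)\|_{\diam} \le \|E\|_{\diam}$, which are the unit choices $c=1$ and $c=i$ in Proposition~\ref{prop:diameter-real-projection} (the latter via $\Re(iE) = -\Im(E)$). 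Hence the right-hand side above is at most $\|E\|_{\diam}$.

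For part (1), the essential point is that $\overline{W(E)}$ is a compact convex subset of $\mathbb C \cong \mathbb R^2$ with diameter exactly $\|E\|_{\diam}$. By the planar case of Jung's theorem, any planar set of diameter $d$ lies in a closed disk of radius $d/\sqrt3$; take $c_E$ to be the centre of such a disk containing $\overline{W(E)}$. Since $W(E - c_E 1_{\mathcal H}) = W(E) - c_E$ and every point of $W(E)$ is within distance $\tfrac1{\sqrt3}\|E\|_{\diam}$ of $c_E$, we get $r(E - c_E 1_{\mathcal H}) \le \tfrac1{\sqrt3}\|E\|_{\diam}$. If a self-contained argument is preferred to quoting Jung's theorem, one can use the minimal enclosing circle of the compact set $\overline{W(E)}$: it is either spanned by two of its points as a diameter, giving radius $\le \tfrac12\|E\|_{\diam} < \tfrac1{\sqrt3}\|E\|_{\diam}$, or it is the circumcircle of three of its points forming a triangle with no obtuse angle; the largest angle $\theta$ of that triangle satisfies $60^\circ \le \theta \le 90^\circ$, so the opposite side (a chord of $\overline{W(E)}$) has length at most $\|E\|_{\diam}$ and by the law of sines the circumradius is at most $\|E\|_{\diam}/(2\sin 60^\circ) = \tfrac1{\sqrt3}\|E\|_{\diam}$. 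The only genuine input is this planar covering bound; parts (2) and (3) are routine once the explicit midpoint in (2) is fixed.
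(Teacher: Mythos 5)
Your proof is correct and follows essentially the same route as the paper: the same midpoint $k_E = \tfrac12(\lambda_{\max}+\lambda_{\min})$ for part (2), the same real/imaginary decomposition plus Proposition~\ref{prop:diameter-real-projection} for part (3), and Jung's theorem applied to $\overline{W(E)}$ for part (1). The only addition is your optional self-contained minimal-enclosing-circle argument in place of citing Jung, which is a fine (and correct) supplement but not a different method.
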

\begin{proof}
For the first inequality, Jung's Theorem \cite{Jung} gives that every bounded convex set in the complex plane of diameter $d$ lies within (and on the boundary of) a circle of radius $\frac{1}{\sqrt 3}d$. This is an optimal bound as the equilateral triangle of side length 1 has diameter 1 and circumradius $\smash{\frac{1}{\sqrt 3}}$. In our case, choose a circle of radius $\frac{1}{\sqrt 3}\|E\|_{\diam}$ centred at $c_E$ that encircles $W(E)$. Thus, the operator $E-c_E 1_\mathcal H$ has numerical range inside a circle of radius $\frac{1}{\sqrt 3}\|E\|_{\diam}$ centred at the origin. This then gives the desired bound, which is saturated for the diagonal unitary matrix $E = 1 \oplus \omega \oplus \omega^2 \in M_3(\mathbb C)$ for $\omega = \exp(2 \pi i /3)$.

For the second result, where $E$ is self-adjoint, define
\[
k_E \ =\  \tfrac{1}{2}\bigl(\lambda_{\max}(E) + \lambda_{\min}(E)\bigr) \ \in \ \mathbb R.
\]
This implies that,
\[
\lambda_{\max}(E - k_E 1_{\mathcal H}) \ =\  \lambda_{\max}(E) - k_E \ =\  \tfrac{1}{2}\bigl(\lambda_{\max}(E) - \lambda_{\min}(E)\bigr)
\]
and
\[
\lambda_{\min}(E - k_E 1_{\mathcal H})\  =\  \lambda_{\min}(E) - k_E \ =\  -\tfrac{1}{2}\bigl(\lambda_{\max}(E) - \lambda_{\min}(E)\bigr)
\]
Hence,
\begin{align*}
\|E-k_E 1_{\mathcal H}\| \ & = \ \max \Bigl\{ \bigl|\lambda_{\max}(E - k_E 1_{\mathcal H})\bigr|\;,\ \bigl|\lambda_{\min}(E - k_E 1_{\mathcal H})\bigr| \Bigr\}
\\ & = \ \tfrac{1}{2}(\lambda_{\max}(E) - \lambda_{\min}(E))
\ = \ \tfrac{1}{2}\|E\|_{\diam}.
\end{align*}
Finally, using Proposition~\ref{prop:diameter-real-projection},
for any $E\in B(\mathcal H)$ we have
\begin{align*}
    \bigl\lVert E - (k_{\!\:\Re(E)} + i\, k_{\!\:\Im(E)})1_{\mathcal H} \bigr\rVert
    \ & \leqslant \ 
    \bigl\lVert\!\:\Re(E) - k_{\!\:\Re (E)} 1_{\mathcal H} \:\! \bigr\rVert
    + 
    \bigl\lVert\!\:\Im(E) - k_{\!\:\Im (E)}1_\mathcal H \:\!\bigr\rVert
\\[1ex] & = \ 
    \tfrac{1}{2}\bigl\lVert\!\:\Re(E)\bigr\rVert_{\diam} +\; \tfrac{1}{2}\bigl\lVert\!\:\Im(E)\bigr\rVert_{\diam}
\\[1ex] & \le \ 
    \tfrac{1}{2}\lVert E\rVert_{\diam} +\; \tfrac{1}{2}\lVert E\rVert_{\diam}
\ = \ \|E\|_{\diam}\,.\qedhere
\end{align*}
\end{proof}

We prove one final lemma in this section giving a taste of what numerical diameter calculations are like.
\begin{lemma}%
    \label{lemma:2x2-matrix-unit-numerical-diam}%
    Let $E \in B(\mathcal H)$ be given by $E = u v^\ast$, for unit vectors $u, v \in \mathcal H$.
    Then $\lVert E \rVert_{\diam} = 1$.
\end{lemma}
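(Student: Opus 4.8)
The plan is to compute the numerical range $W(E)$ closely enough to read off its diameter from both sides. Since $E = uv^*$ acts by $x \mapsto \langle x, v\rangle u$, for every unit vector $x$ one has $\langle Ex, x\rangle = \langle x, v\rangle\langle u, x\rangle$, so the relevant information about $x$ is just how it meets $\Span\{u,v\}$. Because $\lVert E\rVert_{\diam}$ is unchanged by multiplying $E$ by a complex unit (Proposition~\ref{prop:diamseminorm}) and $(e^{i\theta}u)v^*$ is again of the required form, I would first rotate so that $t := \langle u, v\rangle$ is real with $0 \le t \le 1$, and then — assuming $t<1$, the case $t=1$ giving $E = vv^*$, a rank-one projection, with $W(E) = [0,1]$ — write $u = tv + \sqrt{1-t^2}\,w$ with $w \perp v$ a unit vector. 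For a unit vector $x$, set $\alpha = \langle x, v\rangle$ and $\beta = \langle w, x\rangle$; Bessel's inequality gives $|\alpha|^2 + |\beta|^2 \le 1$, and a one-line computation gives $\langle Ex, x\rangle = t|\alpha|^2 + \sqrt{1-t^2}\,\alpha\beta$. (I tacitly take $\dim\mathcal H \ge 2$, without which the statement fails.)

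For the upper bound $\lVert E\rVert_{\diam} \le 1$ I would show that $W(E)$ lies in the closed disk of radius $\tfrac12$ about $\tfrac t2$. Writing $s = |\alpha|^2$ and using $|\beta| \le \sqrt{1-s}$,
\[
\bigl\lvert \langle Ex,x\rangle - \tfrac t2 \bigr\rvert \;=\; \bigl\lvert t(s - \tfrac12) + \sqrt{1-t^2}\,\alpha\beta \bigr\rvert \;\le\; t\,\bigl\lvert s - \tfrac12 \bigr\rvert + \sqrt{1-t^2}\,\sqrt{s(1-s)}\,,
\]
and the substitution $s = \tfrac12(1+\cos\varphi)$ turns the right-hand side into $\tfrac12\bigl(t\lvert\cos\varphi\rvert + \sqrt{1-t^2}\,\lvert\sin\varphi\rvert\bigr)$, which is at most $\tfrac12$ by Cauchy--Schwarz applied to the unit vectors $(t,\sqrt{1-t^2})$ and $(\lvert\cos\varphi\rvert,\lvert\sin\varphi\rvert)$. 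Hence $\diam W(E) \le \diam\bigl(\tfrac t2 + \tfrac12\overline{\mathbb D}\bigr) = 1$.

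For the lower bound $\lVert E\rVert_{\diam} \ge 1$ I would exhibit two explicit points of $W(E)$ at distance $1$. The unit vectors $x_+ = \sqrt{\tfrac{1+t}{2}}\,v + \sqrt{\tfrac{1-t}{2}}\,w$ and $x_- = \sqrt{\tfrac{1-t}{2}}\,v - \sqrt{\tfrac{1+t}{2}}\,w$ give, via the formula above, $\langle Ex_+, x_+\rangle = \tfrac{1+t}{2}$ and $\langle Ex_-, x_-\rangle = \tfrac{t-1}{2}$, whose difference has modulus $1$. (These are precisely the vectors realising the endpoints of $W(E)\cap\mathbb R$.) Combining with the upper bound yields $\lVert E\rVert_{\diam} = 1$.

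I do not foresee any real difficulty; the only step needing thought is the upper bound, since the crude estimate $\lVert E\rVert_{\diam}\le 2\lVert E\rVert = 2$ from Proposition~\ref{prop:diamcontinuous} is off by a factor of two, so one must genuinely localise $W(E)$ inside a disk of radius $\tfrac12$ rather than $1$. An alternative, arguably slicker, route uses Proposition~\ref{prop:diameter-real-projection}: one has $\lVert E\rVert_{\diam} = \max_{\lvert c\rvert = 1}\lVert\Re(cE)\rVert_{\diam}$, where each $\Re(cE) = \tfrac12(cuv^* + \bar c\, vu^*)$ is self-adjoint of rank at most two; its two nonzero eigenvalues have product $\tfrac14(\lvert\langle u,v\rangle\rvert^2 - 1) \le 0$, hence (when $u,v$ are independent) opposite signs, so $\lVert\Re(cE)\rVert_{\diam}$ is their difference, which one reads off from $\Tr\bigl(\Re(cE)\bigr)$ and $\Tr\bigl(\Re(cE)^2\bigr)$ as $\sqrt{1 - \Im(c\langle u,v\rangle)^2} \le 1$, with equality attained by a suitable $c$.
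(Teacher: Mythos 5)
Your proof is correct, and it follows the same basic strategy as the paper's---absorb a phase so that $\langle u,v\rangle = t \in [0,1]$, work inside $\Span\{u,v\}$, and compute $\langle Ex,x\rangle$ explicitly---but the two arguments diverge at the upper bound, and yours is the more careful of the two. The paper expands $F=cE$ in the orthonormal basis $a \propto u+w$, $b \propto u-w$ (with $w$ the phase-rotated $v$) and records $x^*Fx$ as the real convex combination $\tfrac12(1+s)\lvert x_1\rvert^2 - \tfrac12(1-s)\lvert x_2\rvert^2$; as written this omits the purely imaginary contribution of the anti-Hermitian cross term $ba^* - ab^*$, and so treats $W(F)$ as a real segment when it is in fact a nondegenerate ellipse (the full disk of radius $\tfrac12$ when $s=0$, as the paper itself uses later in the block-matrix example). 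Your containment $\bigl\lvert \langle Ex,x\rangle - \tfrac t2 \bigr\rvert \le \tfrac12$, obtained via $s=\tfrac12(1+\cos\varphi)$ and Cauchy--Schwarz, handles the complex values honestly and delivers the upper bound $\lVert E\rVert_{\diam}\le 1$ that the paper's computation only gestures at; your lower-bound witnesses $x_\pm$ are, up to normalisation, exactly the paper's vectors $a$ and $b$ and give the same endpoints $\tfrac{1+t}{2}$ and $\tfrac{t-1}{2}$. Two further points in your favour: invoking Bessel's inequality $\lvert\alpha\rvert^2+\lvert\beta\rvert^2\le 1$ lets you dispense with even the formal reduction to $\mathbb C^2$, and your remark that the statement tacitly requires $\dim\mathcal H\ge 2$ is right (in one dimension $uv^*$ is a unimodular scalar with one-point numerical range). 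Your sketched alternative via Proposition~\ref{prop:diameter-real-projection}, reading the spectral diameter of $\Re(cE)$ off the two traces, is also sound and is essentially the elliptical range theorem for the rank-one compression; either route is a complete proof.
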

\begin{proof}
    Both $E$ and $E^*$ are 0 when restricted to $\operatorname{span}\{u,v\}^\perp$ and so without loss of generality we can just consider the case $\mathcal H = \mathbb C^2$.
    
    If $cu=v$, the result is evident, as then $E$ is proportional by a unit scalar to a rank-1 self-adjoint projection.
    Otherwise, let 
    $c \in \mathbb C$ with $\lvert c \rvert = 1$, such that $s = c v^* u \in [0,1)$.
    Let $w = \bar c v$, so that $w^* u = s$: then $F = c E =  u  w^*$  has the same numerical diameter as $E$.
    Let $ a = {( u +  w)/\lVert  u + w \rVert}$ and $b = {(u -  w)/\lVert  u -  w\rVert}$.
    Then $\{a, b\}$ is an orthonormal basis, and
    \[
     u = \sqrt{\tfrac{1}{2}(1 + s)\:\!} \, a + \sqrt{\tfrac{1}{2}(1 - s)\:\!} \, b,
    \qquad
     w = \sqrt{\tfrac{1}{2}(1 + s)\!\:}\,  a - \sqrt{\tfrac{1}{2}(1 - s)\:\!} \, b.
    \]
    We may then express $F$ as
    \[
        F = \tfrac{1}{2}(1+s)  a  a^* + \sqrt{\tfrac{1}{4}(1-s^2)} \bigl[ b  a^* -  a  b^* \bigr]
        - \tfrac{1}{2}(1-s)  b  b^*.
    \]
    Consider an arbitrary unit vector $ x = x_1 a + x_2  b \in \mathbb C^2$.
    We then have
    \[
         x^* F  x = {\tfrac{1}{2}(1+s)\:\!\lvert x_1 \rvert^2} - {\tfrac{1}{2}(1-s)\:\!\lvert x_2 \rvert^2}
    \]
    which is a convex combination of $\tfrac{1}{2}(1+s)$ and $-\tfrac{1}{2}(1-s)$, and attains these two extrema for $ x =  a$ and $ x =  b$ respectively.
    Thus $\lVert E \rVert_{\diam} = \lVert F \rVert_{\diam} = 1$.
\end{proof}


\section{Numerical Diameters of Linear Maps}


One can apply the notion of numerical diameter to the problem of operationally distinguishing quantum states on a given system.
Consider a UCP map $\Psi$, taken as an action on quantum observables.
We may consider an operator $E$ with spectrum $\sigma(E) = \{\pm 1\}$ to represent an observable, which plays a role in a procedure to distinguish some pair of states $q$ and $q'$, essentially playing the role of a linear classifier, with the spectral diameter representing some notion of maximum capacity to distinguish pairs of states.
In this case, $\Psi(E)$ represents an observable with a potentially degraded distinguishing capacity, precisely according to how much smaller the spectral diameter of $\Psi(E)$ is from $E$ itself.
Considering a unital section $\Phi$ of $\Psi$, the degree to which $\Psi$ may diminish the distinguishing capabaility of such observables, is in inverse proportion to how much $\Phi$ may dilate the spectral diameters of its arguments.
This serves as a practical motivation to consider the induced seminorms.

\begin{definition}
Let $\Phi: \mathcal S \to \mathcal S'$ be a linear map between operator systems where $\mathcal S\neq {\mathbb C\cdot 1_\mathcal S}$. The \textbf{numerical diameter} $\lVert \Phi \rVert_{\mathrm{diam}} \in [0,\infty]$ is
\[
  \lVert \Phi \rVert_{\mathrm{diam}}
=
  \sup \,\left\{ \,
    \frac{\lVert \Phi(E) \rVert_{\mathrm{diam}}}{\lVert E \rVert_{\mathrm{diam}}} : 
    E \in \mathcal S \text{ and } E \notin  \mathbb C\cdot 1_{\mathcal S}\right\}
\]
Similarly, the \textbf{self-adjoint numerical diameter} $\|\Phi\|_{\sdiam} \in[0,\infty]$ is
\[
  \lVert \Phi \rVert_{\mathrm{sdiam}}
=
  \sup \,\left\{ \,
    \frac{\lVert \Phi(E) \rVert_{\mathrm{diam}}}{\lVert E \rVert_{\mathrm{diam}}} : 
    E \in \mathcal S_{sa} \text{ and } E \notin  \mathbb C\cdot 1_{\mathcal S}\right\}
\]
\end{definition}

\smallskip
\noindent
In the above definition of $\lVert \Phi \rVert_{\sdiam}$, the restriction of the supremum to $E \in \mathcal S_{sa}$ is significant only if $\Phi$ is not self-adjoint:

\begin{proposition}
    \label{prop:induced-sa-diam-on-sa-maps}%
    For $\Phi: \mathcal S \to \mathcal S'$ a self adjoint map between operator systems, $\lVert \Phi \rVert_{\diam} = \lVert \Phi \rVert_{\sdiam}$\,.
\end{proposition}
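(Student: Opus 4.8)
The plan is to show the nontrivial inequality $\lVert \Phi \rVert_{\diam} \le \lVert \Phi \rVert_{\sdiam}$, since $\lVert \Phi \rVert_{\sdiam} \le \lVert \Phi \rVert_{\diam}$ is immediate from the definitions (the supremum for $\sdiam$ ranges over a subset of the operators used for $\diam$). So fix an arbitrary $E \in \mathcal S$ with $E \notin \mathbb C \cdot 1_{\mathcal S}$; I want to bound $\lVert \Phi(E) \rVert_{\diam}$ by $\lVert \Phi \rVert_{\sdiam}\,\lVert E \rVert_{\diam}$.

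The key tool is Proposition~\ref{prop:diameter-real-projection}, which lets me reduce the numerical diameter of any operator to the numerical diameter of a self-adjoint operator of the form $\Re(cE)$ for a complex unit $c$. Concretely, choose a complex unit $c$ achieving $\lVert \Phi(E) \rVert_{\diam} = \lVert \Re(c\,\Phi(E)) \rVert_{\diam}$. Since $\Phi$ is self-adjoint, it commutes with taking real parts: $\Re(c\,\Phi(E)) = \Phi(\Re(cE))$, because $\Phi(\Re(cE)) = \tfrac12\Phi(cE + c^*E^*) = \tfrac12(c\Phi(E) + c^*\Phi(E)^*) = \Re(c\Phi(E))$, using $\Phi(E^*) = \Phi(E)^*$. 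Now $\Re(cE) \in \mathcal S_{sa}$, so provided $\Re(cE) \notin \mathbb C \cdot 1_{\mathcal S}$ we get
\[
  \lVert \Phi(E) \rVert_{\diam} = \bigl\lVert \Phi(\Re(cE)) \bigr\rVert_{\diam} \le \lVert \Phi \rVert_{\sdiam}\,\bigl\lVert \Re(cE) \bigr\rVert_{\diam} \le \lVert \Phi \rVert_{\sdiam}\,\lVert E \rVert_{\diam},
\]
where the last step is again Proposition~\ref{prop:diameter-real-projection} (the $\diam$-norm of $\Re(cE)$ is at most that of $E$). Taking the supremum over $E$ gives the claim.

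The one point requiring care — and the main obstacle — is the degenerate case where the optimal $c$ forces $\Re(cE) \in \mathbb C \cdot 1_{\mathcal S}$, i.e. $\lVert \Re(cE) \rVert_{\diam} = 0$ by Proposition~\ref{prop:diamseminorm}. In that event the chain above would conclude $\lVert \Phi(E) \rVert_{\diam} = 0$ directly (from $\Phi(\Re(cE)) = \Phi$ of a scalar multiple of the identity, which need not be scalar if $\Phi$ is not unital, but its $\diam$-norm is $0$ by homogeneity since $\lVert \Phi(\alpha 1) \rVert_{\diam} = \lvert\alpha\rvert\,\lVert\Phi(1)\rVert_{\diam}$ and $\lVert\Phi(\Re(cE))\rVert_{\diam} \le \lVert\Phi\rVert_{\sdiam}\cdot 0$ is not available, so instead note $\Re(cE) = \alpha 1$ gives $\lVert\Phi(\Re(cE))\rVert_{\diam} = \lvert\alpha\rvert\lVert\Phi(1)\rVert_{\diam}$). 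Rather than chase this, the cleaner fix is to observe that if $\Re(cE)$ is scalar then, since $E \notin \mathbb C\cdot 1$, the operator $\Im(cE)$ is not scalar, and one can rerun the argument using $\Im$ in place of $\Re$ via the $\Im$-version in Proposition~\ref{prop:diameter-real-projection}; since $c$ was only chosen to realize the maximum, we may instead pick the maximizing $c'$ among $\Re(c'E)$ \emph{and} $\Im(c'E)$ simultaneously, and $E$ not scalar guarantees at least one of these is non-scalar with $\diam$-norm equal to $\lVert E\rVert_{\diam}$. Handling this bookkeeping so that we always land on a genuinely non-scalar self-adjoint operator is the only subtlety; everything else is a direct consequence of Propositions~\ref{prop:diameter-real-projection} and~\ref{prop:diamseminorm}.
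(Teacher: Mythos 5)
Your main chain of inequalities is essentially the paper's own argument: choose a complex unit $c$ with $\lVert\Phi(E)\rVert_{\diam}=\lVert\Re(c\Phi(E))\rVert_{\diam}$, use self-adjointness of $\Phi$ to write $\Re(c\Phi(E))=\Phi(\Re(cE))$, bound by $\lVert\Phi\rVert_{\sdiam}\,\lVert\Re(cE)\rVert_{\diam}$, and finish with $\lVert\Re(cE)\rVert_{\diam}\le\lVert E\rVert_{\diam}$ from Proposition~\ref{prop:diameter-real-projection}. That part is correct, and you are right that the middle step needs $\Re(cE)\notin\mathbb C\cdot 1_{\mathcal S}$ --- a point the paper's proof passes over silently.

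However, the fix you propose for the degenerate case does not work. Switching from $\Re$ to $\Im$ buys nothing: since $\Im(c''F)=\Re(-ic''F)$, a unit $c''$ maximising $\lVert\Im(c''\Phi(E))\rVert_{\diam}$ is $i$ times a unit maximising $\lVert\Re(c\Phi(E))\rVert_{\diam}$, and the resulting self-adjoint operators $\Im(c''E)$ and $\Re(cE)$ then coincide (up to sign). Your observation that $\Re(cE)$ and $\Im(cE)$ cannot both be scalar concerns a single \emph{fixed} $c$; but the $c$ you need is dictated by $\Phi(E)$, not by $E$, so you are not free to re-choose it. Concretely, let $\Phi:M_2(\mathbb C)\to M_2(\mathbb C)$ be $\Phi(A)=a_{22}D$ with $D=\Diag(1,2)$ (self-adjoint, not paraunital), and $E=\Diag(1,0)+iI_2$. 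Then $\Phi(E)=iD$, the maximisers of $\lVert\Re(c\,iD)\rVert_{\diam}=\lvert\Im(c)\rvert\,\lVert D\rVert_{\diam}$ are $c=\pm i$, and $\Re(\pm iE)=\mp I_2$ is scalar; the $\Im$-maximisers are $c''=\pm1$ and give $\pm\Im(E)=\pm I_2$, also scalar. So \emph{every} admissible choice lands on a scalar. The correct resolution is the one you started to sketch and then abandoned: in the degenerate case $\Re(cE)=\alpha 1_{\mathcal S}$ one gets $\lVert\Phi(E)\rVert_{\diam}=\lvert\alpha\rvert\,\lVert\Phi(1_{\mathcal S})\rVert_{\diam}$, and either $\Phi$ is paraunital, so this vanishes and the bound is trivial, or $\Phi$ is not paraunital, in which case $\lVert\Phi\rVert_{\sdiam}=\infty$ (the divergence argument in Theorem~\ref{thm:paraunitalbounds} uses only the self-adjoint test operators $1_{\mathcal S}+\tfrac1n E'$) and again there is nothing to prove; in the example above one checks directly that $\lVert\Phi\rVert_{\sdiam}=\infty$.
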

\begin{proof}
    Let $x \in \mathcal S$ with $\lVert x \rVert_{\diam} \neq 0$, noting that if there is no such $x$ then $\mathcal S = \mathbb C 1_\mathcal S$ and the result is immediate. 
    Now for any $s \in \mathbb C$, we have
    \[
        \Phi\bigl(\Re(sx)\bigr)
        \;=\;
        \Phi\bigl(\tfrac{1}{2}sx + \tfrac{1}{2}s^*x^*\bigr)
        \;=\;
        \tfrac{1}{2}s\Phi(x) + \tfrac{1}{2}s^*\Phi(x)^*
        \;=\;
        \Re\bigl(s\Phi(x)\bigr) \;.
    \]
    Using Proposition~\ref{prop:diameter-real-projection}, there is then $c \in \mathbb C$ with $\lvert c \rvert = 1$ such that $\lVert x \rVert_{\diam} = \lVert \Re(cx)\rVert_{\diam}$, which in particular maximises the value of $\lVert \Re(sx) \rVert_{\diam}$ for $\lvert s \rvert = 1$; and there is $c' \in \mathbb C$ with $\lvert c' \rvert = 1$ such that
    \[
    \begin{aligned}[b]
        \bigl\lVert \Phi(x) \bigr\rVert_{\diam}
    \;=\;
        \bigl\lVert \Re\bigl(c'\Phi(x)\bigr) \bigr\rVert_{\diam}
    \;&=\;
        \bigl\lVert \Phi\bigl(\Re(c'x)\bigr) \bigr\rVert_{\diam}
    \\&\le\;
        \lVert \Phi \rVert_{\sdiam}
        \cdot
        \bigl\lVert \Re(c'x) \bigr\rVert_{\diam}
    \\&\le\;
        \lVert \Phi \rVert_{\sdiam}
        \cdot
        \bigl\lVert \Re(cx) \bigr\rVert_{\diam}
    \;=\;
        \lVert \Phi \rVert_{\sdiam}
        \cdot
        \lVert x \rVert_{\diam} \;.
    \end{aligned}
    \]    
    Thus $\lVert \Phi \rVert_{\diam} \le \lVert \Phi \rVert_{\sdiam}$ for such a map $\Phi$, with the other inequality following from the definitions of these quantities for arbitrary maps.
\end{proof}

The results below show that the induced numerical diameter (whether restricted to self-adjoint arguments or not) tells us interesting things about linear maps.

There is a natural dichotomy between those maps with finite and infinite numerical diameter. This depends on what happens to the unit.
For the sake of brevity, we introduce the following terminology:
\begin{definition}
  We say that a map $\Phi: \mathcal S \to \mathcal S'$ between operator systems is \textbf{paraunital} if there is some $c \in \mathbb C$ for which $\Phi(1_{\mathcal S}) = c 1_{\mathcal S'}$.  
\end{definition}

\begin{theorem}\label{thm:paraunitalbounds}
  Let $\Phi: \mathcal S \to \mathcal S'$ be a bounded linear map between operator systems such that $\mathcal S\neq \mathbb C\cdot 1_{\mathcal S}$. The following are equivalent:
  \begin{enumerate}
      \item $\Phi$ is paraunital;
      \item $\lVert \Phi \rVert_{\mathrm{diam}} < \infty$;
      \item $\lVert \Phi \rVert_{\mathrm{sdiam}} < \infty$.
  \end{enumerate}
Furthermore, if $\Phi$ is paraunital, then $\|\Phi\|_{\diam} \leqslant 2\|\Phi\|$ and $\|\Phi\|_{\sdiam} \leqslant \|\Phi\|$.
\end{theorem}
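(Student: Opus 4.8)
The plan is to prove the chain of implications $(1)\Rightarrow(2)\Rightarrow(3)\Rightarrow(1)$, and then derive the two quantitative bounds as a byproduct of the argument for $(1)\Rightarrow(2)$.

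For $(1)\Rightarrow(2)$, suppose $\Phi(1_{\mathcal S}) = c\,1_{\mathcal S'}$. Given any $E \in \mathcal S$, write $\Phi(E)$ and recall from Proposition~\ref{prop:diamseminorm} (and its corollary) that the numerical diameter is translation-invariant under scalar multiples of the identity. The key observation is that $\lVert \Phi(E) \rVert_{\diam}$ depends only on the image of $E$ modulo $\mathbb C\cdot 1_{\mathcal S}$, since $\Phi(E + \lambda 1_{\mathcal S}) = \Phi(E) + \lambda c\, 1_{\mathcal S'}$ has the same numerical diameter as $\Phi(E)$. So for estimating the ratio $\lVert \Phi(E)\rVert_{\diam}/\lVert E \rVert_{\diam}$ we are free to replace $E$ by any translate $E - \lambda 1_{\mathcal S}$, which leaves the denominator unchanged as well. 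Using Proposition~\ref{prop:diamcontinuous}, $\lVert \Phi(E) \rVert_{\diam} \le 2\lVert \Phi(E)\rVert \le 2\lVert \Phi \rVert\, \lVert E - \lambda 1_{\mathcal S}\rVert$ for every $\lambda$, so we should choose $\lambda$ to make $\lVert E - \lambda 1_{\mathcal S}\rVert$ as small as possible relative to $\lVert E \rVert_{\diam}$. This is exactly what Proposition~\ref{prop:diameterlowerbounds}(3) provides: there is a scalar $\lambda = k_{\Re E} + i\,k_{\Im E}$ with $\lVert E - \lambda 1_{\mathcal S}\rVert \le \lVert E \rVert_{\diam}$. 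Hence $\lVert \Phi(E)\rVert_{\diam} \le 2\lVert\Phi\rVert\,\lVert E\rVert_{\diam}$, giving $\lVert \Phi\rVert_{\diam} \le 2\lVert\Phi\rVert < \infty$. For the self-adjoint refinement, if $E$ is self-adjoint then $\Phi(E)$ need not be, but $\lVert \Phi(E)\rVert_{\diam} \le 2r(\Phi(E))$; better, one can use $\lVert \Phi(E)\rVert_{\diam} \le 2\lVert \Phi(E) - \mu 1\rVert$ for any $\mu$ — actually the cleanest route is: for self-adjoint $E$, Proposition~\ref{prop:diameterlowerbounds}(2) gives a \emph{real} scalar $k_E$ with $\lVert E - k_E 1_{\mathcal S}\rVert = \tfrac12 \lVert E\rVert_{\diam}$, so $\lVert\Phi(E)\rVert_{\diam} \le 2\lVert\Phi(E - k_E 1_{\mathcal S})\rVert \le 2\lVert\Phi\rVert\cdot\tfrac12\lVert E\rVert_{\diam} = \lVert\Phi\rVert\,\lVert E\rVert_{\diam}$, whence $\lVert\Phi\rVert_{\sdiam} \le \lVert\Phi\rVert$.

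The implication $(2)\Rightarrow(3)$ is immediate from the definitions, since the supremum defining $\lVert\Phi\rVert_{\sdiam}$ is taken over a subset ($\mathcal S_{sa}\setminus\mathbb C 1_{\mathcal S}$) of the index set for $\lVert\Phi\rVert_{\diam}$; one only needs to check this subset is nonempty, which holds because $\mathcal S$ is self-adjoint and $\mathcal S\neq\mathbb C 1_{\mathcal S}$ (if $x\in\mathcal S$ has $\lVert x\rVert_{\diam}\neq 0$, then at least one of $\Re x, \Im x$ does too).

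The main work is the contrapositive of $(3)\Rightarrow(1)$: if $\Phi$ is \emph{not} paraunital, we must exhibit a sequence of self-adjoint $E_n \in \mathcal S$, none a scalar multiple of $1_{\mathcal S}$, with $\lVert \Phi(E_n)\rVert_{\diam}/\lVert E_n\rVert_{\diam} \to \infty$. The idea: since $\Phi(1_{\mathcal S})$ is not a scalar multiple of $1_{\mathcal S'}$, it has $\lVert \Phi(1_{\mathcal S})\rVert_{\diam} = \delta > 0$ by Proposition~\ref{prop:diamseminorm}. Now take $E_n = 1_{\mathcal S} + \tfrac1n G$ for any fixed self-adjoint $G \in \mathcal S$ with $G \notin \mathbb C 1_{\mathcal S}$ (such $G$ exists as argued above). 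Then $E_n$ is self-adjoint, not a scalar multiple of $1_{\mathcal S}$, and $\lVert E_n\rVert_{\diam} = \tfrac1n\lVert G\rVert_{\diam} \to 0$ by homogeneity and translation-invariance. Meanwhile $\Phi(E_n) = \Phi(1_{\mathcal S}) + \tfrac1n\Phi(G)$, so by the triangle inequality for the seminorm, $\lVert\Phi(E_n)\rVert_{\diam} \ge \lVert\Phi(1_{\mathcal S})\rVert_{\diam} - \tfrac1n\lVert\Phi(G)\rVert_{\diam} \ge \delta - \tfrac1n\lVert\Phi(G)\rVert_{\diam} \to \delta > 0$. Hence the ratio tends to $+\infty$, so $\lVert\Phi\rVert_{\sdiam} = \infty$, completing the contrapositive and closing the cycle. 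The only subtlety to be careful about is the existence of a self-adjoint $G\in\mathcal S$ that is not a scalar multiple of the identity — this uses that $\mathcal S$ is spanned by its self-adjoint part together with the observation that if every self-adjoint element of $\mathcal S$ were a scalar multiple of $1_{\mathcal S}$ then $\mathcal S = \mathbb C 1_{\mathcal S}$, contradicting the hypothesis — and making sure all the $E_n$ genuinely lie outside $\mathbb C 1_{\mathcal S}$, which is clear since $G$ does and $n<\infty$.
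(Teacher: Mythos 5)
Your proposal is correct and follows essentially the same route as the paper: the upper bounds $\lVert\Phi\rVert_{\diam}\le 2\lVert\Phi\rVert$ and $\lVert\Phi\rVert_{\sdiam}\le\lVert\Phi\rVert$ via translation by the scalar from Proposition~\ref{prop:diameterlowerbounds} combined with Proposition~\ref{prop:diamcontinuous}, and the divergence for non-paraunital maps via the sequence $1_{\mathcal S}+\tfrac1n G$ with $G$ self-adjoint and not a multiple of the identity. Organizing it as the cycle $(1)\Rightarrow(2)\Rightarrow(3)\Rightarrow(1)$ rather than two separate directions is only a cosmetic difference, and your observation that one need not insist on $\lVert\Phi(G)\rVert_{\diam}>0$ slightly streamlines the paper's choice of test element.
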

\begin{proof}
  Suppose $\Phi$ is not paraunital, that is $\|\Phi(1_{\mathcal S})\|_{\diam} > 0$. Let $E \in \mathcal S_{sa}$ be independent of $1_{\mathcal S}$, for which $\lVert \Phi(E) \rVert_{\mathrm{diam}} > 0$. Note that if no such $E$ existed, there would exist $c,d\in \mathbb C$ such that
  \[
  \Phi(1_\mathcal S) = \Phi(E) + \Phi(1_\mathcal S - E) = c1_{\mathcal S'} + d1_{\mathcal S'},
  \]
  a contradiction.
  Let $\alpha = \| \Phi(E) \|_{\diam} \big/ \| \Phi(1_{\mathcal S}) \|_{\diam} > 0$.
  Then
  \begin{allowdisplaybreaks}%
  \begin{align*}
  \|\Phi\|_{\diam} \geqslant \|\Phi\|_{\sdiam} \ & \ \geqslant
     \lim_{n\rightarrow \infty} \frac{\|\Phi(1_{\mathcal S} + \frac{1}{n}E)\|_{\diam}}{\|1_\mathcal S + \frac{1}{n}E\|_{\diam}}
     \\[2ex] & \ = \lim_{n\rightarrow \infty} \frac{\|\Phi(1_{\mathcal S}) + \frac{1}{n}\Phi(E)\|_{\diam}}{\frac{1}{n}\|E\|_{\diam}}
     \\[1ex] & \ \geqslant \lim_{n\rightarrow \infty} \frac{\Big|\|\Phi(1_{\mathcal S})\|_{\diam} - \frac{1}{n}\|\Phi(E)\|_{\diam}\Big|}{\frac{1}{n}\|E\|_{\diam}}
     \\ & \ = \lim_{n\rightarrow \infty} \frac{\Bigl| (n - \alpha) \, \| \Phi(1_{\mathcal S})\|_{\diam} \Bigr|}{\|E\|_{\diam}}
     \ = \lim_{n\rightarrow \infty} \, \lvert n - \alpha \rvert 
     \, \frac{\|\Phi(1_{\mathcal S})\|_{\diam}}{\|E\|_{\diam}} \ = \ \infty,
  \end{align*}%
  \end{allowdisplaybreaks}%
  where the the second inequality follows from the properties of the seminorm.

Conversely, 
  let $\Phi(1_{\mathcal S}) = {c 1_{\mathcal S'}}$ for $c\in\mathbb C$. Suppose $E\in \mathcal S$ with $\|E\|_{\diam} = 1$. 
  By Proposition \ref{prop:diameterlowerbounds} there exist $k_{\!\:\Re E},k_{\!\:\Im E}\in\mathbb R$ such that 
  \[
  \|E - (k_{\!\:\Re E} + i\, k_{\!\:\Im E})1_{\mathcal S}\| \leqslant \|E\|_{\diam} = 1\, .
  \]
  
  Hence, by letting $k = k_{\!\:\Re E} + i\, k_{\!\:\Im E}$ 
  \begin{align*}
      \frac{\|\Phi(E)\|_{\diam}}{\|E\|_{\diam}}
        \ & \ = \|\Phi(E)\|_{\diam}
      \\ \ & \ = \|\Phi(E) - ck1_{\mathcal S'}\|_{\diam}
      \\[1ex] & \ = \|\Phi(E - k1_{\mathcal S})\|_{\diam}
      \\[1ex] & \ \leqslant 2\|\Phi(E - k1_{\mathcal S})\|
      \\[1ex] & \ \leqslant 2\|\Phi\|\|E-k1_{\mathcal S}\| \leqslant 2\|\Phi\|.
  \end{align*}
 Therefore, if $\Phi$ is paraunital then $\|\Phi\|_{\diam} \leqslant 2\|\Phi\| < \infty$.
 
 Lastly, if $E$ is self-adjoint then Proposition \ref{prop:diameterlowerbounds} gives that there exists $k = k_E\in \mathbb R$ such that $\|E-k 1_{\mathcal S}\| = \frac{1}{2}\|E\|_{\diam} = \frac{1}{2}$. Inserting this into the above sequence of inequalities yields that if $\Phi$ is paraunital then $\|\Phi\|_{\sdiam} \leqslant \|\Phi\| < \infty$.
 \end{proof}

The following example shows that the numerical diameter can indeed be strictly bigger than the norm. 

\begin{example}\label{example:diambound}
Let $\Phi : M_2(\mathbb C) \rightarrow M_2(\mathbb C)$ be defined as
\[
\Phi\left(\left[\begin{matrix} a & b \\ c & d\end{matrix}\right]\right) \ = \ \left[\begin{matrix} \frac{1}{\sqrt 2}(a+b) & 0 \\ 0 & \frac{1}{\sqrt 2}(d-b)\end{matrix}\right]\, .
\]
\begin{itemize}
\item
If\, $\left\|\left[\begin{matrix} a & b \\ c & d\end{matrix}\right]\right\| = 1$, then 
\[
\left| \tfrac{1}{\sqrt 2}(a+b)\right|^2 \leqslant \frac{1}{2}\bigl(|a|^2 + |b|^2 + 2\bigl|\:\!\Re(\bar a b)\:\!\bigr|\bigr) \leqslant 1.
\]

\medskip\noindent
With a similar calculation, we may see that $\smash{\left\|\Phi\text{$\small\left(\left[\begin{matrix} a & b \\ c & d\end{matrix}\right]\right)$}\right\| \leqslant 1}$, with equality for the matrix \text{\footnotesize$\left[\begin{matrix} 1 & i \\ 0 & 0\end{matrix}\right]$}.

\item
    By contrast: by Lemma~\ref{lemma:2x2-matrix-unit-numerical-diam}, we have $ \left\|\left[\text{\small$\begin{matrix} 0 & 1 \\ 0 & 0\end{matrix}$}\right]\right\|_{\diam} =\; 1\,$, and
\[
    \left\|\Phi\left(\left[
        \begin{matrix} 0 & 1 \\ 0  & 0\end{matrix}
    \right]\right)\right\|_{\diam}
    =\;\; 
    \left\|\left[
        \begin{matrix} \frac{1}{\sqrt 2} & 0 \\ 0 & -\frac{1}{\sqrt 2}\end{matrix}
    \right]\right\|_{\diam} =\; \sqrt 2\,.
\]
\end{itemize}
Therefore, $\Phi$ is a paraunital linear map with $\|\Phi\| = 1 < \sqrt 2 \leqslant \|\Phi\|_{\diam}$.
\end{example}

\noindent
The theorem above singles out the paraunital maps as the ones to which the numerical diameter is a property of interest.
In particular,

\begin{proposition}\label{prop:seminorm}
  Both $\|\cdot\|_{\diam}$ and $\|\cdot\|_{\sdiam}$ are seminorms on paraunital maps between two fixed operator systems, but are not norms. 
  In particular: for $\Phi:\mathcal S \rightarrow \mathcal S'$ we have $\|\Phi\|_{\diam} = 0$ if and only if there exists a linear functional $\varphi:\mathcal S\rightarrow \mathbb C$ such that $\Phi = \varphi\cdot 1_{\mathcal S'}$.
  Furthermore, $\|\Phi\|_{\sdiam} = 0$ if and only if $\|\Phi\|_{\diam} = 0$.
\end{proposition}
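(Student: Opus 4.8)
The plan is to check the seminorm axioms directly from the corresponding facts about the operator numerical diameter, then to prove the kernel characterization, and finally to deduce the statement about $\|\cdot\|_{\sdiam}$ from it.

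First I would note that the (bounded) paraunital maps $\mathcal S \to \mathcal S'$ form a vector space: if $\Phi(1_\mathcal S) = c\,1_{\mathcal S'}$ and $\Psi(1_\mathcal S) = d\,1_{\mathcal S'}$, then $(\alpha\Phi + \beta\Psi)(1_\mathcal S) = (\alpha c + \beta d)\,1_{\mathcal S'}$, and by Theorem~\ref{thm:paraunitalbounds} both $\|\cdot\|_{\diam}$ and $\|\cdot\|_{\sdiam}$ take finite values on this space. Homogeneity and subadditivity are then immediate from Proposition~\ref{prop:diamseminorm}: for any $E \in \mathcal S$ with $E\notin\mathbb C\,1_\mathcal S$ one has $\|(\alpha\Phi)(E)\|_{\diam} = |\alpha|\,\|\Phi(E)\|_{\diam}$ and $\|(\Phi+\Psi)(E)\|_{\diam} \le \|\Phi(E)\|_{\diam} + \|\Psi(E)\|_{\diam}$; dividing by $\|E\|_{\diam}$ and taking the supremum over such $E$ gives $\|\alpha\Phi\|_{\diam} = |\alpha|\,\|\Phi\|_{\diam}$ and $\|\Phi+\Psi\|_{\diam} \le \|\Phi\|_{\diam} + \|\Psi\|_{\diam}$, and the same computation restricted to $E \in \mathcal S_{sa}$ handles $\|\cdot\|_{\sdiam}$.

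For the kernel characterization, the implication from right to left is easy: if $\Phi = \varphi\cdot 1_{\mathcal S'}$ for a linear functional $\varphi$, then $\Phi(E) = \varphi(E)\,1_{\mathcal S'}$ is a scalar multiple of the identity for every $E$, so $\|\Phi(E)\|_{\diam} = 0$ by Proposition~\ref{prop:diamseminorm} and hence $\|\Phi\|_{\diam} = 0$; since such a $\Phi$ can be nonzero, this also shows that $\|\cdot\|_{\diam}$, and a fortiori $\|\cdot\|_{\sdiam}$ which it dominates, is not a norm. Conversely, assume $\|\Phi\|_{\diam} = 0$. For $E \in \mathcal S$ with $E \notin \mathbb C\,1_\mathcal S$ we then have $\|\Phi(E)\|_{\diam} = 0$, so $\Phi(E) \in \mathbb C\,1_{\mathcal S'}$ by Proposition~\ref{prop:diamseminorm}; for $E = c\,1_\mathcal S$ paraunitality gives $\Phi(E) = c\,\Phi(1_\mathcal S) \in \mathbb C\,1_{\mathcal S'}$. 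Thus $\Phi(\mathcal S) \subseteq \mathbb C\,1_{\mathcal S'}$, and because $1_{\mathcal S'}\neq 0$ we may define $\varphi(E)$ to be the unique scalar with $\Phi(E) = \varphi(E)\,1_{\mathcal S'}$; linearity of $\varphi$ is inherited from that of $\Phi$, so $\Phi = \varphi\cdot 1_{\mathcal S'}$.

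Finally, for the equivalence $\|\Phi\|_{\sdiam} = 0$ if and only if $\|\Phi\|_{\diam} = 0$: one direction is clear since $\|\Phi\|_{\sdiam} \le \|\Phi\|_{\diam}$ always, the supremum defining the former ranging over a subset of the arguments used for the latter. For the other, suppose $\|\Phi\|_{\sdiam} = 0$; splitting again into the case $E \in \mathbb C\,1_\mathcal S$, handled by paraunitality, and $E \in \mathcal S_{sa}$ with $E \notin \mathbb C\,1_\mathcal S$, handled by $\|\Phi\|_{\sdiam} = 0$ together with Proposition~\ref{prop:diamseminorm}, yields $\Phi(\mathcal S_{sa}) \subseteq \mathbb C\,1_{\mathcal S'}$; writing an arbitrary $E \in \mathcal S$ as $\Re(E) + i\,\Im(E)$ with $\Re(E),\Im(E) \in \mathcal S_{sa}$ then gives $\Phi(\mathcal S) \subseteq \mathbb C\,1_{\mathcal S'}$, so $\Phi = \varphi\cdot 1_{\mathcal S'}$ and $\|\Phi\|_{\diam} = 0$ by the first part. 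I expect no serious obstacle; the one point to be careful about throughout is the uniform treatment of the degenerate arguments $E \in \mathbb C\,1_\mathcal S$, where the quotient $\|\Phi(E)\|_{\diam}/\|E\|_{\diam}$ is undefined and one must invoke paraunitality directly — this is exactly where that hypothesis is used.
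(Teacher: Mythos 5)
Your proposal is correct and follows essentially the same route as the paper: seminorm axioms inherited pointwise from the operator-level numerical diameter, the kernel characterization via Proposition~\ref{prop:diamseminorm}, and the $\sdiam$/$\diam$ equivalence via the decomposition $E = \Re(E) + i\,\Im(E)$ (the paper applies subadditivity of $\lVert\cdot\rVert_{\diam}$ to $\Phi(E)$ directly, whereas you route through the image containment $\Phi(\mathcal S_{sa})\subseteq\mathbb C\,1_{\mathcal S'}$, but this is the same idea). Your explicit handling of the degenerate arguments $E\in\mathbb C\,1_{\mathcal S}$ via paraunitality is a welcome touch of care that the paper leaves implicit.
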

\begin{proof}
  The numerical diameter is evidently real-valued, positive semidefinite, and homogeneous.
  It also satisfies the triangle inequality:
  \begin{align*}
  \begin{aligned}[b]
    \lVert \Phi + \Psi \rVert_{\mathrm{diam}}
    \;&=
    \sup_{E \in \mathcal S \setminus \mathbb C\cdot 1_{\mathcal S}} \frac{\lVert \Phi(E) + \Psi(E) \rVert_{\mathrm{diam}}}{\lVert E \rVert_{\mathrm{diam}}}
  \\[1ex]&\leqslant
    \sup_{E \in \mathcal S \setminus \mathbb C\cdot 1_{\mathcal S}}
    \frac{\lVert \Phi(E) \rVert_{\mathrm{diam}}}{\lVert E \rVert_{\mathrm{diam}}}
    \;+
    \sup_{F \in \mathcal S \setminus \mathbb C\cdot 1_{\mathcal S}}
    \frac{\lVert \Psi(F) \rVert_{\mathrm{diam}}}{\lVert F \rVert_{\mathrm{diam}}}
  \\[1ex]&=\;
    \lVert \Phi \rVert_{\mathrm{diam}} + \lVert\Psi\rVert_{\mathrm{diam}}\,.
  \end{aligned}
  \end{align*}
The restriction to the self-adjoint operators is a seminorm for exactly the same reasons.
 
Now, suppose $\Phi = \varphi\cdot 1_{\mathcal S'}$ for some linear functional $\varphi$. This then implies that 
\[
\|\Phi(E)\|_{\diam} \,=\, \|\varphi(E) \cdot 1_{\mathcal S'}\|_{\diam} \,=\, |\varphi(E)| \cdot \|1_{\mathcal S'}\|_{\diam} \,=\, 0.
\]
That is $\|\Phi\|_{\diam} = 0$. Conversely, suppose $\|\Phi\|_{\diam}=0$. For every $E\in\mathcal S$ one has $\|\Phi(E)\|_{\diam}=0$ and so by Proposition \ref{prop:diamseminorm} there exists a scalar $c_E\in\mathbb C$ such that $\Phi(E) = c_E 1_{\mathcal S'}$. It is straightforward to conclude that $\varphi:\mathcal S\rightarrow \mathbb C$ defined by $\varphi(E) = c_E$ is the required linear functional.

Lastly, suppose $\mathcal S$ is an operator system.
Suppose $\|\Phi\|_{\sdiam} = 0$. Then for any $E\in\mathcal S$, we have
\begin{equation*}
    \bigl\lVert \Phi(E) \bigr\rVert_{\diam}
    \;\leqslant\;
    \bigl\lVert \Phi\bigl(\!\:\Re(E)\bigr)\bigr\rVert_{\diam} +\;
    \bigl\lVert \Phi\bigl(\!\:\Im(E)\bigr)\bigr\rVert_{\diam}
    \;=\; 0 
\end{equation*}
since $\Re(E), \Im(E) \in \mathcal S_{sa}$.
The converse follows from the definitions.
\end{proof}


\begin{lemma}\label{lem:submultiplicative}
The numerical diameter is submultiplicative with respect to composition of paraunital maps. In particular, if $\Phi:\mathcal S \rightarrow \mathcal S'$ and $\Psi:\mathcal S' \rightarrow \mathcal S''$ are linear paraunital maps between operator systems then
\[
\|\Psi\circ\Phi\|_{\diam} \leqslant \|\Psi\|_{\diam}\|\Phi\|_{\diam}.
\]
Moreover, The self-adjoint numerical diameter is submultiplicative with respect to composition of paraunital self-adjoint maps between operator systems.
\end{lemma}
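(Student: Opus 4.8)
The plan is to mimic the standard proof that an operator norm is submultiplicative, the only subtlety being the elements invisible to the numerical diameter, namely the scalar multiples of the identity. First I would record the relevant closure property: the composition of paraunital maps is again paraunital, since $\Phi(1_{\mathcal S}) = c\,1_{\mathcal S'}$ and $\Psi(1_{\mathcal S'}) = d\,1_{\mathcal S''}$ force $(\Psi\circ\Phi)(1_{\mathcal S}) = cd\,1_{\mathcal S''}$. By Theorem~\ref{thm:paraunitalbounds} this makes all three of $\|\Phi\|_{\diam}$, $\|\Psi\|_{\diam}$, and $\|\Psi\circ\Phi\|_{\diam}$ finite, so the asserted inequality is a genuine inequality of real numbers.

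The second step is to note that the defining estimate $\|\Psi(F)\|_{\diam} \le \|\Psi\|_{\diam}\,\|F\|_{\diam}$ holds for \emph{every} $F\in\mathcal S'$, not merely for the non-scalar ones appearing in the supremum: if $F = c\,1_{\mathcal S'}$ then $\|F\|_{\diam}=0$ by Proposition~\ref{prop:diamseminorm}, while $\Psi(F) = cd\,1_{\mathcal S''}$ by paraunitality of $\Psi$, so both sides vanish. With this in hand, the core of the argument is the two-step chain
\[
\|(\Psi\circ\Phi)(E)\|_{\diam} \;\le\; \|\Psi\|_{\diam}\,\|\Phi(E)\|_{\diam} \;\le\; \|\Psi\|_{\diam}\,\|\Phi\|_{\diam}\,\|E\|_{\diam},
\]
valid for all $E\in\mathcal S$ (apply the extended estimate for $\Psi$ to $F=\Phi(E)$, then the one for $\Phi$). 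Dividing by $\|E\|_{\diam}$ over $E\notin\mathbb C\cdot 1_{\mathcal S}$ and taking the supremum yields $\|\Psi\circ\Phi\|_{\diam}\le\|\Psi\|_{\diam}\|\Phi\|_{\diam}$.

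For the self-adjoint statement I would run exactly the same argument restricted to $E\in\mathcal S_{sa}$, using the extra observation that a self-adjoint $\Phi$ satisfies $\Phi(\mathcal S_{sa})\subseteq\mathcal S'_{sa}$ (and that $\Psi\circ\Phi$ is then self-adjoint): the intermediate operator $\Phi(E)$ is itself self-adjoint, so one may legitimately invoke $\|\Psi(\Phi(E))\|_{\diam}\le\|\Psi\|_{\sdiam}\|\Phi(E)\|_{\diam}$ and then $\|\Phi(E)\|_{\diam}\le\|\Phi\|_{\sdiam}\|E\|_{\diam}$. I do not anticipate a real obstacle; the one point requiring care is the bookkeeping around scalar multiples of $1$, where the ratios defining the induced seminorms are undefined — and this is precisely where paraunitality of the \emph{outer} map $\Psi$ (rather than of $\Phi$) is what is needed.
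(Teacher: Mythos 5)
Your argument is correct and follows essentially the same route as the paper: the only real content is handling the degenerate case where $\Phi(E)$ lands in $\mathbb C\cdot 1_{\mathcal S'}$, which both you and the authors dispose of by using paraunitality of $\Psi$ to see that $\Psi(c\,1_{\mathcal S'}) = cd\,1_{\mathcal S''}$ has zero numerical diameter (the paper phrases this contrapositively, you phrase it as extending the defining estimate to scalar arguments, but these are the same observation). The self-adjoint case is likewise handled identically in both.
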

\begin{proof}
If $\|\Psi\circ\Phi\|_{\diam} >0$ then there exists $E\in \mathcal S\setminus \mathbb C\cdot 1_{\mathcal S}$ such that $\|\Psi\circ\Phi(E)\|_{\diam} > 0$. If it was the case that $\|\Phi(E)\|_{\diam}=0$ then there exists $c\in\mathbb C$ such that $\Phi(E) = c1_{\mathcal S'}$, which gives that for some $d\in\mathbb C$
\[
\|\Psi\circ\Phi(E)\|_{\diam} = \|\Psi(c1_{\mathcal S'})\|_{\diam}
= \|cd1_{\mathcal S''}\|_{\diam} = 0,
\]
a contradiction. Thus, $\|\Phi(E)\|_{\diam}>0$ and 
\[
\frac{\|\Psi\circ\Phi(E)\|_{\diam}}{\|E\|_{\diam}} \  =\ \frac{\|\Psi\circ\Phi(E)\|_{\diam}}{\|\Phi(E)\|_{\diam}}\:\frac{\|\Phi(E)\|_{\diam}}{\|E\|_{\diam}}
\ \leqslant \ \|\Psi\|_{\diam}\|\Phi\|_{\diam}.
\]
The self-adjoint numerical diameter argument follows identically for self-adjoint maps.
\end{proof}

One can extend these results to some cases of non-paraunital maps but there will be problems when composing 0 and $\infty$ numerical diameter maps.

In the application of numerical diameters of linear maps, to the question of procedures to distinguish pairs of quantum states, we should consider the possibility that $\Phi$ may represent a section of the evolution on only one part, of a composite quantum system on which the observable $E$ is supported.
We might then ask whether the numerical diameter is  stable under tensor product with identity maps: that is, for unital $\Phi$, whether $\lVert \Phi \ox \id_{n} \lVert_{\diam} \,= \lVert \Phi \rVert_{\diam}$ for any finite-dimensional Hilbert space $\mathbb C^n$. This framework has been used to study super-operators and the Schatten p-norms \cite{Watrous-2005}.
In analogy to the completely bounded norm, we may define:
\begin{definition}
  \label{def:cbSpectralDiam}
  For a paraunital map $\Phi: \mathcal S \to \mathcal S'$ between operator systems, the \textbf{completely bounded numerical diameter} of $\Phi$ is given by 
  \[
  \lVert \Phi \rVert_{\cbdiam} = \sup_{n} \,\lVert \Phi \ox \id_{n} \rVert_{\diam}\,,
  \]
   taking the supremum over all finite-dimensional Hilbert spaces $\mathbb C^n$. Similarly, the \textbf{completely bounded self-adjoint numerical diameter} is defined to be  
   \[
   \|\Phi\|_{\cbsdiam} = \sup_{n} \,\lVert \Phi \ox \id_{n} \rVert_{\sdiam}.
   \]
\end{definition}

\begin{theorem}\label{thm:cbnorms}
    For paraunital linear maps between two fixed operator systems, $\lVert \cdot \rVert_{\cbsdiam}$ and $\lVert \cdot \rVert_{\cbdiam}$ are norms.
\end{theorem}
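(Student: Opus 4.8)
The plan is to show that both $\lVert \cdot \rVert_{\cbdiam}$ and $\lVert \cdot \rVert_{\cbsdiam}$ are well-defined finite-valued functions on paraunital maps, and that positive definiteness is the only property not already inherited from the seminorm structure established earlier. First I would observe that homogeneity, positive semidefiniteness, and the triangle inequality pass to the supremum over $n$ in a routine way: for each $n$, the map $\Phi \ox \id_n$ is paraunital (since $(\Phi \ox \id_n)(1_{\mathcal S} \ox 1_n) = c\, 1_{\mathcal S'} \ox 1_n$), so Proposition~\ref{prop:seminorm} applies to give $\lVert \cdot \rVert_{\diam}$ as a seminorm at each level $n$, and taking $\sup_n$ of a family of seminorms that satisfy a common triangle inequality again yields a seminorm (possibly $+\infty$-valued a priori). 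The self-adjoint version is identical.

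The one genuinely new thing to check is that the supremum is finite and that the resulting seminorm is a norm. For finiteness, I would invoke Theorem~\ref{thm:paraunitalbounds}: since $\Phi \ox \id_n$ is paraunital, $\lVert \Phi \ox \id_n \rVert_{\diam} \le 2\lVert \Phi \ox \id_n \rVert = 2\lVert \Phi \ox \id_n \rVert$, and the standard fact $\sup_n \lVert \Phi \ox \id_n \rVert = \lVert \Phi \rVert_{\cb} < \infty$ for a bounded map on operator systems gives $\lVert \Phi \rVert_{\cbdiam} \le 2\lVert \Phi \rVert_{\cb} < \infty$ (and $\lVert \Phi \rVert_{\cbsdiam} \le \lVert \Phi \rVert_{\cb}$ via the $\sdiam \le \lVert \cdot \rVert$ bound in the same theorem). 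For positive definiteness: suppose $\lVert \Phi \rVert_{\cbdiam} = 0$; then in particular $\lVert \Phi \rVert_{\diam} = \lVert \Phi \ox \id_1 \rVert_{\diam} = 0$, so by Proposition~\ref{prop:seminorm} there is a linear functional $\varphi:\mathcal S \to \mathbb C$ with $\Phi = \varphi \cdot 1_{\mathcal S'}$. But such a map is not the zero map unless $\varphi = 0$, hence $\Phi = 0$; conversely if $\Phi \ne 0$ we need $\lVert \Phi \rVert_{\cbdiam} > 0$, which is exactly the contrapositive — so the claim is that the only paraunital $\Phi$ of the form $\varphi \cdot 1_{\mathcal S'}$ with $\lVert \Phi \rVert_{\cbdiam}=0$ forced to be zero is when $\varphi = 0$. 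This is where the completely bounded version genuinely improves on $\lVert \cdot \rVert_{\diam}$: I would show that if $\varphi \ne 0$ then already at level $n = 2$ the map $\varphi \cdot 1_{\mathcal S'} \ox \id_2$ has nonzero numerical diameter, because $\varphi$ nonzero means there is $E \in \mathcal S$ with $\varphi(E) = 1$ (say), and then $(\varphi \cdot 1_{\mathcal S'} \ox \id_2)\!\left(\begin{smatrix} 0 & E \\ 0 & 0 \end{smatrix}\right) = 1_{\mathcal S'} \ox \left(\begin{smatrix} 0 & 1 \\ 0 & 0 \end{smatrix}\right)$, whose numerical diameter is $1$ by Lemma~\ref{lemma:2x2-matrix-unit-numerical-diam} (note $1_{\mathcal S'} \ox (\cdot)$ embeds completely isometrically, so the numerical range is preserved). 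Hence $\lVert \Phi \rVert_{\cbdiam} \ge 1 > 0$, contradiction; so $\lVert \Phi \rVert_{\cbdiam} = 0$ forces $\varphi = 0$ and thus $\Phi = 0$.

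For the self-adjoint version, the same argument works: if $\lVert \Phi \rVert_{\cbsdiam} = 0$ then $\lVert \Phi \rVert_{\diam} = 0$ by the last clause of Proposition~\ref{prop:seminorm} (which states $\lVert \Phi \rVert_{\sdiam} = 0 \iff \lVert \Phi \rVert_{\diam} = 0$), so again $\Phi = \varphi \cdot 1_{\mathcal S'}$ and the level-$2$ argument applies verbatim — though one must take $E$ self-adjoint, which is fine since $\varphi$ restricted to $\mathcal S_{sa}$ is still nonzero when $\varphi \ne 0$ (because $\mathcal S = \mathcal S_{sa} + i\mathcal S_{sa}$), so we can choose $E \in \mathcal S_{sa}$ with $\varphi(E) \ne 0$ and rescale.

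The main obstacle I anticipate is bookkeeping about which level of the supremum witnesses positive definiteness and making sure the "obvious" facts — that $\Phi \ox \id_n$ is paraunital, that $1_{\mathcal S'} \ox (\cdot)$ preserves the numerical range, and that $\sup_n \lVert \Phi \ox \id_n \rVert < \infty$ for bounded maps between operator systems — are either cited or sufficiently immediate; none of these is deep, but the finiteness claim deserves an explicit reference to the theory of the completely bounded norm on operator systems.
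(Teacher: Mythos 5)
Your treatment of the seminorm properties and of positive definiteness for $\lVert \cdot \rVert_{\cbdiam}$ is correct, and definiteness is indeed the only point the paper itself regards as non-routine; your route differs in detail. The paper picks any $F$ with $\Phi(F) \neq 0$ and any non-scalar $G \in M_n(\mathbb C)$ and notes that $(\Phi \ox \id_n)(F \ox G) = \Phi(F) \ox G$ is not a multiple of the identity, so it has strictly positive numerical diameter by Proposition~\ref{prop:diamseminorm}; you instead first reduce to $\Phi = \varphi \cdot 1_{\mathcal S'}$ via Proposition~\ref{prop:seminorm} and then use an explicit level-two witness together with Lemma~\ref{lemma:2x2-matrix-unit-numerical-diam} (plus the observation, worth stating, that $1_{\mathcal S'} \ox A$ is a direct sum of copies of $A$, so its numerical range equals that of $A$). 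Both work. One small repair is needed in your self-adjoint case: the supremum defining $\lVert \cdot \rVert_{\sdiam}$ ranges over self-adjoint \emph{arguments} of $\Phi \ox \id_2$, and your witness (zero except for $E$ in the upper-right corner) is never self-adjoint, even when $E = E^*$; replace it by the matrix with $E$ in both off-diagonal corners, whose image has numerical diameter $2\lvert\varphi(E)\rvert > 0$, or simply use the paper's $F \ox G$ with $F$ and $G$ self-adjoint.

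The genuine error is your finiteness claim. It is false that $\sup_n \lVert \Phi \ox \id_n \rVert < \infty$ for every bounded map between operator systems: boundedness does not imply complete boundedness, the transpose on $B(\mathcal H)$ for infinite-dimensional $\mathcal H$ being the standard counterexample (a unital isometry with $\lVert \Phi \rVert_{\cb} = \infty$). By Theorem~\ref{thm:cbSpectralDiam}, or directly from Example~\ref{ex:growingcbdiam} which gives $\lVert \Phi \rVert_{\cbdiam} \geq n$ for the transpose on $M_n(\mathbb C)$, such a map also has infinite completely bounded numerical diameter. So finiteness simply fails for some paraunital maps; the paper's proof makes no finiteness claim and implicitly allows the value $+\infty$ (or tacitly restricts to completely bounded maps). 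You should delete the appeal to that ``standard fact''; the rest of your argument does not depend on it.
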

\begin{proof}
    That the completely bounded numerical diameter is homogeneous, positive semidefinite, and satisfies the triangle inequality all follow from Proposition \ref{prop:seminorm}.
    We show that the completely bounded numerical diameter is in fact definite.
    If $\Phi$ is non-zero, let $F$ be such that $\Phi(F) \ne 0$, and consider $G \neq c1_n$ for any $n > 1$.
    For $E = F \otimes G$, we then have
    \[
        (\Phi \otimes \id_n)(E) = \Phi(F) \otimes G
    \]
    which is not proportional to the identity, and in particular non-zero.
    It follows that ${\lVert \Phi(F) \otimes G \rVert_{\diam}} > 0$, so that $\lVert \Phi \rVert_{\cbdiam} > 0$.
    Similar remarks apply to the completely bounded self-adjoint numerical diameter by choosing $F$ and $G$ self-adjoint.    
\end{proof}
We can strengthen this observation, by describing how these norms relate to the completely bounded norm.

\begin{theorem}
  \label{thm:cbSpectralDiam}
  Let $\Phi: \mathcal S \to \mathcal S'$ be a paraunital map between operator systems. Then:
\smallskip
\begin{romanum}
\item
$ 
  \tfrac{1}{2}\lVert\Phi \rVert \leqslant \lVert \Phi\otimes \id_{4} \rVert_{\sdiam} \leqslant \lVert \Phi\otimes \id_{4} \rVert_{\diam}\,:
$
in particular,
\[
\tfrac{1}{2}\lVert \Phi \rVert_{\cb}  \leqslant \lVert \Phi \rVert_{\cbsdiam} \leqslant
\lVert \Phi \rVert_{\cb}
\quad \quad \textrm{and}  \quad  \quad
\tfrac{1}{2}\lVert \Phi \rVert_{\cb} \leqslant\lVert \Phi \rVert_{\cbdiam}\leqslant  2\lVert \Phi \rVert_{\cb} .
 \]

\item
If $\Phi$ is self-adjoint then 
$
\lVert\Phi \rVert \leqslant \lVert \Phi\otimes \id_{4} \rVert_{\sdiam} = \lVert \Phi\otimes \id_{4} \rVert_{\diam} \,$:
in particular,
  \[
\lVert \Phi \rVert_{\cb} = \|\Phi\|_{\cbsdiam}  = \lVert \Phi \rVert_{\cbdiam} .
\]
\end{romanum}
\end{theorem}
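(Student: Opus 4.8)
The plan is to extract both parts of the theorem from a single $2\times2$ self-adjoint dilation, together with the amplification and paraunitality facts already established. The upper bounds are formal. First, $\lVert\Phi\otimes\id_4\rVert_{\sdiam}\le\lVert\Phi\otimes\id_4\rVert_{\diam}$ holds by definition, the defining supremum for the self-adjoint numerical diameter running over a subset of that for the numerical diameter. Next, since $\Phi$ paraunital forces each amplification $\Phi\otimes\id_n$ to be paraunital, Theorem~\ref{thm:paraunitalbounds} gives $\lVert\Phi\otimes\id_n\rVert_{\sdiam}\le\lVert\Phi\otimes\id_n\rVert$ and $\lVert\Phi\otimes\id_n\rVert_{\diam}\le2\lVert\Phi\otimes\id_n\rVert$ for every $n$; taking suprema over $n$ yields $\lVert\Phi\rVert_{\cbsdiam}\le\lVert\Phi\rVert_{\cb}$ and $\lVert\Phi\rVert_{\cbdiam}\le2\lVert\Phi\rVert_{\cb}$, and $\lVert\Phi\rVert_{\cbsdiam}\le\lVert\Phi\rVert_{\cbdiam}$ is term by term. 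What remains in (i) is the lower bound $\tfrac12\lVert\Phi\rVert\le\lVert\Phi\otimes\id_4\rVert_{\sdiam}$ together with its completely bounded version.

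For the lower bound, fix $x\in\mathcal S$ with $x\neq0$ and put
\[
    E\;=\;\left[\begin{smallmatrix}0 & x\\ x^\ast & 0\end{smallmatrix}\right]\;\in\;(\mathcal S\otimes M_2)_{sa},
\]
which is self-adjoint and, its diagonal blocks being zero, never a scalar multiple of $1_{\mathcal S\otimes M_2}$. On a unit vector $\xi=\left(\begin{smallmatrix}u\\ v\end{smallmatrix}\right)$ one has $\langle E\xi,\xi\rangle=2\Re\langle xv,u\rangle$, a real scalar of modulus at most $\lVert x\rVert$; as $W(E)$ is symmetric about $0$ (the substitution $(u,v)\mapsto(u,-v)$ negates the form) and contains points approaching $\pm\lVert x\rVert$, we get $\lVert E\rVert_{\diam}=2\lVert x\rVert$. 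Now $(\Phi\otimes\id_2)(E)=\left[\begin{smallmatrix}0 & \Phi(x)\\ \Phi(x^\ast) & 0\end{smallmatrix}\right]$, and evaluating this at the unit vectors $\xi_\lambda=\left(\begin{smallmatrix}\lambda u\\ v\end{smallmatrix}\right)$, $\lvert\lambda\rvert=1$, produces the numerical-range points $\bar\lambda\langle\Phi(x)v,u\rangle+\lambda\langle\Phi(x^\ast)u,v\rangle$, which sweep out an origin-symmetric (possibly degenerate) ellipse of diameter $2\bigl(\lvert\langle\Phi(x)v,u\rangle\rvert+\lvert\langle\Phi(x^\ast)u,v\rangle\rvert\bigr)$. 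Taking $\lVert u\rVert=\lVert v\rVert=1/\sqrt2$ with $v$ chosen so that $\lVert\Phi(x)v\rVert$ nearly attains $\lVert\Phi(x)\rVert\lVert v\rVert$ and $u\propto\Phi(x)v$ drives $\lvert\langle\Phi(x)v,u\rangle\rvert\to\tfrac12\lVert\Phi(x)\rVert$, so $\bigl\lVert(\Phi\otimes\id_2)(E)\bigr\rVert_{\diam}\ge\lVert\Phi(x)\rVert$. Dividing by $\lVert E\rVert_{\diam}=2\lVert x\rVert$ and taking the supremum over $x$ gives $\lVert\Phi\otimes\id_2\rVert_{\sdiam}\ge\tfrac12\lVert\Phi\rVert$, and replacing $\Phi$ by $\Phi\otimes\id_n$ in the construction gives $\lVert\Phi\otimes\id_{2n}\rVert_{\sdiam}\ge\tfrac12\lVert\Phi\otimes\id_n\rVert$ for all $n$. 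Since $\lVert\cdot\rVert_{\sdiam}$ is non-decreasing under tensoring with an identity map — amplifying a self-adjoint test operator by $\id_k$ changes neither its numerical diameter nor that of its image — the first inequality upgrades to $\tfrac12\lVert\Phi\rVert\le\lVert\Phi\otimes\id_4\rVert_{\sdiam}$, and taking suprema over $n$ in the second gives $\tfrac12\lVert\Phi\rVert_{\cb}\le\lVert\Phi\rVert_{\cbsdiam}$.

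For (ii), when $\Phi$ is self-adjoint we have $\Phi(x^\ast)=\Phi(x)^\ast$, so the same dilation produces $(\Phi\otimes\id_2)(E)=\left[\begin{smallmatrix}0 & \Phi(x)\\ \Phi(x)^\ast & 0\end{smallmatrix}\right]$, which is itself self-adjoint, hence normal; its numerical diameter therefore equals its spectral diameter $2\lVert\Phi(x)\rVert$, the eigenvalues being $\pm$ the singular values of $\Phi(x)$. The factor $\tfrac12$ is now gone, so $\lVert\Phi\otimes\id_2\rVert_{\sdiam}\ge\lVert\Phi\rVert$ and $\lVert\Phi\otimes\id_{2n}\rVert_{\sdiam}\ge\lVert\Phi\otimes\id_n\rVert$, whence $\lVert\Phi\rVert\le\lVert\Phi\otimes\id_4\rVert_{\sdiam}$ and $\lVert\Phi\rVert_{\cb}\le\lVert\Phi\rVert_{\cbsdiam}$. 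Finally $\Phi\otimes\id_4$ is self-adjoint, so Proposition~\ref{prop:induced-sa-diam-on-sa-maps} gives $\lVert\Phi\otimes\id_4\rVert_{\sdiam}=\lVert\Phi\otimes\id_4\rVert_{\diam}$, and applied to each $\Phi\otimes\id_n$ it gives $\lVert\Phi\rVert_{\cbsdiam}=\lVert\Phi\rVert_{\cbdiam}$; together with $\lVert\Phi\rVert_{\cbsdiam}\le\lVert\Phi\rVert_{\cb}$ from (i), all three norms coincide.

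The main obstacle is the lower bound in (i). When $\Phi$ is not self-adjoint the output block $\left[\begin{smallmatrix}0 & \Phi(x)\\ \Phi(x^\ast) & 0\end{smallmatrix}\right]$ is not self-adjoint, so its numerical diameter cannot simply be read off from eigenvalues; the phase sweep $\xi_\lambda=\left(\begin{smallmatrix}\lambda u\\ v\end{smallmatrix}\right)$ is exactly what forces it to be at least $\lVert\Phi(x)\rVert$, and it is also precisely here that the factor $\tfrac12$ becomes unavoidable in general, since the unknown off-diagonal block $\Phi(x^\ast)$ can only enlarge the relevant ellipse, never shrink it, but cannot be harnessed. Everything else is routine bookkeeping with the amplification identities $(\Phi\otimes\id_m)\otimes\id_n=\Phi\otimes\id_{mn}$, the invariance of the numerical diameter under $E\mapsto E\otimes\id_n$, Theorem~\ref{thm:paraunitalbounds}, and Proposition~\ref{prop:induced-sa-diam-on-sa-maps}.
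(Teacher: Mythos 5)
Your proof is correct, and the key lower bound is obtained by a genuinely different route from the paper's. The paper composes two separate $2\times 2$ dilations: it first shows $\lVert F \oplus (-F)\rVert_{\diam} \ge 2\lVert F\rVert$ (with equality for self-adjoint $F$), which converts numerical diameters into operator norms on self-adjoint arguments at the cost of one tensor factor of $M_2$; it then uses the off-diagonal dilation $\bigl[\begin{smallmatrix}0 & A\\ A^* & 0\end{smallmatrix}\bigr]$ to pass from the norm of $\Phi\otimes\id_2$ on self-adjoint elements to $\lVert\Phi\rVert$, which costs the second factor and is why the statement is phrased with $\id_4$. You instead feed the off-diagonal dilation $E=\bigl[\begin{smallmatrix}0 & x\\ x^* & 0\end{smallmatrix}\bigr]$ directly into the numerical diameter and estimate $W\bigl((\Phi\otimes\id_2)(E)\bigr)$ by hand via the phase sweep $\xi_\lambda=\bigl(\begin{smallmatrix}\lambda u\\ v\end{smallmatrix}\bigr)$, which exhibits an origin-symmetric ellipse of diameter at least $2\lvert\langle\Phi(x)v,u\rangle\rvert$; this yields $\tfrac12\lVert\Phi\rVert\le\lVert\Phi\otimes\id_2\rVert_{\sdiam}$ (and $\lVert\Phi\rVert\le\lVert\Phi\otimes\id_2\rVert_{\sdiam}$ in the self-adjoint case), a formally stronger statement needing only one $M_2$ amplification, from which the $\id_4$ and completely bounded versions follow by the monotonicity of $\lVert\cdot\rVert_{\sdiam}$ under $E\mapsto E\otimes I_k$ that you correctly justify. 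The trade-off is that the paper's two-step argument isolates a reusable norm identity for $F\oplus(-F)$ and avoids any explicit numerical-range computation for a non-self-adjoint image block, whereas your single-dilation argument is shorter, sharpens $\id_4$ to $\id_2$, and makes visible exactly where the factor $\tfrac12$ enters for non-self-adjoint $\Phi$. The bookkeeping for the upper bounds and for part (ii) (via Theorem~\ref{thm:paraunitalbounds} and Proposition~\ref{prop:induced-sa-diam-on-sa-maps}) matches the paper's.
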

\begin{proof}
Let $E\in \mathcal S$ be self-adjoint. Then $E\oplus (-E)\in M_2(\mathcal S)$ has symmetric spectrum around the origin. Hence,
\[
\|E\oplus (-E)\|_{\diam} = 2\|E\oplus (-E)\| = 2\|E\|.
\]
This implies that for a general $E\in \mathcal S$
\begin{align*}
2\|E\| \ & \leqslant \ 2\bigl\lVert \Re(E) \bigr\rVert + 2\bigl\lVert \Im(E) \bigr\rVert
\\[.5ex] & = \ \bigl\lVert\Re(E) \oplus \bigl(-\!\:\Re  (E)\bigr) \bigr\rVert_{\diam} +\; \bigl\lVert\Im(E) \oplus \bigl(-\!\:\Im(E)\bigr) \bigr\rVert_{\diam}
\\[.5ex] & = \ \bigl\lVert \Re(E \oplus (-E)) \bigr\rVert_{\diam} +\; \bigl\lVert \Im(E \oplus (-E))\bigr\rVert_{\diam}
\\[.5ex] & \leqslant \ 2\bigl\lVert E \oplus (-E) \bigr\rVert_{\diam}
\end{align*}
Now to the main argument, first calculate
		\[
		  \begin{aligned}[b]
					\lVert \Phi\otimes \id_2 \rVert_{\sdiam}
				\;& \geqslant
					\sup_{E\in \mathcal S_{sa}, E\neq 0}
						\frac{\bigl\lVert (\Phi \ox \id_{2})\bigl(E \oplus (-E)\bigr) \bigr\rVert_{\diam}}{\bigl\lVert E \oplus (-E) \bigr\rVert_{\diam}}
				\\[.5ex]& =
					\sup_{E\in \mathcal S_{sa}, E\neq 0}
						\frac{\bigl\lVert \Phi(E) \oplus \bigl[-\Phi(E)\bigr] \bigr\rVert_{\diam}}{\bigl\lVert E \oplus (-E) \bigr\rVert_{\diam}}
				\\[.5ex]&\geqslant
					\sup_{E\in \mathcal S_{sa}, E\neq 0}
						\frac{\bigl\lVert \Phi(E) \bigr\rVert}{2\bigl\lVert E\oplus (-E) \bigr\rVert}
      \ \ \geqslant
					\sup_{E\in \mathcal S_{sa}, E\neq 0}
						\frac{\bigl\lVert \Phi(E) \bigr\rVert}{2\bigl\lVert E \bigr\rVert}\;,
		  \end{aligned}
		\]
where the numerator inequality of the second last step is given by the first argument of this proof and the denominator inequality is given by Proposition \ref{prop:diamcontinuous}.
Next, we have
\begin{align*}
\sup_{E\in M_2(\mathcal S)_{sa}, E\neq 0} \frac{\|(\Phi\otimes \id_2)(E)\|}{\|E\|} \ &  \geqslant \ \sup_{A\in \mathcal S, A\neq 0} \frac{\left\| (\Phi\otimes \id_2)\left(\left[\begin{matrix}0 & A \\ A^* & 0\end{matrix}\right] \right)\right\|}{\left\| \left[\begin{matrix}0 & A \\ A^* & 0\end{matrix}\right]\right\|}
\\[2ex] & = \ \sup_{A\in \mathcal S, A\neq 0} \frac{\max\{\Phi(A), \Phi(A^*)\}}{\|A\|}
\ =\ \|\Phi\|.
\end{align*}
Combining these two step yields $\frac{1}{2}\lVert\Phi \rVert \leqslant \lVert \Phi\otimes \id_{4} \rVert_{\sdiam}$ by replacing $\Phi$ with $\Phi\otimes\id_2$ in the first set of calculations.
Combining this with Theorem \ref{thm:paraunitalbounds} gives for any $n\in\mathbb N$
\begin{align*}
\\[-3ex]
 &\tfrac{1}{2}\|\Phi\otimes \id_n\| \leqslant \lVert \Phi \ox \id_{4n} \rVert_{\diam} \leqslant 2\lVert \Phi\otimes \id_{4n} \rVert
\quad  \textrm{and}  \\[.5ex]
 &\tfrac{1}{2}\|\Phi\otimes \id_n\| \leqslant\lVert \Phi \ox \id_{4n} \rVert_{\sdiam} \leqslant \lVert \Phi\otimes \id_{4n} \rVert\, .
\\[-3ex]
\end{align*}
This proves part \parit{i}.
For \parit{ii}, we add the further assumption that $\Phi$ is self-adjoint, and obtain
\[
		  \begin{aligned}[b]
					\lVert \Phi\otimes \id_2 \rVert_{\sdiam}
				\;& \geqslant
					\sup_{E\in \mathcal S_{sa}, E\neq 0}
						\frac{\bigl\lVert (\Phi \ox \id_{2})\bigl(E \oplus (-E)\bigr) \bigr\rVert_{\diam}}{\bigl\lVert E \oplus (-E) \bigr\rVert_{\diam}}
				\\[2ex]& =
					\sup_{E\in \mathcal S_{sa}, E\neq 0}
						\frac{\bigl\lVert \Phi(E) \oplus \bigl[-\Phi(E)\bigr] \bigr\rVert_{\diam}}{\bigl\lVert E \oplus (-E) \bigr\rVert_{\diam}}
    \;\;\geqslant
					\sup_{E\in \mathcal S_{sa}, E\neq 0}
						\frac{2\bigl\lVert \Phi(E) \bigr\rVert}{2\bigl\lVert E \bigr\rVert}.
		  \end{aligned}
		\]
		since $ \Phi(E) \oplus \bigl[-\Phi(E)\bigr]$ is self-adjoint, which allows us to use the very first equality of this proof. Thus, $\lVert\Phi \rVert \leqslant \lVert \Phi\otimes \id_{4} \rVert_{\sdiam}$, which by Proposition~\ref{prop:induced-sa-diam-on-sa-maps} is also equal to $\lVert \Phi\otimes \id_{4} \rVert_{\diam}$.
Therefore, using the upper bounds of the general version yields the last required result.
\end{proof}


Note that the (completely bounded) self-adjoint numerical diameter can in fact be strictly smaller than the (completely bounded) norm, for non-self-adjoint maps. This is seen in the following example.

\begin{example}
Consider the map $\Phi(\mathbb C) : M_2\rightarrow M_2(\mathbb C)$ defined by
\[
\Phi\left(\left[\begin{matrix} a & b \\ c & d\end{matrix}\right]\right) \ = \ \left[\begin{matrix} 0 & b \\ 0 & 0\end{matrix}\right].
\]
Consider the analogous map $\Phi_n : M_2(M_n(\mathbb C)) \rightarrow M_2(M_n(\mathbb C))$ defined by
\[
\Phi_n\left( \left[\begin{matrix} A & B \\ C &D\end{matrix}\right]\right) = \left[\begin{matrix} 0 & B \\ 0 &0\end{matrix}\right]:
\]
observe that there exists a permutation $P_n \in M_{2n}(\mathbb C)$, called the canonical shuffle, which defines an isometry $U(E) = P_n^*EP_n$ such that \[
U \circ (\Phi\otimes \id_n) = \Phi_n \,.
\]
It follows that $\|\Phi\| = \|\Phi\|_{\cb} = 1$.

To contrast, we may show that $\lVert \Phi \rVert_{\cbsdiam} < 1$.
Consider a self-adjoint matrix
\[
    E = \left[\begin{matrix} A & B \\ B^* & D\end{matrix}\right] \in M_2(M_n(\mathbb C)).
\]
Without affecting $\lVert E \rVert_{\diam}$\,, we may restrict our interest to the case $\lambda_{min}(E) = 0$ (so that $E \geqslant 0$), as we may otherwise consider the operator $E' = E \,-\, \lambda_{\min}(E)\cdot 1_{2n}$.
From $E\geqslant 0$, it follows that for $x,y\in \mathbb C^n$, we have
\[
0 \,\leqslant\, \begin{bmatrix} x & 0 \\ 0 & y \end{bmatrix}^*\! E\;\! \begin{bmatrix} x & 0 \\ 0 & y \end{bmatrix} \,=\, \left[\begin{matrix} x^*Ax & x^*By \\ y^*B^*x & y^*Dy\end{matrix}\right] \in M_2(\mathbb C).
\]
The non-negativity of the determinant gives that $|x^*By|^2 \leqslant (x^*Ax)(y^*Dy)$, which in turn implies that $|x^*By| \leqslant x^*Ax + y^*Dy$.
If we consider unit vectors $x,y \in \C^n$ for which $x^*By = \|B\|$, we obtain 
\[
\left\langle E\left[\begin{matrix} x \\ y\end{matrix}\right], \left[\begin{matrix} x \\ y\end{matrix}\right]\right\rangle \,=\, x^*Ax + x^*By + y^*B^*x + y^*Dy \,\geqslant\, 4\|B\|.
\]
As the norm of the block vector above is $\sqrt 2$, it follows that $2\|B\| \in W(E)$. We also assumed that $0\in W(E)$ and so $\|E\|_{\diam} \geqslant 2\|B\|$.
On the other hand, extending the proof of Lemma~\ref{lemma:2x2-matrix-unit-numerical-diam} to block matrices gives us
\[
W(\Phi_n(E)) \,=\, W\left(\left[\begin{matrix} 0 & B \\ 0 & 0 \end{matrix}\right]\right) \,=\, \left\{ z\in \mathbb C : |z| \leqslant \tfrac{1}{2}\|B\|\right\}
\]
This gives that $\|\Phi_n(E)\|_{\diam} = \|B\|$ and so $\|\Phi_n\|_{\sdiam} \leqslant 1/2$.
Considering $E  = \left[\begin{matrix} 0 & I \\ I & 0\end{matrix}\right]$ as an argument to $\Phi$, we may see that this bound holds with equality.
As unitary similarity doesn't change numerical diameters,
\[
\|\Phi\|_{\sdiam} = \|\Phi\|_{\cbsdiam} = 1/2 < 1 = \|\Phi\| = \|\Phi\|_{\cb}.
\]
\end{example}

This gives us the following conclusion:

\begin{theorem}\label{thm:cbdiamnorm}
For paraunital linear maps between fixed operator systems, $\|\cdot\|_{\cbdiam}$, $\| \cdot \|_{\cbsdiam}$, and $\|\cdot\|_{\cb}$ are all comparable yet distinct norms, but which coincide on self-adjoint maps.
\end{theorem}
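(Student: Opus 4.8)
The plan is to read this theorem off from the results already established; the proof is essentially bookkeeping. That all three quantities are norms on the space of paraunital maps between two fixed operator systems is known: $\|\cdot\|_{\cbdiam}$ and $\|\cdot\|_{\cbsdiam}$ are norms by Theorem~\ref{thm:cbnorms}, and $\|\cdot\|_{\cb}$ is a norm by standard operator space theory. Comparability is immediate from Theorem~\ref{thm:cbSpectralDiam}\parit{i}: the bounds $\tfrac12\|\Phi\|_{\cb}\le\|\Phi\|_{\cbsdiam}\le\|\Phi\|_{\cb}$ and $\tfrac12\|\Phi\|_{\cb}\le\|\Phi\|_{\cbdiam}\le 2\|\Phi\|_{\cb}$ show that each of the three norms is dominated by a fixed multiple of each of the other two. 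Coincidence on self-adjoint maps is exactly Theorem~\ref{thm:cbSpectralDiam}\parit{ii}.

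It remains to produce maps witnessing that no two of the three norms agree identically. The last worked example above already furnishes a map $\Phi\colon M_2(\mathbb C)\to M_2(\mathbb C)$ with $\|\Phi\|_{\cbsdiam}=\tfrac12\ne 1=\|\Phi\|_{\cb}$, so $\|\cdot\|_{\cbsdiam}\ne\|\cdot\|_{\cb}$. For the other two pairs I would use the map $\Phi$ of Example~\ref{example:diambound}, for which $\|\Phi\|_{\cbdiam}\ge\|\Phi\|_{\diam}\ge\sqrt2$. The one genuinely extra computation is that $\|\Phi\|_{\cb}=1$ (the example records only $\|\Phi\|=1$). For this, write $\Phi(E)=\psi_1(E)\,e_{11}+\psi_2(E)\,e_{22}$ with $\psi_1(E)=\tfrac1{\sqrt2}(E_{11}+E_{12})$ and $\psi_2(E)=\tfrac1{\sqrt2}(E_{22}-E_{12})$; each $\psi_i$ is a linear functional of the form $E\mapsto\Tr(E R_i)$ with $R_i$ of rank one and trace norm $1$, so $\|\psi_i\|=1$, and hence $\|\psi_i\|_{\cb}=1$ since a functional is automatically completely bounded with cb-norm equal to its norm. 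As $\Phi$ is the direct sum of $\psi_1$ and $\psi_2$ placed on the two diagonal blocks, $(\Phi\ox\id_n)(X)$ is block diagonal with blocks $(\psi_i\ox\id_n)(X)$, whence $\|\Phi\ox\id_n\|=\max_i\|\psi_i\ox\id_n\|\le 1$ and $\|\Phi\|_{\cb}=1$. Then $\|\Phi\|_{\cbdiam}\ge\sqrt2>1=\|\Phi\|_{\cb}\ge\|\Phi\|_{\cbsdiam}$, the last inequality by Theorem~\ref{thm:cbSpectralDiam}\parit{i}; this separates $\|\cdot\|_{\cbdiam}$ from both $\|\cdot\|_{\cb}$ and $\|\cdot\|_{\cbsdiam}$, so together with the previous example all three norms are pairwise distinct.

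I anticipate no real obstacle here: the argument is an assembly of Theorems~\ref{thm:cbnorms} and~\ref{thm:cbSpectralDiam} with the two examples, and the only non-citation step is the routine identification $\|\Phi\|_{\cb}=1$ in Example~\ref{example:diambound}. Should one prefer to avoid appealing to the completely-bounded-functional fact, one can instead verify $\|(\Phi\ox\id_n)(X)\|\le\|X\|$ directly for $X\in M_2(M_n(\mathbb C))$ by the same Cauchy--Schwarz / positive $2\times2$ minor estimate used in the last example of this section.
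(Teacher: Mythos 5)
Your proposal is correct and follows essentially the same route as the paper: comparability and the self-adjoint case from Theorem~\ref{thm:cbSpectralDiam}, the map $E\mapsto E_{12}e_{12}$ to separate $\|\cdot\|_{\cbsdiam}$ from $\|\cdot\|_{\cb}$, and Example~\ref{example:diambound} to separate $\|\cdot\|_{\cbdiam}$ from the other two. The only (harmless) difference is that where you verify $\|\Phi\|_{\cb}=1$ by hand via the decomposition into rank-one functionals, the paper simply cites the fact that a map into a commutative C$^*$-algebra has cb-norm equal to its norm.
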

\begin{proof}
The previous theorem gives the comparability of the norms and the previous example demonstrated that $\|\Phi\|_{\cbsdiam}$ can be strictly smaller than $\|\Phi\|_{\cb}$. 

Now consider the map $\Phi$ of Example~\ref{example:diambound}: as it maps into a commutative C$^*$-algebra,
we can apply \cite[Theorem 3.9]{Paulsenbook}, that the cb-norm of $\Phi$ is not larger than its operator norm.
Then 
\[
\|\Phi\|_{\cbsdiam} \leqslant \|\Phi\|_{\cb} = \|\Phi\| = 1 < \sqrt 2 \leqslant \|\Phi\|_{\diam} \leqslant \|\Phi\|_{\cbdiam}\, .
\]
Therefore, all three norms are distinct.
The statement regarding the application of these norms to self-adjoint paraunital maps immediately follows from Theorem~\ref{thm:cbSpectralDiam} and from the definition of the self-adjoint numerical diameter.
\end{proof}

An important consequence of Theorem \ref{thm:cbSpectralDiam} is that we now have examples of linear maps whose completely bounded numerical diameter is strictly greater than its numerical diameter and this can be achieved without having to actually calculate any specific diameters.

\begin{example}\label{ex:growingcbdiam}
Consider the transpose map $\Phi : M_n(\mathbb C) \rightarrow M_n(\mathbb C)$, $\Phi(A) = A^T$. Since $\|A^T\|_{\diam} = \|A\|$, because $W(A^T) = W(A)$, then $\|\Phi\|_{\diam} = 1 = \|\Phi\|$. It is a well-known result of Tomiyama \cite{Tomiyama} that $\|\Phi\|_{\cb} = n$. Therefore, the previous theorem gives us that $\|\Phi\|_{\cbdiam} \geqslant \|\Phi\|_{\cb} = n > 1 = \|\Phi\|_{\diam}$.
\end{example}

\section{Unital (completely) positive maps and their sections}

Among the paraunital maps, we are most interested in maps which are actually unital, and specifically with those which are either completely positive (`ucp' maps), or which are \emph{(bounded) sections} (\emph{i.e.},~right-inverses) of ucp maps.
It is clear that results on unital maps will often yield related results for paraunital maps; below we describe how, for injective paraunital maps $\Phi$, and for maps $\Phi$ whose null-space is spanned by $1$, we may to an extent reduce properties of the numerical diameter to the case of maps $\tilde \Phi$ which are unital completely positive maps, or sections of such maps.

\subsection{Preliminaries on (completely bounded) expansive maps}

It is good to first look at what is and is not possible for unital positive maps.

\begin{proposition}[Corollary 2.8, Proposition 2.11 \cite{Paulsenbook}]\label{prop:poscontractive}
  Let $\Phi: \mathcal S \to \mathcal S'$ be a unital linear map between operator systems.
  If $\Phi$ is contractive then it is positive. Moreover, if $\mathcal S$ is a C$^*$-algebra and $\Phi$ is positive then it is contractive.
\end{proposition}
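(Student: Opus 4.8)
The plan is to establish the two implications by separate elementary arguments, both of which control the self-adjoint part of $\Phi(E)$ and then exploit invariance under translation by scalar multiples of the unit, in the spirit of the elementary lemmas above.

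\emph{Contractive $\Rightarrow$ positive.} First I would show that a unital contraction $\Phi$ is self-adjoint, i.e.\ $\Phi(\mathcal S_{sa})\subseteq\mathcal S'_{sa}$. For $a\in\mathcal S_{sa}$ with $\lVert a\rVert\le 1$ and $t\in\mathbb R$ one has $\lVert a+it1_{\mathcal S}\rVert^2=\lVert a^2+t^21_{\mathcal S}\rVert=\lVert a\rVert^2+t^2\le 1+t^2$; writing $\Phi(a)=h+ik$ with $h,k\in\mathcal S'_{sa}$, unitality and contractivity together with the elementary bound $\lVert\Im T\rVert\le\lVert T\rVert$ give $\lVert k+t1_{\mathcal S'}\rVert\le\lVert \Phi(a)+it1_{\mathcal S'}\rVert=\lVert\Phi(a+it1_{\mathcal S})\rVert\le\sqrt{1+t^2}$ for every $t\in\mathbb R$. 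Since $\sigma(k)\subseteq\mathbb R$, the resulting inequality $(\lambda+t)^2\le 1+t^2$ for all $\lambda\in\sigma(k)$ and all $t$ forces $\sigma(k)=\{0\}$, hence $k=0$. Positivity is then immediate: given $a\ge 0$ one may assume $\lVert a\rVert\le 1$, so $\lVert a-\tfrac12 1_{\mathcal S}\rVert\le\tfrac12$ and hence $\lVert\Phi(a)-\tfrac12 1_{\mathcal S'}\rVert\le\tfrac12$; as $\Phi(a)$ is self-adjoint this gives $0\le\Phi(a)\le 1_{\mathcal S'}$. The only substantive point is the self-adjointness step.

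\emph{Positive and $\mathcal S$ a C$^*$-algebra $\Rightarrow$ contractive.} A positive map is automatically self-adjoint (write $a\in\mathcal S_{sa}$ as $a_+-a_-$ via functional calculus in $\mathcal S$) and bounded (splitting into real and imaginary parts already gives $\lVert\Phi\rVert\le 2$). For $a\in\mathcal S_{sa}$ with $\lVert a\rVert\le 1$ we have $1_{\mathcal S}\pm a\ge 0$, hence $1_{\mathcal S'}\pm\Phi(a)\ge 0$, so $\lVert\Phi(a)\rVert\le 1$; thus $\Phi$ is contractive on $\mathcal S_{sa}$. The remaining task — which is where the real work lies — is to pass to arbitrary $a$ with $\lVert a\rVert\le 1$. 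Here the naive splitting into $\Re a$ and $\Im a$, as well as the numerical-radius estimate $r(\Phi(a))\le\lVert a\rVert$ coming from states, each lose a factor of two. I would instead proceed in two moves. First, for a unitary $u\in\mathcal S$, restrict $\Phi$ to the \emph{abelian} C$^*$-subalgebra $C^*(u,1_{\mathcal S})$: a positive map out of an abelian C$^*$-algebra is completely positive, hence $2$-positive, so applying $\Phi\otimes\id_2$ to the positive element $\left[\begin{smallmatrix}1_{\mathcal S}&u^*\\ u&1_{\mathcal S}\end{smallmatrix}\right]$ of $M_2\bigl(C^*(u,1_{\mathcal S})\bigr)$ yields $\left[\begin{smallmatrix}1_{\mathcal S'}&\Phi(u)^*\\ \Phi(u)&1_{\mathcal S'}\end{smallmatrix}\right]\ge 0$, which by the standard $2\times 2$ block criterion forces $\lVert\Phi(u)\rVert\le 1$. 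Second, by the Russo--Dye theorem the closed unit ball of the unital C$^*$-algebra $\mathcal S$ is the closed convex hull of its unitary group, so any $a$ with $\lVert a\rVert\le 1$ is within $\epsilon$ of a convex combination $\sum_j t_j u_j$ of unitaries; then $\lVert\Phi(a)\rVert\le\epsilon\lVert\Phi\rVert+\sum_j t_j\lVert\Phi(u_j)\rVert\le\epsilon\lVert\Phi\rVert+1$, and letting $\epsilon\to 0$ gives $\lVert\Phi\rVert\le 1$, with equality by unitality.

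The main obstacle is precisely this last reduction: positivity by itself gives no hold on non-normal contractions, since it does not imply $2$-positivity. The device that breaks the factor-of-two barrier is that $2$-positivity is nevertheless automatic on abelian subalgebras, which applies to the C$^*$-algebra generated by a single unitary; the Russo--Dye theorem then upgrades the unitary case to all contractions.
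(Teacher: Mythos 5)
Your proof is correct. The paper does not prove this statement itself — it is quoted directly from Paulsen's book — and your argument is essentially the standard one found there: the $\lVert a+it1\rVert^2=\lVert a\rVert^2+t^2$ trick to show a unital contraction is self-adjoint and then positive, and Russo–Dye combined with control of $\Phi$ on unitaries for the converse. The one small deviation is at the unitary step: Paulsen gets $\lVert\Phi(u)\rVert\le 1$ by identifying $C^*(u,1_{\mathcal S})$ with $C(\sigma(u))$ and invoking the fact that unital positive maps on abelian C$^*$-algebras attain their norm at $1$, whereas you invoke the (later, stronger) fact that positive maps out of abelian C$^*$-algebras are completely positive and then apply $2$-positivity to the positive block matrix built from $u$; both are valid, yours trades an elementary partition-of-unity argument for a slightly heavier but cleaner structural fact.
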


\begin{definition}
Suppose $\Phi : \mathcal S \rightarrow \mathcal S'$ is a linear map between operator systems. We say $\Phi$ is \textbf{expansive} if $\|\Phi(A)\| \geqslant \|A\|$ for every $A\in \mathcal S$. Moreover, we say $\Phi$ is \textbf{completely expansive} if $\Phi\otimes \id_n$ is expansive on $\mathcal S\otimes M_n(\mathbb C)$ for every $n\in\mathbb N$.
\end{definition}

\begin{proposition}\label{prop:expansive}
  Let $\Phi: \mathcal S \to \mathfrak B$ be  a unital bounded map from an operator system to a C$^*$-algebra.
  If $\Phi$ is the section of a positive map then it is expansive.
\end{proposition}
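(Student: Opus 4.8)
The plan is to unwind the hypothesis that $\Phi$ is a section of a positive map and then invoke Proposition~\ref{prop:poscontractive}. By assumption there is a positive linear map $\Psi: \mathfrak B \to \mathcal S$ with $\Psi \circ \Phi = \id_{\mathcal S}$. First I would check that $\Psi$ is unital: since $\Phi$ is unital we have $\Phi(1_{\mathcal S}) = 1_{\mathfrak B}$, and therefore
\[
  \Psi(1_{\mathfrak B}) = \Psi\bigl(\Phi(1_{\mathcal S})\bigr) = (\Psi\circ\Phi)(1_{\mathcal S}) = 1_{\mathcal S}\,.
\]
So $\Psi$ is a unital positive map whose domain $\mathfrak B$ is a C$^*$-algebra.

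Next I would apply the second part of Proposition~\ref{prop:poscontractive}: a unital positive map from a C$^*$-algebra is contractive, hence $\lVert \Psi(B) \rVert \le \lVert B \rVert$ for every $B \in \mathfrak B$. In particular, for any $A \in \mathcal S$,
\[
  \lVert A \rVert = \bigl\lVert (\Psi\circ\Phi)(A) \bigr\rVert = \bigl\lVert \Psi\bigl(\Phi(A)\bigr) \bigr\rVert \le \bigl\lVert \Phi(A) \bigr\rVert\,,
\]
which is exactly the statement that $\Phi$ is expansive.

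There is essentially no obstacle here beyond being careful about which map is the section of which — the content is a two-line chain of inequalities once the unitality of $\Psi$ is noted and Proposition~\ref{prop:poscontractive} is in hand. (One small point worth flagging for the reader: the hypothesis that the codomain $\mathfrak B$ of $\Phi$ is a C$^*$-algebra is used precisely to place the \emph{domain} of $\Psi$ inside a C$^*$-algebra, which is what Proposition~\ref{prop:poscontractive} requires for positivity to force contractivity.)
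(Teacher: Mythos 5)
Your proof is correct and follows essentially the same route as the paper's: observe that $\Psi$ is unital (hence contractive on the C$^*$-algebra $\mathfrak B$ by Proposition~\ref{prop:poscontractive}) and then read off expansiveness from $\lVert A \rVert = \lVert \Psi(\Phi(A))\rVert \le \lVert \Phi(A)\rVert$. The paper phrases the final step via a normalized element $\tilde E = E/\lVert\Phi(E)\rVert$, but the content is identical, and your explicit check that $\Psi$ is unital is a welcome detail the paper leaves implicit.
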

\begin{proof}
  If $\Phi$ is the section of a positive map $\Psi: \mathfrak B \to \mathcal S$, then $\Psi$ is itself a unital map, and so is contractive by the Proposition \ref{prop:poscontractive}.
  For any $E \in \mathcal S$ with $\lVert E \rVert = 1$, let $\tilde E = E / \lVert \Phi(E) \rVert$, and $F = \Phi(\tilde E)$.
  By construction, we have $\lVert F \rVert = 1$, so that
\[
  \begin{aligned}
    \frac{1}{\lVert \Phi(E) \rVert} 
  =
    \lVert \tilde E \rVert
  =
    \lVert \Psi(F) \rVert
  \leqslant
    1\,,
  \end{aligned}
\]
  so that $\lVert \Phi(E) \rVert \geqslant 1$.
 \end{proof}

There is no converse to the previous proposition in general.
To see this, suppose one has a unital bounded expansive map.
Then, it is injective and its inverse on the range operator system will be unital and contractive, and therefore positive by Proposition \ref{prop:poscontractive}. However, unital positive maps on operator systems need not extend to positive maps on the whole algebra. There are a number of examples of this failure of extension. Below we see an example of this in matrices.

\begin{example}[Paulsen \cite{Paulsen}]
Consider the operator system 
\[
\mathcal S \ = \ \left\{ \left[\begin{matrix} aI_2 & B \\ C & dI_2 \end{matrix}\right] : a,d\in \mathbb C, B,C\in M_2(\mathbb C)\right\} \subseteq M_4(\mathbb C)
\]
and consider the map $\Phi : \mathcal S \rightarrow M_4(\mathbb C)$ defined by 
\[
\Phi\left(\left[\begin{matrix} aI_2 & B \\ C & dI_2 \end{matrix}\right]\right) \ = \ \left[\begin{matrix} aI_2 & B^T \\ C^T & dI_2 \end{matrix}\right]. 
\]
Any nondiagonal positive element of $\mathcal S$ must have $a,d>0$. Hence, 
\bgroup
\allowdisplaybreaks
\begin{align*}
    \left[\begin{matrix} 
        aI_2 & B \\[1ex] B^* & dI_2 
    \end{matrix}\right] \geqslant 0 &
    \ \iff \ 
    \left[\begin{matrix}
        I_2 & \frac{1}{a}B \\[1ex] \frac{1}{a}B^* & \frac{d}{a}I_2
    \end{matrix}\right] \geqslant 0
\\[1ex] & \ \iff \ 
    \frac{1}{a^2}B^*B \leqslant \frac{d}{a}I_2  \;,
\qquad \textrm{by \cite[Lemma 3.1]{Paulsenbook}}
\\[1ex] & \ \iff \ 
    \|B^T\| = \|B\| \leqslant \sqrt{ad}
\\[.5ex] & \ \iff \ 
    \frac{1}{a^2}(B^T)^*B^T \leqslant \frac{d}{a}I_2
\\[1ex] & \ \iff \ 
    \Phi\left(\left[\begin{matrix} 
        aI_2 & B \\ B^* & dI_2 
    \end{matrix}\right]\right)
    \ = \ 
    \left[\begin{matrix} 
        aI_2 & B^T \\ (B^*)^T & dI_2 
    \end{matrix}\right]\geqslant 0\;.
\end{align*}%
\egroup%
Thus, $\Phi$ is unital and positive. 
Suppose that $\Phi$ had a positive extension to all of $M_4(\mathbb C)$. We may calculate:
\begin{align*}
\mathcal S 
\,\ni\, 
\left[\begin{matrix} I_2 & 0 \\ 0 & 0 \end{matrix}\right]
\  = \ \Phi\left(\left[\begin{matrix} I_2 & 0 \\ 0 & 0 \end{matrix}\right]\right)
 & = \ \Phi\left(\left[\begin{matrix} E_{1,1} & 0 \\ 0 & 0 \end{matrix}\right]\right) + \Phi\left(\left[\begin{matrix} E_{2,2} & 0 \\ 0 & 0 \end{matrix}\right]\right)
\\[.5ex] & = \ \left[\begin{matrix} X_1 & Y_1 \\ Y_1^* & Z_1 \end{matrix}\right] + \left[\begin{matrix} X_2 & Y_2 \\ Y_2^* & Z_2 \end{matrix}\right]
\end{align*}
with both of these latter matrices positive. This means that $Z_1 = -Z_2$ and $Z_1,Z_2\geqslant 0$ which implies that $Z_1=Z_2=0$ and $Y_1 = Y_2 = 0$.
Hence
\[
\Phi\left(\left[\begin{matrix} E_{1,1} & E_{1,1} \\ E_{1,1} & I_2 \end{matrix}\right]\right) \ = \ \left[\begin{matrix} X_1 & E_{1,1} \\ E_{1,1} & I_2 \end{matrix}\right]\geqslant 0
\ \ \textrm{and} \ \
\Phi\left(\left[\begin{matrix} E_{2,2} & E_{2,1} \\ E_{1,2} & I_2 \end{matrix}\right]\right) \ = \ \left[\begin{matrix} X_2 & E_{1,2} \\ E_{2,1} & I_2 \end{matrix}\right]\geqslant 0
\]
since the matrix outside of the upper left-hand corner is in $\mathcal S$. This implies that \[E_{1,1}X_1E_{1,1}, E_{1,1}X_2E_{1,1} \geqslant 1\]
and so
\[
1 = E_{1,1}I_2E_{1,1} = E_{1,1}X_1E_{1,1} + E_{1,1}X_2E_{1,1} \geqslant 2
\]
a contradiction. Therefore, $\Phi$ is a unital positive linear map of an operator system that does not extend to a positive map on a C$^*$-algebra.
\end{example}


We now repeat the previous discussion, but in the completely positive context, since the complete structure is very powerful. In particular, the assumption about mapping into a C$^*$-algebra can be weakened.

\begin{proposition}\label{prop:cccpcexpansive}
  Let $\Phi : \mathcal S \rightarrow \mathcal S'$ be a unital linear map between operator systems. Then $\Phi$ is completely positive if and only if it is completely contractive ($\|\Phi\|_{\textrm{cb}} = 1$). Moreover, if it is the completely bounded section of a completely positive map then it is completely expansive.
\end{proposition}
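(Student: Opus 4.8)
The plan is to mirror the proofs of the non-complete analogues, Propositions~\ref{prop:poscontractive} and~\ref{prop:expansive}: replace positivity by complete positivity and contractivity by complete contractivity throughout, and use Arveson's Extension Theorem to move the argument into a C$^*$-algebra, where Proposition~\ref{prop:poscontractive} tells us that unital positive maps are automatically contractive.

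For the equivalence I would argue both directions by amplification. If $\Phi$ is completely contractive, then each $\Phi\otimes\id_n : M_n(\mathcal S)\to M_n(\mathcal S')$ is a unital contractive map between operator systems, hence positive by Proposition~\ref{prop:poscontractive}; since $n$ is arbitrary, $\Phi$ is completely positive. For the converse, suppose $\Phi$ is unital completely positive, with $\mathcal S\subseteq B(\mathcal H)$ and $\mathcal S'\subseteq B(\mathcal K)$. By Arveson's Extension Theorem, cf.\ \cite{Paulsenbook}, extend $\Phi$ to a unital completely positive map $\tilde\Phi : B(\mathcal H)\to B(\mathcal K)$. For each $n$, $\tilde\Phi\otimes\id_n$ is a unital positive map on the C$^*$-algebra $M_n(B(\mathcal H))$, hence contractive by Proposition~\ref{prop:poscontractive}; so $\|\Phi\|_{\cb}\le\|\tilde\Phi\|_{\cb}=1$, while unitality forces $\|\Phi\|_{\cb}\ge 1$, giving complete contractivity.

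For the last claim, let $\Psi:\mathcal S'\to\mathcal S$ be completely positive with $\Psi\circ\Phi=\id_{\mathcal S}$. Since $\Phi$ is unital, $\Psi(1_{\mathcal S'})=\Psi(\Phi(1_{\mathcal S}))=1_{\mathcal S}$, so $\Psi$ is unital completely positive and hence completely contractive by the equivalence above. Then for every $n$ and every $E\in M_n(\mathcal S)$,
\[
\|E\| \,=\, \bigl\|(\Psi\otimes\id_n)\bigl((\Phi\otimes\id_n)(E)\bigr)\bigr\| \,\le\, \bigl\|(\Phi\otimes\id_n)(E)\bigr\|,
\]
so each $\Phi\otimes\id_n$ is expansive, i.e.\ $\Phi$ is completely expansive.

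The only step that is not bookkeeping is the forward direction of the equivalence: as the remark after Proposition~\ref{prop:expansive} emphasises, a unital positive map on an operator system need \emph{not} be contractive, so one genuinely has to dilate to $B(\mathcal H)$ via Arveson's theorem before Proposition~\ref{prop:poscontractive} can be invoked. Everything else reduces to the standard facts that $M_n$ of an operator system (resp.\ of a C$^*$-algebra) is again an operator system (resp.\ a C$^*$-algebra) and that amplification respects units and composition.
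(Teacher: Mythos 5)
Your proof is correct. The paper itself gives essentially no argument here: it cites \cite[Propositions 3.5--3.6]{Paulsenbook} for the equivalence and says the section statement follows by adapting the proof of Proposition~\ref{prop:expansive}, which is exactly what your last paragraph does (and your version is in fact cleaner, since knowing $\Psi\otimes\id_n$ is contractive lets you skip the normalisation trick with $\tilde E$). The one place where you take a genuinely different route is the direction ``unital completely positive $\Rightarrow$ completely contractive'': Paulsen's proof uses the $2\times 2$ positivity criterion --- $\lVert A\rVert\le 1$ iff $\left[\begin{smallmatrix}1 & A\\ A^* & 1\end{smallmatrix}\right]\ge 0$ --- applied to $\Phi\otimes\id_2$, which works directly on operator systems, whereas you first dilate via Arveson's Extension Theorem so that the Russo--Dye-type ``positive unital on a C$^*$-algebra is contractive'' half of Proposition~\ref{prop:poscontractive} applies. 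Your route is heavier machinery but perfectly valid, and it has the virtue of making explicit why the C$^*$-algebra hypothesis of Proposition~\ref{prop:expansive} can be dropped in the completely positive setting: Arveson supplies the C$^*$-algebraic ambient that the order-one argument had to assume. You also correctly flag the only non-automatic point, namely that $\Psi$ inherits unitality from $\Psi\circ\Phi=\id_{\mathcal S}$ and $\Phi(1_{\mathcal S})=1_{\mathcal S'}$.
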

\begin{proof}
The first statement is \cite[Proposition 3.5-3.6]{Paulsenbook}. The second statement follows from adjusting the proof of Proposition \ref{prop:expansive} to this complete context. 
\end{proof}

\begin{proposition}
Let $\Phi: B(\mathcal H) \to \mathcal S'$ be a unital completely bounded linear map into an operator system.
If $\Phi$ is completely expansive then it is the section of a completely positive map.
\end{proposition}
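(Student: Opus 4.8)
The plan is to pass to the inverse of $\Phi$ on its range, observe that this inverse is a unital completely contractive map \emph{into $B(\mathcal H)$}, and then extend that inverse to a completely positive map on all of $\mathcal S'$; that extension will be the completely positive map of which $\Phi$ is a section. The essential point — and the reason this converse succeeds here while the merely-positive analogue discussed above fails — is that the codomain of the inverse is $B(\mathcal H)$, so Arveson-type extension is available.

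First I would use complete expansiveness in the case $n=1$ to conclude that $\Phi$ is injective, and since $\Phi$ is also bounded it is a topological embedding, so $\mathcal T := \Phi(B(\mathcal H))$ is a norm-closed unital subspace of $\mathcal S'$. Let $\Phi^{-1}: \mathcal T \to B(\mathcal H)$ denote the inverse. For $A \in M_n(\mathcal T)$, writing $A = (\Phi \otimes \id_n)(B)$, complete expansiveness gives $\lVert (\Phi^{-1} \otimes \id_n)(A) \rVert = \lVert B \rVert \le \lVert (\Phi \otimes \id_n)(B) \rVert = \lVert A \rVert$, so $\Phi^{-1}$ is completely contractive; it is unital because $\Phi(1) = 1$.

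Next I would extend $\Phi^{-1}$ to a unital completely positive map $\Psi : \mathcal S' \to B(\mathcal H)$. Realise $\mathcal S' \subseteq B(\mathcal K)$ concretely. The self-adjoint operator system $\mathcal R = \mathcal T + \mathcal T^{\ast}$ carries a unital completely positive extension of $\Phi^{-1}$: here one uses that the restriction of $\Phi^{-1}$ to the operator system $\mathcal T \cap \mathcal T^{\ast}$ is unital and completely contractive, hence completely positive by Proposition~\ref{prop:cccpcexpansive}, hence self-adjoint, which makes $s + t^{\ast} \mapsto \Phi^{-1}(s) + \Phi^{-1}(t)^{\ast}$ well defined on $\mathcal R$, and then complete positivity of this map follows from complete contractivity of $\Phi^{-1}$ on $\mathcal T$ together with Proposition~\ref{prop:cccpcexpansive} again. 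Arveson's extension theorem (cf.\ \cite{Paulsenbook}) then extends this to a unital completely positive map $B(\mathcal K) \to B(\mathcal H)$, and $\Psi$ is its restriction to $\mathcal S'$. By construction $\Psi|_{\mathcal T} = \Phi^{-1}$, so $\Psi \circ \Phi = \id_{B(\mathcal H)}$ and $\Phi$ is a section of the completely positive map $\Psi$.

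The one delicate step is the extension in the previous paragraph. If one is willing to cite the precise form of the completely bounded extension theorem that applies to unital completely contractive maps on \emph{operator spaces} into $B(\mathcal H)$ (equivalently, the correspondence between such maps and restrictions of unital completely positive maps), then this is immediate; otherwise one must carry out by hand the passage from $\Phi^{-1}$ on $\mathcal T$ to a unital completely positive map on $\mathcal T + \mathcal T^{\ast}$, the mild subtlety being well-definedness on $\mathcal T \cap \mathcal T^{\ast}$. Everything else — injectivity, the complete contractivity estimate, and the composition $\Psi\circ\Phi = \id$ — is routine bookkeeping.
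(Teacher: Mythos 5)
Your proof is correct and follows essentially the same route as the paper: invert $\Phi$ on its range, observe that the inverse is unital and completely contractive, and extend it to a unital completely positive map on the ambient $B(\mathcal K)$ by Arveson's extension theorem. You are in fact slightly more careful than the paper at one point --- the paper simply asserts that $\Phi(B(\mathcal H))$ is an operator system, whereas you correctly note that the image need not be self-adjoint and pass through $\mathcal T + \mathcal T^{*}$ via the standard well-definedness argument (\emph{cf.}\ Proposition 3.5 of \cite{Paulsenbook}).
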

\begin{proof}
First, concretely embed $\mathcal S' \subseteq B(\mathcal H')$.
Suppose $\Phi$ is completely expansive which gives that it is injective. Let $\mathcal T' = \Phi(B(\mathcal H)) \subseteq \mathcal S'$, an operator system. Define $\psi : \mathcal T' \rightarrow B(\mathcal H)$ to be the inverse of $\Phi$. It is immediate that $\psi$ is unital and completely contractive which means that it extends to a unital completely positive map $\Psi$ on all of $B(\mathcal H')$ by Arveson's Extension Theorem. Therefore, $\Phi$ is a section of this $\Psi|_{\mathcal S'}$.
\end{proof}

We are really only interested in sections of unital positive maps, that are themselves unital, and self-adjoint.
These properties are not automatic, as for instance ${\Phi: \mathbb C\rightarrow M_2(\mathbb C)}$ defined by
\[
\Phi(x) = \left[\begin{matrix} 2x & x \\ 0 & 0\end{matrix}\right]
\]
is neither unital or self-adjoint but is a section of the normalized trace which is unital and positive.

\subsection{Diameters of bounded sections of unital positive maps}

We turn now to a discussion of the numerical diameter under unital positive maps and their bounded sections, with an aim to characterise isometries.

As positive maps $\Psi$ are self-adjoint, we have $\lVert \Psi \rVert_{\sdiam} = \lVert \Psi \rVert_{\diam}$ by Proposition~\ref{prop:induced-sa-diam-on-sa-maps} in this case; and similarly for any self-adjoint section that $\Psi$ admits.
We make use of this relationship without further comment for the remainder of this Section.

\begin{proposition}\label{prop:contractivediameter}
Let $\Phi : \mathcal S \rightarrow \mathcal S'$ be a unital (completely) bounded map between operator systems such that $\dim \mathcal S > 1$.
If $\Phi$ is positive, then $\|\Phi\|_{\diam} \leqslant 1$. Moreover, if $\Phi$ is completely positive then $\|\Phi\|_{\cbdiam} \leqslant 1$.
\end{proposition}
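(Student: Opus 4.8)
The plan is to settle first the case where $\Phi$ is positive and unital, and then deduce the completely positive statement as an immediate ampliation argument. Since a positive linear map between operator systems is automatically self-adjoint (every self-adjoint element is a difference of positives, e.g.\ $2E = (\lVert E\rVert 1_{\mathcal S} + E) - (\lVert E\rVert 1_{\mathcal S} - E)$), Proposition~\ref{prop:induced-sa-diam-on-sa-maps} applies and reduces the problem to self-adjoint arguments: it suffices to show $\lVert \Phi(E)\rVert_{\diam} \le \lVert E\rVert_{\diam}$ for every $E \in \mathcal S_{sa}$ with $E \notin \mathbb C\cdot 1_{\mathcal S}$.

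For such an $E$ I would use the order sandwich $\lambda_{\min}(E)\,1_{\mathcal S} \le E \le \lambda_{\max}(E)\,1_{\mathcal S}$, which is legitimate because $\lambda_{\min}$ and $\lambda_{\max}$ of a self-adjoint element of an operator system are well-defined independently of the embedding into some $B(\mathcal H)$, as recorded after Proposition~\ref{prop:stone}. Applying $\Phi$, which is positive and fixes the identity, preserves this inequality: $\lambda_{\min}(E)\,1_{\mathcal S'} \le \Phi(E) \le \lambda_{\max}(E)\,1_{\mathcal S'}$. Hence $\lambda_{\min}(\Phi(E)) \ge \lambda_{\min}(E)$ and $\lambda_{\max}(\Phi(E)) \le \lambda_{\max}(E)$, and since $\Phi(E)$ is self-adjoint,
\[
\lVert \Phi(E)\rVert_{\diam} = \lambda_{\max}(\Phi(E)) - \lambda_{\min}(\Phi(E)) \;\le\; \lambda_{\max}(E) - \lambda_{\min}(E) = \lVert E\rVert_{\diam}.
\]
Taking the supremum over admissible $E$ gives $\lVert \Phi\rVert_{\sdiam} \le 1$, and then $\lVert \Phi\rVert_{\diam} = \lVert \Phi\rVert_{\sdiam} \le 1$ by Proposition~\ref{prop:induced-sa-diam-on-sa-maps}.

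For the moreover clause, if $\Phi$ is completely positive then for each $n$ the ampliation $\Phi\otimes \id_n : M_n(\mathcal S) \to M_n(\mathcal S')$ is again unital (as $(\Phi\otimes\id_n)(1_{M_n(\mathcal S)}) = 1_{M_n(\mathcal S')}$) and positive, and $\dim M_n(\mathcal S) > 1$, so the first part applies verbatim to it, yielding $\lVert \Phi\otimes\id_n\rVert_{\diam} \le 1$. Taking the supremum over $n$ in Definition~\ref{def:cbSpectralDiam} gives $\lVert \Phi\rVert_{\cbdiam} \le 1$.

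I do not expect a genuine obstacle here. The only points that need care are the observation that positivity forces $\Phi$ to be self-adjoint (so that $\Phi(E)$ is self-adjoint and Proposition~\ref{prop:induced-sa-diam-on-sa-maps} is available), and that the operator inequalities $\lambda_{\min}(E)\,1 \le E \le \lambda_{\max}(E)\,1$ and their $\Phi$-images make sense purely at the level of operator systems rather than requiring an ambient C$^*$-algebra — both covered by the earlier development. The hypothesis $\dim \mathcal S > 1$ (and $\dim M_n(\mathcal S) > 1$) is used only to ensure the seminorms in question are defined.
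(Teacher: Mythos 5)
Your argument is correct, but it follows a genuinely different route from the paper's. You first observe that positivity forces $\Phi$ to be self-adjoint, invoke Proposition~\ref{prop:induced-sa-diam-on-sa-maps} to reduce to self-adjoint arguments, and then run the order-theoretic sandwich $\lambda_{\min}(E)1_{\mathcal S} \le E \le \lambda_{\max}(E)1_{\mathcal S}$ through the unital positive map to pinch the spectral diameter of $\Phi(E)$; this is elementary and is essentially the forward-direction analogue of the argument the paper uses later for sections in Proposition~\ref{prop:sectionisexpansive}. The paper instead works with an arbitrary (not necessarily self-adjoint) $E$ and proves the stronger containment $W(\Phi(E)) \subseteq \overline{W(E)}$: each point of $W(\Phi(E))$ is realised by a state $f$, the composite $f\circ\Phi$ is unital and positive on the span of $\{1, E, E^*\}$, hence completely positive there, and Arveson's Extension Theorem upgrades it to a state on $C^*(1,E)$ evaluating $E$ at the same point. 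Your approach buys simplicity — no extension theorem and no states are needed, only the embedding-independence of $\lambda_{\min}$, $\lambda_{\max}$ recorded after Proposition~\ref{prop:stone} — while the paper's buys the sharper geometric fact that the whole numerical range is contracted, which yields the diameter bound for all $E$ at once without passing through the self-adjoint reduction. Both handle the completely positive case identically, by applying the positive case to each ampliation $\Phi\otimes\id_n$. The only points worth making explicit in your write-up are the one-line verification that a positive map on an operator system is self-adjoint (which you sketch) and that positivity of $E - \lambda_{\min}(E)1_{\mathcal S}$ is positivity in the ambient $B(\mathcal H)$, so that $\Phi$ applies to it as a positive element of $\mathcal S$; neither is a gap.
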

\begin{proof}
First assume $\Phi$ is positive. Concretely, and non-degenerately, embed $\mathcal S \subseteq B(\mathcal H)$ and $\mathcal S' \subseteq B(\mathcal H')$. 

Let $E\in \mathcal S$ such that it is not a scalar multiple of the identity. By definition, for each $z\in W(\Phi(E))$ there is a state $f$ of $C^*(1_\mathcal H,\Phi(E))$ such that $f(\Phi(E)) = z$. This implies that $f$ is unital and positive on
$\,\Span\,\{1_{\mathcal H'}, \Phi(E), \Phi(E)^*\}\,$. Hence, $f\circ\Phi$ is unital and positive on $\{1_{\mathcal H}, E, E^*\}$ and so completely positive \cite[Proposition 3.8]{Paulsenbook}. This extends to a ucp map $g$ on all of $C^*(1_\mathcal H, E)$ by Arveson's Extension Theorem. Thus, $g$ is a state and $z = f\circ \Phi(E) = g(E) \in \overline{W(E)}$. 

We have therefore proven $W(\Phi(E)) \subseteq \overline{W(E)}$.
The first result follows from this, and the second result follows by considering $\Phi \otimes \id_n$ for arbitrary $n > 1$.
\end{proof}

To illustrate the differences of Proposition \ref{prop:poscontractive} and the previous proposition consider the following example.

\begin{example}
Let $\mathcal S = \Span\{1, z, \bar z\} \subset C(\mathbb T)$ and define $\Phi : \mathcal S \rightarrow M_2(\mathbb C)$ by 
\[
\Phi(a + bz + c\bar z) \ = \ \left[\begin{matrix} a & 2b \\ 2c & a\end{matrix}\right].
\]
It can be shown that $\Phi$ is unital and positive but not contractive \cite[Example 2.2]{Paulsenbook}, in particular $\|\Phi\| = 2$. However, by the last proposition, $\Phi$ is still contractive with respect to the numerical diameter. In particular, $W(z)$ is the convex hull of $\sigma(z) = \mathbb T$ and so
\[
W\left(\left[\begin{matrix} 0 & 2 \\ 0 & 0\end{matrix}\right]\right)
= \{ 2y\bar x : |x|^2 + |y|^2 = 1\} = \overline{\mathbb D} = W(z).
\]
\end{example}

\begin{corollary}\label{cor:expansive}
If $\Phi : \mathcal S \rightarrow \mathcal S'$, $\dim \mathcal S > 1$, is a bounded section of a unital (completely) positive linear map, then $\Phi$ is (completely) expansive with respect to $\|\cdot\|_{\diam}$.
\end{corollary}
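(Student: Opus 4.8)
The plan is to realise $\Phi$ as a right inverse of a map to which Proposition~\ref{prop:contractivediameter} applies, and then to observe that being a section forces $\Phi$ to \emph{undo} the contraction of the numerical diameter that Proposition~\ref{prop:contractivediameter} guarantees.

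First I would fix notation: let $\Psi:\mathcal S'\to\mathcal S$ be the unital (completely) positive map of which $\Phi$ is a section, so that $\Psi\circ\Phi=\id_{\mathcal S}$. Since $\Phi$ has a left inverse it is injective, hence $\dim\mathcal S'\ge\dim\mathcal S>1$ and Proposition~\ref{prop:contractivediameter} gives $\lVert\Psi\rVert_{\diam}\le1$. The next observation is that this bound says more, namely that $\lVert\Psi(F)\rVert_{\diam}\le\lVert F\rVert_{\diam}$ for \emph{every} $F\in\mathcal S'$: for $F\notin\mathbb C\cdot1_{\mathcal S'}$ we have $\lVert F\rVert_{\diam}>0$ and the inequality is just the definition of $\lVert\Psi\rVert_{\diam}$, while for $F\in\mathbb C\cdot1_{\mathcal S'}$ both sides vanish because $\Psi$ is unital.

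Now for the positive case I would take an arbitrary $E\in\mathcal S$. If $E\in\mathbb C\cdot1_{\mathcal S}$ then $\lVert E\rVert_{\diam}=0\le\lVert\Phi(E)\rVert_{\diam}$. Otherwise $\Phi(E)\notin\mathbb C\cdot1_{\mathcal S'}$, since $\Phi(E)=c1_{\mathcal S'}$ would give, on applying $\Psi$, that $E=\Psi(\Phi(E))=c1_{\mathcal S}$. Applying the observation above with $F=\Phi(E)$ then yields
\[
  \lVert E\rVert_{\diam}
  \;=\;
  \bigl\lVert(\Psi\circ\Phi)(E)\bigr\rVert_{\diam}
  \;=\;
  \bigl\lVert\Psi\bigl(\Phi(E)\bigr)\bigr\rVert_{\diam}
  \;\le\;
  \bigl\lVert\Phi(E)\bigr\rVert_{\diam}\,,
\]
which is precisely the statement that $\Phi$ is expansive with respect to $\lVert\cdot\rVert_{\diam}$.

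For the completely positive case I would run the same argument one level up. If $\Psi$ is completely positive then $\Psi\otimes\id_n$ is unital and positive for every $n$, and $(\Psi\otimes\id_n)\circ(\Phi\otimes\id_n)=\id_{M_n(\mathcal S)}$, so $\Phi\otimes\id_n$ is a bounded section of the unital positive map $\Psi\otimes\id_n$ on $M_n(\mathcal S')$. The first part applied to $\Phi\otimes\id_n$ gives $\lVert(\Phi\otimes\id_n)(X)\rVert_{\diam}\ge\lVert X\rVert_{\diam}$ for all $X$ and all $n$, i.e.\ $\Phi$ is completely expansive with respect to $\lVert\cdot\rVert_{\diam}$. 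I do not expect a genuine obstacle here; the one point that needs care is that $\Phi$ itself need not be unital (nor even paraunital), so one cannot simply invoke submultiplicativity (Lemma~\ref{lem:submultiplicative}) for $\Phi$, and must instead track a single operator $E$ through $\Psi\circ\Phi$ as above, using only that $\Psi$ (and each $\Psi\otimes\id_n$) is unital and positive.
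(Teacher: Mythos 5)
Your proof is correct and is essentially the argument the paper intends (the corollary is stated without proof immediately after Proposition~\ref{prop:contractivediameter}): compose with the unital positive map $\Psi$, use $\lVert\Psi(F)\rVert_{\diam}\le\lVert F\rVert_{\diam}$ from that proposition, and amplify by $\id_n$ for the complete case. Your extra care about $\dim\mathcal S'>1$ via injectivity, the scalar cases, and the inapplicability of Lemma~\ref{lem:submultiplicative} to a possibly non-paraunital $\Phi$ is all sound.
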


We can get a little finer detail by adding the assumption that the map is self-adjoint.

\begin{proposition}\label{prop:sectionisexpansive}
Let $\Phi: \mathcal S \to \mathcal S', \dim \mathcal S > 1$, be a unital self-adjoint bounded section of a unital positive map between operator systems, concretely embedded in $B(\mathcal H)$ and $B(\mathcal H')$ respectively.
 Then for all self-adjoint $A\in\mathcal S$,
  \[
 \lambda_{\min}\bigl(\Phi(A)\bigr) \leqslant \lambda_{\min}(A) \leqslant \lambda_{\max}(A) \leqslant \lambda_{\max}(\Phi(A)).
  \] 
\end{proposition}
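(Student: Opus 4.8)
The plan is to leverage Proposition~\ref{prop:sectionisexpansive}'s hypotheses together with the containment of numerical ranges established inside the proof of Proposition~\ref{prop:contractivediameter}. Write $\Psi: \mathcal S' \to \mathcal S$ for the unital positive map of which $\Phi$ is a section, so $\Psi \circ \Phi = \id_{\mathcal S}$. Since $\Psi$ is unital and positive, the argument in the proof of Proposition~\ref{prop:contractivediameter} shows $W\bigl(\Psi(F)\bigr) \subseteq \overline{W(F)}$ for every $F \in \mathcal S'$; applying this with $F = \Phi(A)$ gives
\[
  \overline{W(A)} \;=\; \overline{W\bigl(\Psi(\Phi(A))\bigr)} \;\subseteq\; \overline{W\bigl(\Phi(A)\bigr)}.
\]
For self-adjoint $A$, both $A$ and $\Phi(A)$ are self-adjoint (the latter because $\Phi$ is self-adjoint), so their closed numerical ranges are the real intervals $[\lambda_{\min}(A),\lambda_{\max}(A)]$ and $[\lambda_{\min}(\Phi(A)),\lambda_{\max}(\Phi(A))]$ respectively, by Proposition~\ref{prop:stone} applied to these normal (indeed Hermitian) operators. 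The inclusion of intervals then directly yields $\lambda_{\min}(\Phi(A)) \le \lambda_{\min}(A)$ and $\lambda_{\max}(A) \le \lambda_{\max}(\Phi(A))$, with the middle inequality $\lambda_{\min}(A) \le \lambda_{\max}(A)$ trivial.

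A point to be careful about: the numerical-range containment in Proposition~\ref{prop:contractivediameter} is phrased for $E$ not a scalar multiple of the identity, and one should check the degenerate case. If $A = c\,1_{\mathcal S}$ then $\Phi(A) = c\,1_{\mathcal S'}$ by unitality, and all four quantities equal $c$, so the chain of inequalities holds with equalities throughout. One should also double-check that self-adjointness of $\Phi$ genuinely forces $\Phi(A)$ self-adjoint for self-adjoint $A$ — this is immediate from $\Phi(A)^* = \Phi(A^*) = \Phi(A)$ — and that the identity $\Psi\circ\Phi = \id$ on $\mathcal S$ is exactly what "section" means here.

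I do not anticipate a serious obstacle: the entire content is already packaged in the proof of Proposition~\ref{prop:contractivediameter} (namely $W(\Psi(F)) \subseteq \overline{W(F)}$ for unital positive $\Psi$) combined with the elementary fact that the closed numerical range of a Hermitian operator is the interval spanned by its extreme eigenvalues. If anything, the minor subtlety is purely bookkeeping: making sure the roles of $\Phi$ and $\Psi$ are not interchanged, since it is $\Psi$ (the positive map) that shrinks numerical ranges and hence $\Phi$ (its section) that expands them, which is precisely the asymmetry recorded in Corollary~\ref{cor:expansive}.
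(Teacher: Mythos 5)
Your proof is correct, but it takes a different route from the paper's. The paper argues directly from positivity: it translates $A$ to $E = A - \lambda_{\min}(A)1_{\mathcal S}$ so that $\lambda_{\min}(E)=0$, and then derives a contradiction from the assumption $\lambda_{\min}(\Phi(E))>0$, since $\Psi$ would then map the positive operator $\Phi(E)-\lambda_{\min}(\Phi(E))1$ to $E-\lambda_{\min}(\Phi(E))1$, which has a negative eigenvalue; this uses nothing beyond the definition of positivity and the translation-invariance of eigenvalue shifts. You instead invoke the numerical-range containment $W(\Psi(F))\subseteq\overline{W(F)}$ extracted from the proof of Proposition~\ref{prop:contractivediameter} (which itself rests on the state-space description of $\overline{W}$ and Arveson's extension theorem), apply it with $F=\Phi(A)$, and read off the eigenvalue inequalities from the inclusion of the intervals $\overline{W(A)}\subseteq\overline{W(\Phi(A))}$ via Proposition~\ref{prop:stone}. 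Your argument is a clean corollary of already-established machinery and in fact yields the stronger conclusion $\overline{W(A)}\subseteq\overline{W(\Phi(A))}$ for arbitrary (not necessarily self-adjoint) $A$; the paper's is more elementary and self-contained. One bookkeeping point you should make explicit: the containment from Proposition~\ref{prop:contractivediameter} is stated for arguments not proportional to the identity, and here the argument is $F=\Phi(A)$, not $A$ itself; you need that $\Phi(A)\propto 1_{\mathcal S'}$ forces $A\propto 1_{\mathcal S}$ (immediate from $\Psi\circ\Phi=\id$ and unitality of $\Psi$, or from injectivity of the section together with unitality of $\Phi$), so that the nondegenerate case is genuinely covered. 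With that observation your degenerate-case check for $A=c1_{\mathcal S}$ completes the argument.
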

\begin{proof}
Let $\Psi: \mathcal S' \to \mathcal S$ be a unital positive map for which $\Phi$ is a section. Let $A\in \mathcal S$ be self-adjoint and $E = {A - \lambda_{\min}(A)1_{\mathcal S}}$. Then $\lambda_{\min}(E) = 0$. If $\lambda_{\min}(\Phi(E)) > 0$ then
$\Phi(E) - \lambda_{\min}(\Phi(E))1_{\mathfrak B}$ is a positive operator
but
\[
\lambda_{\min}\left(\Psi\Big(\Phi(E) - \lambda_{\min}(\Phi(E))1_{\mathfrak B}\Big)\right) = \lambda_{\min}(E - \lambda_{\min}(\Phi(E))1_{\mathcal S}) < 0
\]
which contradicts the positivity of $\Psi$. Hence, $\lambda_{\min}(\Phi(E)) < 0 = \lambda_{\min}(E)$ and so 
\begin{align*}
\lambda_{\min}(\Phi(A)) \ & = \ \lambda_{\min}(\Phi(E + \lambda_{\min}(A)1_{\mathcal S})) 
\\ & = \ \lambda_{\min}(\Phi(E)) + \lambda_{\min}(A)
\\ & \leqslant \ \lambda_{\min}(A)\, .
\end{align*}
Lastly,
\[
\lambda_{\max}(A) = -\lambda_{\min}(-A) \leqslant -\lambda_{\min}(-\Phi(A)) = \lambda_{\max}(\Phi(A))\, .
\qedhere
\]
\end{proof}
\smallskip

\begin{proposition}%
    \label{prop:optorNorm-sdiam-ineqs}%
  Let $\Phi: \mathcal S \to \mathcal S'$ be a unital self-adjoint section of a unital positive map between operator systems.
  Then 
  \[
  \|\Phi\|_{\sdiam}
  \;\le\;
  \bigl\lVert\;\!\Phi\big\vert_{\mathcal S_{sa}}\;\!\bigr\rVert  \ \leqslant \ 2\|\Phi\|_{\sdiam} - 1
  \]
  and
 \[
  1 \;\leqslant\; \|\Phi\|_{\sdiam} \; \leqslant \; \lVert \Phi \rVert \; \leqslant \; 4\lVert \Phi \rVert_{\mathrm{sdiam}} - 2\,.
 \]
\end{proposition}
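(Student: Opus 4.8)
The plan is to route everything through the operator norm on self-adjoint elements, $\bigl\lVert \Phi|_{\mathcal S_{sa}} \bigr\rVert$, and to pin down the upper constant $2\lVert\Phi\rVert_{\sdiam}-1$ using Proposition~\ref{prop:sectionisexpansive}. Concretely, I would establish the chain
\[
1 \;\le\; \lVert\Phi\rVert_{\sdiam} \;\le\; \bigl\lVert \Phi|_{\mathcal S_{sa}}\bigr\rVert \;\le\; 2\lVert\Phi\rVert_{\sdiam}-1, \qquad \bigl\lVert \Phi|_{\mathcal S_{sa}}\bigr\rVert \;\le\; \lVert\Phi\rVert \;\le\; 2\bigl\lVert \Phi|_{\mathcal S_{sa}}\bigr\rVert,
\]
from which both displayed lines of the statement follow immediately (for the last one, $\lVert\Phi\rVert \le 2\bigl\lVert\Phi|_{\mathcal S_{sa}}\bigr\rVert \le 2(2\lVert\Phi\rVert_{\sdiam}-1) = 4\lVert\Phi\rVert_{\sdiam}-2$).

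The bound $1\le\lVert\Phi\rVert_{\sdiam}$ is immediate from Corollary~\ref{cor:expansive}. For $\lVert\Phi\rVert_{\sdiam}\le\bigl\lVert\Phi|_{\mathcal S_{sa}}\bigr\rVert$, given a non-scalar self-adjoint $A$ I would choose $k_A\in\mathbb R$ with $\lVert A-k_A1_{\mathcal S}\rVert=\tfrac12\lVert A\rVert_{\diam}$ (Proposition~\ref{prop:diameterlowerbounds}(2)) and then combine translation invariance of $\lVert\cdot\rVert_{\diam}$, unitality of $\Phi$, and $\lVert\cdot\rVert_{\diam}\le2\lVert\cdot\rVert$ (Proposition~\ref{prop:diamcontinuous}) to obtain $\lVert\Phi(A)\rVert_{\diam}=\lVert\Phi(A-k_A1_{\mathcal S})\rVert_{\diam}\le2\bigl\lVert\Phi|_{\mathcal S_{sa}}\bigr\rVert\,\lVert A-k_A1_{\mathcal S}\rVert=\bigl\lVert\Phi|_{\mathcal S_{sa}}\bigr\rVert\,\lVert A\rVert_{\diam}$; here $A-k_A1_{\mathcal S}$ is again self-adjoint, which is why the self-adjoint operator norm suffices. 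The inequalities $\bigl\lVert\Phi|_{\mathcal S_{sa}}\bigr\rVert\le\lVert\Phi\rVert\le2\bigl\lVert\Phi|_{\mathcal S_{sa}}\bigr\rVert$ are standard: the first is trivial, and for the second, writing a unit-norm $A$ as $\Re(A)+i\Im(A)$ with self-adjoint parts of norm $\le1$ gives $\lVert\Phi(A)\rVert\le\lVert\Phi(\Re A)\rVert+\lVert\Phi(\Im A)\rVert\le2\bigl\lVert\Phi|_{\mathcal S_{sa}}\bigr\rVert$.

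The crux is $\bigl\lVert\Phi|_{\mathcal S_{sa}}\bigr\rVert\le2\lVert\Phi\rVert_{\sdiam}-1$. Take self-adjoint $A$ with $\lVert A\rVert=1$; after replacing $A$ by $-A$ if needed (which changes neither side, as $\Phi$ is linear and $\lVert\Phi\rVert_{\sdiam}$ is a supremum over all self-adjoint arguments) I may assume $\lambda_{\max}(A)=1$, so that $d:=\lVert A\rVert_{\diam}=1-\lambda_{\min}(A)\in[0,2]$, with the case $A=1_{\mathcal S}$ trivial. Since $\Phi$ is self-adjoint, $\Phi(A)$ is self-adjoint, hence $\lVert\Phi(A)\rVert_{\diam}=\lambda_{\max}(\Phi(A))-\lambda_{\min}(\Phi(A))\le\lVert\Phi\rVert_{\sdiam}\,d$. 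Feeding in Proposition~\ref{prop:sectionisexpansive} (which gives $\lambda_{\max}(\Phi(A))\ge1$ and $\lambda_{\min}(\Phi(A))\le1-d$) together with $\lVert\Phi\rVert_{\sdiam}\ge1$ and $d\le2$ yields
\[
1\;\le\;\lambda_{\max}(\Phi(A))\;\le\;(1-d)+\lVert\Phi\rVert_{\sdiam}\,d\;\le\;2\lVert\Phi\rVert_{\sdiam}-1, \qquad 1-2\lVert\Phi\rVert_{\sdiam}\;\le\;\lambda_{\min}(\Phi(A))\;\le\;1,
\]
so both $|\lambda_{\max}(\Phi(A))|$ and $|\lambda_{\min}(\Phi(A))|$ are at most $2\lVert\Phi\rVert_{\sdiam}-1$, i.e.\ $\lVert\Phi(A)\rVert\le2\lVert\Phi\rVert_{\sdiam}-1$.

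I expect this last step to be the only genuine obstacle: it requires controlling both extreme eigenvalues of $\Phi(A)$ simultaneously — they are pushed outward by the section property but kept from spreading too far by the self-adjoint numerical diameter bound — and making the constants line up, using $\lVert\Phi\rVert_{\sdiam}\ge1$ and $d\le2$ in the right places. The remaining steps are routine translation-invariance and real/imaginary-part bookkeeping.
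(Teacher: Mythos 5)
Your proposal is correct and follows essentially the same route as the paper: the key step in both is to combine Proposition~\ref{prop:sectionisexpansive} (which pushes $\lambda_{\max}(\Phi(A))$ up and $\lambda_{\min}(\Phi(A))$ down past those of $A$) with the bound $\lVert\Phi(A)\rVert_{\diam}\le\lVert\Phi\rVert_{\sdiam}\lVert A\rVert_{\diam}$ to control both extreme eigenvalues, and then pass from $\bigl\lVert\Phi|_{\mathcal S_{sa}}\bigr\rVert$ to $\lVert\Phi\rVert$ via real and imaginary parts. The only difference is cosmetic (you normalise $\lVert A\rVert=1$ where the paper normalises $\lVert A\rVert_{\diam}=1$), so nothing further is needed.
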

\begin{proof}
Let $\|\Phi\|_{\sdiam} = 1+\epsilon$, for $\epsilon\geqslant 0$, since we know $\Phi$ is expansive.
For $A \in \mathcal S$ self-adjoint with $\|A\|_{\diam} = 1$ we know by the previous proposition that 
\[
 \lambda_{\min}\bigl(\Phi(A)\bigr) \leqslant \lambda_{\min}(A) \leqslant \lambda_{\max}(A) \leqslant \lambda_{\max}(\Phi(A))\,.
  \]
Since $\lambda_{\max}(A)-\lambda_{\min}(A) = 1$ and $\lambda_{\max}(\Phi(A)) - \lambda_{\min}(\Phi(A)) \leqslant 1+\epsilon$ then
\[ \lambda_{\min}(A)-\epsilon \leqslant \lambda_{\min}(\Phi(A)) \quad \textrm{and} 
\quad \lambda_{\max}(\Phi(A)) \leqslant \lambda_{\max}(A) + \epsilon\,.
\]
Thus,
\begin{align*}
\|\Phi(A)\| \ & = \ \max\{\lambda_{\max}(\Phi(A)), -\lambda_{\min}(\Phi(A))\}
\\ & \leqslant \ \max\{\lambda_{\max}(A) + \epsilon, -\lambda_{\min}(A) + \epsilon\}
\,=\, \|A\| + \epsilon\,
\end{align*}
which implies
\[
\frac{\|\Phi(A)\|}{\|A\|} \leqslant 1 + \frac{\epsilon}{\|A\|} \leqslant 1 + 2\epsilon
\]
since $\frac{1}{2} \leqslant \|A\| \leqslant 1$. Therefore, 
\[
\bigl\lVert \Phi |_{\mathcal S_{sa}}\bigr\rVert \,\leqslant\, 1 + 2\epsilon \,=\, 2(1+\epsilon) - 1 \,=\, 2\|\Phi\|_{\sdiam} - 1\, .
\]

Lastly, for any $A\in\mathcal S$
\begin{align*}
\|\Phi(A)\| \ & \leqslant \ \|\Phi(\!\:\Re   A)\| + \|\Phi(\!\:\Im   A)\|
\\[.5ex] & \leqslant \, \ \|\Phi|_{\mathcal S_{sa}}\|\cdot \|\Re   A\| \,+\, \|\Phi|_{\mathcal S_{sa}}\|\cdot \|\Im   A\|
\\[.5ex] & \leqslant\, \ 2\bigl\|\Phi|_{\mathcal S_{sa}}\bigr\|\cdot \|A\|
\\[.5ex] & \leqslant\, \ 2(2\|\Phi\|_{\sdiam}- 1)\|A\| 
\;\leqslant\, \ (4\|\Phi\|_{\sdiam} - 2)\|A\|\, .
\end{align*}
This along with Corollary~\ref{cor:expansive} and Theorem~\ref{thm:paraunitalbounds} establish the last set of inequalities.
\end{proof}

We can now reach our main conclusions of this section.

\begin{theorem}
Let $\Phi: \mathfrak A \to \mathfrak B$ be a unital self-adjoint bounded section of a unital positive map between unital C$^*$-algebras.
Then $\|\Phi\|_{\sdiam} = 1$ if and only if $\Phi$ is an isometric map.
\end{theorem}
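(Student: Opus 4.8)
The plan is to prove the two implications separately, the forward direction being the substantive one. Throughout one should assume $\dim\mathfrak A>1$: if $\mathfrak A=\mathbb C\cdot 1_{\mathfrak A}$ then $\|\cdot\|_{\sdiam}$ is not even defined and there is nothing to prove, and $\dim\mathfrak A>1$ is in any case what is needed to apply Corollary~\ref{cor:expansive} and Proposition~\ref{prop:optorNorm-sdiam-ineqs}.

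For $(\Leftarrow)$, suppose $\Phi$ is isometric. Since $\Phi$ is a bounded section of a unital positive map, Corollary~\ref{cor:expansive} gives $\|\Phi(A)\|_{\diam}\ge\|A\|_{\diam}$ for self-adjoint $A$, so $\|\Phi\|_{\sdiam}\ge 1$; the content is the reverse inequality. Fix a self-adjoint $A\in\mathfrak A$ with $\|A\|_{\diam}=1$. By part~(2) of Proposition~\ref{prop:diameterlowerbounds} the translate $k_A=\tfrac12(\lambda_{\max}(A)+\lambda_{\min}(A))$ satisfies $\|A-k_A1_{\mathfrak A}\|=\tfrac12$, and since $\Phi$ is unital and isometric, $\|\Phi(A)-k_A1_{\mathfrak B}\|=\|\Phi(A-k_A1_{\mathfrak A})\|=\tfrac12$. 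Translation invariance of the numerical diameter (the corollary to Proposition~\ref{prop:diamseminorm}) together with $\|\cdot\|_{\diam}\le 2\|\cdot\|$ (Proposition~\ref{prop:diamcontinuous}) then gives $\|\Phi(A)\|_{\diam}=\|\Phi(A)-k_A1_{\mathfrak B}\|_{\diam}\le 2\|\Phi(A)-k_A1_{\mathfrak B}\|= 1$. Hence $\|\Phi\|_{\sdiam}\le 1$, and equality holds.

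For $(\Rightarrow)$, suppose $\|\Phi\|_{\sdiam}=1$. Proposition~\ref{prop:optorNorm-sdiam-ineqs} gives at once $\|\Phi|_{\mathfrak A_{sa}}\|\le 2\|\Phi\|_{\sdiam}-1=1$, so $\Phi$ is contractive on self-adjoint elements. The crucial step is to upgrade this to positivity: for $A\in\mathfrak A$ with $0\le A\le 1_{\mathfrak A}$ the element $2A-1_{\mathfrak A}$ is self-adjoint of norm at most $1$, so by unitality $\|2\Phi(A)-1_{\mathfrak B}\|=\|\Phi(2A-1_{\mathfrak A})\|\le 1$, which forces $0\le\Phi(A)\le 1_{\mathfrak B}$; rescaling shows $\Phi$ sends positive elements to positive elements. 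As $\mathfrak A$ is a C$^*$-algebra, Proposition~\ref{prop:poscontractive} now shows the unital positive map $\Phi$ is contractive. On the other hand $\Phi$ is a unital bounded section of a positive map into the C$^*$-algebra $\mathfrak B$, so by Proposition~\ref{prop:expansive} it is expansive; combining the two inequalities, $\Phi$ is isometric.

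I expect the main obstacle to be precisely this passage from self-adjoint to general elements in $(\Rightarrow)$: a bound on $\|\Phi\|_{\sdiam}$ only controls $\Phi$ on $\mathfrak A_{sa}$, and the naive splitting $A=\Re(A)+i\,\Im(A)$ loses a factor of $2$. The resolution is the affine substitution $A\mapsto 2A-1_{\mathfrak A}$, which turns ``$\Phi$ unital and contractive on self-adjoints'' into ``$\Phi$ positive'', after which the operator-norm isometry is squeezed out between the two already-established inequalities (unital positive $\Rightarrow$ contractive, section of a unital positive map $\Rightarrow$ expansive). An alternative to invoking Proposition~\ref{prop:optorNorm-sdiam-ineqs}, which I would mention in passing, is to combine $\|\Phi\|_{\sdiam}=1$ with Corollary~\ref{cor:expansive} to get $\|\Phi(A)\|_{\diam}=\|A\|_{\diam}$ for self-adjoint $A$, and then use Proposition~\ref{prop:sectionisexpansive} to force $\lambda_{\min}(\Phi(A))=\lambda_{\min}(A)$ and $\lambda_{\max}(\Phi(A))=\lambda_{\max}(A)$, from which positivity of $\Phi$, and hence the conclusion, again follows.
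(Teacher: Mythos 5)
Your proof is correct, but the route differs from the paper's in both directions, so a comparison is worthwhile. For $(\Rightarrow)$, the paper goes straight from $\lVert\Phi\rVert_{\sdiam}=1$ and Corollary~\ref{cor:expansive} to $\lVert\Phi(A)\rVert_{\diam}=\lVert A\rVert_{\diam}$ for self-adjoint $A$, and then squeezes the eigenvalue inequalities of Proposition~\ref{prop:sectionisexpansive} into the equalities $\lambda_{\max}(\Phi(A))=\lambda_{\max}(A)$, $\lambda_{\min}(\Phi(A))=\lambda_{\min}(A)$, which gives positivity immediately; this is exactly the ``alternative'' you mention in passing, so you have in effect rediscovered the paper's argument as your secondary option. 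Your primary argument instead passes through Proposition~\ref{prop:optorNorm-sdiam-ineqs} to get $\lVert\Phi|_{\mathfrak A_{sa}}\rVert\le 1$ and then recovers positivity by the affine substitution $A\mapsto 2A-1_{\mathfrak A}$; this is perfectly valid (and the $2A-1$ trick is a clean, self-contained way to see that a unital self-adjoint map contractive on self-adjoints is positive), but note that Proposition~\ref{prop:optorNorm-sdiam-ineqs} is itself proved via Proposition~\ref{prop:sectionisexpansive}, so the underlying machinery is the same and your route is, if anything, slightly less direct. From positivity onward the two proofs coincide (Proposition~\ref{prop:poscontractive} for contractivity, Proposition~\ref{prop:expansive} for expansiveness). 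For $(\Leftarrow)$, the paper argues that an isometric unital map is contractive, hence positive, and then applies Proposition~\ref{prop:contractivediameter} to get $\lVert\Phi\rVert_{\diam}\le 1$; you instead rerun the translation argument from Theorem~\ref{thm:paraunitalbounds} to get $\lVert\Phi\rVert_{\sdiam}\le\lVert\Phi\rVert=1$ directly. Both are correct; yours avoids the numerical-range containment lemma at the cost of repeating a computation already done in the paper. Your observation that one should assume $\dim\mathfrak A>1$ is a fair point that the theorem statement leaves implicit.
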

\begin{proof}
Suppose $\|\Phi\|_{\sdiam} = 1$ which means that $\|\Phi(A)\|_{\diam} = \|A\|_{\diam}$ for every $A\in\mathfrak A$ self-adjoint by Corollary \ref{cor:expansive}.
More specifically, Proposition \ref{prop:sectionisexpansive} proves that $\lambda_{\max}(\Phi(A)) = \lambda_{\max}(A)$ and $\lambda_{\min}(\Phi(A))=\lambda_{\min}(A)$. Hence, $\Phi$ is a positive map and since it is also unital then $\Phi$ is contractive by Proposition \ref{prop:poscontractive}. Therefore, $\Phi$ is expansive and contractive, that is isometric.

On the other hand, suppose $\Phi$ is isometric. $\Phi$ is unital contractive and so it is positive. By Proposition \ref{prop:contractivediameter} $\|\Phi\|_{\diam} \leqslant 1$.
Combining this with the expansiveness of the numerical diameter we get $\|\Phi\|_{\diam} = \|\Phi\|_{\sdiam} = 1$.
\end{proof}

In the complete case we can loosen our context to operator systems.

\begin{theorem}\label{thm:cbdiamisometry}
Let $\Phi: \mathcal S \to \mathcal S'$ be a unital self-adjoint bounded section of a unital completely positive map between operator systems. Then $\|\Phi\|_{\cbsdiam} = 1$ if and only if $\Phi$ is a completely isometric map.
\end{theorem}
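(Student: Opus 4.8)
The plan is to mirror the proof of the preceding theorem for C$^*$-algebras, replacing each appeal to "a unital positive map on a C$^*$-algebra is contractive" (Proposition~\ref{prop:poscontractive}) by its complete analogue "a unital completely positive map is completely contractive" (Proposition~\ref{prop:cccpcexpansive}) --- this substitution is exactly what lets the hypothesis be relaxed from C$^*$-algebras to operator systems. Throughout, fix a ucp map $\Psi$ admitting $\Phi$ as a section, and observe that for every $n$ the amplification $\Phi \otimes \id_n$ is again a unital self-adjoint bounded section, now of the unital positive map $\Psi \otimes \id_n$, so that the structural results of this Section apply to it. (The degenerate case $\dim \mathcal S = 1$ may be set aside as trivial.)

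For the forward direction, suppose $\|\Phi\|_{\cbsdiam} = 1$. First I would note that Corollary~\ref{cor:expansive} makes each $\Phi \otimes \id_n$ expansive for $\|\cdot\|_{\diam}$, so in fact $\|\Phi \otimes \id_n\|_{\sdiam} = 1$ for every $n$. Applying Proposition~\ref{prop:sectionisexpansive} to $\Phi \otimes \id_n$ gives, for self-adjoint $A \in M_n(\mathcal S)$,
\[
\lambda_{\min}\bigl((\Phi\otimes\id_n)(A)\bigr) \,\le\, \lambda_{\min}(A) \,\le\, \lambda_{\max}(A) \,\le\, \lambda_{\max}\bigl((\Phi\otimes\id_n)(A)\bigr),
\]
while $\bigl\lVert (\Phi\otimes\id_n)(A) \bigr\rVert_{\diam} = \lVert A \rVert_{\diam}$ forces the outer gap to equal the inner one; hence $\lambda_{\max}$ and $\lambda_{\min}$ are each preserved. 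In particular $A \ge 0$ implies $(\Phi\otimes\id_n)(A) \ge 0$, so $\Phi \otimes \id_n$ is positive for every $n$; that is, $\Phi$ is completely positive. By Proposition~\ref{prop:cccpcexpansive}, $\Phi$ is then completely contractive, and so is $\Psi$; since $(\Psi\otimes\id_n)\circ(\Phi\otimes\id_n) = \id_{M_n(\mathcal S)}$, for any $E \in M_n(\mathcal S)$ we get $\|E\| = \bigl\lVert (\Psi\otimes\id_n)(\Phi\otimes\id_n)(E) \bigr\rVert \le \bigl\lVert (\Phi\otimes\id_n)(E) \bigr\rVert \le \|E\|$, so $\Phi$ is completely isometric.

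For the converse, suppose $\Phi$ is a complete isometry. Then $\Phi$ is unital and completely contractive, hence completely positive by Proposition~\ref{prop:cccpcexpansive}, so Proposition~\ref{prop:contractivediameter} yields $\|\Phi\|_{\cbsdiam} \le \|\Phi\|_{\cbdiam} \le 1$. On the other hand $\Phi$ is a bounded section of a ucp map, so the complete form of Corollary~\ref{cor:expansive} makes every $\Phi\otimes\id_n$ expansive for $\|\cdot\|_{\diam}$, giving $\|\Phi\|_{\cbsdiam} \ge 1$. Hence $\|\Phi\|_{\cbsdiam} = 1$.

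I expect the only real point of care to be the bookkeeping in the second paragraph: verifying that $\Phi\otimes\id_n$ actually inherits the hypotheses of Propositions~\ref{prop:sectionisexpansive} and~\ref{prop:cccpcexpansive} (unitality and self-adjointness are immediate, and $\Psi\otimes\id_n$ is unital and positive precisely because $\Psi$ is ucp), together with the observation that equality of numerical diameters promotes the two-sided eigenvalue inequality of Proposition~\ref{prop:sectionisexpansive} to separate equalities $\lambda_{\max}\bigl((\Phi\otimes\id_n)(A)\bigr) = \lambda_{\max}(A)$ and $\lambda_{\min}\bigl((\Phi\otimes\id_n)(A)\bigr) = \lambda_{\min}(A)$. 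That last step is the crux, since it is what converts the diameter hypothesis into genuine (complete) positivity.
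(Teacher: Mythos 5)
Your proof is correct, but the forward direction takes a genuinely different route from the paper's. The paper dispatches it in one line: since $\Phi$ is self-adjoint and paraunital, Theorem~\ref{thm:cbSpectralDiam}\,\parit{ii} gives $\lVert \Phi \rVert_{\cb} = \lVert \Phi \rVert_{\cbsdiam} = 1$, so $\Phi$ is completely contractive, and Proposition~\ref{prop:cccpcexpansive} supplies complete expansiveness directly from the section hypothesis. You instead bypass Theorem~\ref{thm:cbSpectralDiam} entirely and amplify the eigenvalue argument of the preceding C$^*$-algebra theorem to every matrix level: Corollary~\ref{cor:expansive} upgrades $\lVert \Phi\otimes\id_n\rVert_{\sdiam}\le 1$ to diameter preservation, Proposition~\ref{prop:sectionisexpansive} sandwiches the spectral endpoints, and the equal gaps force $\lambda_{\max}$ and $\lambda_{\min}$ to be preserved separately, whence $\Phi\otimes\id_n$ is positive for all $n$. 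All the bookkeeping you flag (that $\Phi\otimes\id_n$ is a unital self-adjoint bounded section of the unital positive map $\Psi\otimes\id_n$, and that for self-adjoint arguments the numerical diameter equals $\lambda_{\max}-\lambda_{\min}$) checks out. What your route buys is an explicit intermediate conclusion --- that $\Phi$ is itself completely positive, hence a ucp bijection onto its range --- at the cost of length; what the paper's route buys is brevity and a demonstration of the force of the norm identity $\lVert\cdot\rVert_{\cb}=\lVert\cdot\rVert_{\cbsdiam}$ on self-adjoint maps. The converse direction is essentially the same in both.
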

\begin{proof}
Suppose $\|\Phi\|_{\cbsdiam} = 1$. By Theorem \ref{thm:cbSpectralDiam} this implies that $\|\Phi\|_{\cb} \leqslant 1$, that is completely contractive. By Proposition \ref{prop:cccpcexpansive} $\Phi$ is also completely expansive and thus $\Phi$ is completely isometric.

On the other hand, suppose $\Phi$ is completely isometric. By Theorem \ref{thm:cbSpectralDiam} the numerical diameter of $\Phi$ is completely bounded and so by Proposition \ref{prop:contractivediameter} $\|\Phi\|_{\cbdiam} \leqslant 1$. Lastly, Corollary \ref{cor:expansive} gives that $\|\Phi\|_{\cbsdiam}\geqslant 1$, which finishes off the proof.
\end{proof}

\subsection{Approximately isometric sections}

It is desirable to have approximate versions of the above results, that having a (completely bounded) numerical diameter close to 1 implies that the map is close to an (completely) isometric map. This seemingly has not been studied except for the bijective case, which could indicate that it is difficult, which is hardly surprising since whole books have been written about isometric maps~(cf.~\cite{FlemingJamison}). As such we summarize what is known and give some partial results in our context.

A Jordan homomorphism $\psi : \mathcal A \rightarrow \mathcal B$ between C$^*$-algebras is a linear map such that
\[
\psi(ab + ba) = \psi(a)\psi(b) + \psi(b)\psi(a), \quad \forall a,b\in \mathcal A.
\]
In particular, homomorphisms and anti-homomorphisms are Jordan homomorphisms. A Jordan $*$-homomorphism is additionally a self-adjoint map and a Jordan isomorphism is additionally bijective.

\begin{theorem}[Theorem 7, Kadison \cite{Kadison}]
Let $\psi : \mathcal A_1\rightarrow \mathcal A_2$ be an isometric bijective linear map with $\mathcal A_2\subseteq B(\mathcal H_2)$. Then there exists a Jordan isomorphism $\varphi:\mathcal A_1 \rightarrow \mathcal A_2$ and a unitary $u\in B(\mathcal H_2)$ such that
$\psi = u\phi$.
\end{theorem}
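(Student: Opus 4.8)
The plan is to normalize $\psi$ in two steps and reduce everything to the statement that a \emph{unital} surjective linear isometry between unital C$^*$-algebras is a Jordan $\ast$-isomorphism. First I would strip off the unitary. When $\mathcal A_1$ is unital, a surjective linear isometry carries the unit ball of $\mathcal A_1$ onto that of $\mathcal A_2$, hence extreme points to extreme points; using Kadison's description of the extreme points of the unit ball of a C$^*$-algebra one finds that $u := \psi(1_{\mathcal A_1})$ is a unitary in $B(\mathcal H_2)$, and since $u\in\mathcal A_2$ this forces $\mathcal A_2$ to be unital. Then $\varphi := u^\ast\psi : \mathcal A_1 \to \mathcal A_2$ is a unital surjective linear isometry with $\psi = u\varphi$, and it suffices to treat $\varphi$. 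The case that $\mathcal A_1$ is non-unital I would handle by the standard passage to the biduals $\mathcal A_i^{\ast\ast}$, which are W$^\ast$-algebras: $\psi^{\ast\ast}$ is then a weak$^\ast$-homeomorphic surjective isometry to which the unital case applies, and restricting back gives the claim, with the resulting unitary living in the multiplier algebra of $\mathcal A_2$ and hence in $B(\mathcal H_2)$.

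Next I would establish that $\varphi$ is a self-adjoint order isomorphism. Being a unital contraction, $\varphi$ is positive by Proposition~\ref{prop:poscontractive}; the same applies to $\varphi^{-1}$, which is again a unital isometry; and a positive linear map between C$^*$-algebras is self-adjoint, so $\varphi(A^\ast) = \varphi(A)^\ast$ for all $A$, and $\varphi$ restricts to an order isomorphism $(\mathcal A_1)_{sa} \to (\mathcal A_2)_{sa}$, $\varphi$ and $\varphi^{-1}$ both being positive. Since $\varphi$ is complex-linear and $\ast$-preserving, and $AB + BA = (A+B)^2 - A^2 - B^2$, the Jordan identity will follow once we know $\varphi(A^2) = \varphi(A)^2$ for every self-adjoint $A$.

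For that, I would invoke Kadison's inequality: a unital positive linear map $\Theta$ satisfies $\Theta(X)^2 \le \Theta(X^2)$ for self-adjoint $X$ (itself a short consequence of positivity, since a positive map restricted to the commutative C$^*$-algebra generated by $X$ is automatically $2$-positive). Applied to $\Theta = \varphi$ this gives $\varphi(A)^2 \le \varphi(A^2)$, and applied to $\Theta = \varphi^{-1}$ with the self-adjoint element $X = \varphi(A)$ it gives $A^2 = \varphi^{-1}(\varphi(A))^2 \le \varphi^{-1}(\varphi(A)^2)$; pushing this last inequality through the order-preserving map $\varphi$ yields $\varphi(A^2) \le \varphi(A)^2$. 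Hence $\varphi(A^2) = \varphi(A)^2$, so $\varphi$ is a Jordan $\ast$-isomorphism and $\psi = u\varphi$ is the asserted decomposition. The main obstacle is the very first step — showing that a unit is sent to a unitary (equivalently, the bidual/multiplier reduction in the non-unital case) — which is the genuine technical heart of Kadison's theorem; once that is granted, together with Proposition~\ref{prop:poscontractive} and Kadison's inequality the argument is short.
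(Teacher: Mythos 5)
First, note that the paper does not actually prove this statement: it is quoted verbatim from Kadison's 1951 paper as background for the subsection on approximately isometric sections, so there is no in-paper argument to compare yours against. Your strategy for the second half --- peel off $u = \psi(1_{\mathcal A_1})$, observe that $\varphi = u^*\psi$ and $\varphi^{-1}$ are unital surjective contractions, hence positive and self-adjoint by Proposition~\ref{prop:poscontractive}, and then squeeze $\varphi(A^2) = \varphi(A)^2$ out of the Kadison--Schwarz inequality applied to both $\varphi$ and $\varphi^{-1}$ --- is the standard modern proof of the unital case, and that half of your argument is complete and correct, including the polarization step via $AB+BA = (A+B)^2 - A^2 - B^2$ and the reduction of the Schwarz inequality to the commutative C$^*$-algebra generated by a self-adjoint element.

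The genuine gap is exactly where you suspect it, but it is worse than a deferred technicality: the justification you offer for $u = \psi(1_{\mathcal A_1})$ being unitary would fail as stated. Kadison's characterization says that the extreme points of the unit ball of a unital C$^*$-algebra $\mathcal A_2$ are the partial isometries $v$ satisfying $(1 - vv^*)\mathcal A_2(1 - v^*v) = 0$, and these need not be unitary --- the unilateral shift is an extreme point of the unit ball of $B(\ell^2)$. So the fact that a surjective isometry carries the extreme point $1_{\mathcal A_1}$ to an extreme point of the ball of $\mathcal A_2$ only tells you that $\psi(1_{\mathcal A_1})$ is such a maximal partial isometry. Upgrading this to unitarity is the actual content of Kadison's Theorem~7; his argument (and later simplifications) must use the surjectivity and linearity of $\psi$ far more heavily, e.g.\ by analyzing $v^*\psi(\cdot)$ and $\psi(\cdot)v^*$ simultaneously or by passing to the enveloping von Neumann algebra and exploiting central supports of the defect projections. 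As written, your proposal therefore establishes Kadison's theorem only modulo the hardest step of Kadison's theorem; as a map of where the difficulty lies and how the rest of the proof goes once that step is granted, it is accurate.
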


\begin{theorem}[Theorem 10 and Corollary 11, Kadison \cite{Kadison}]
An isometric self-adjoint bijective linear map $\psi : \mathcal A_1\rightarrow \mathcal A_2$ between C$^*$-algebras is the sum of a $*$-isomorphism and a $*$-anti-isomorphism. 

Moreover, if $\mathcal A_1 = B(\mathcal H_1)$ and $\mathcal A_2 = B(\mathcal H_2)$ then there exists a unitary $u\in B(H_1,H_2)$ such that
\[
\psi(a) = uau^*, \forall a\in B(\mathcal H_1) \quad \textrm{or} \quad \psi(a) = ua^tu^*, \forall a\in B(\mathcal H_1)
\]
\end{theorem}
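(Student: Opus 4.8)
The plan is to obtain the first assertion from the previous theorem (Kadison's Theorem 7) together with the classical decomposition of Jordan homomorphisms, and then to specialise to $B(\mathcal H_i)$. By that theorem we may write $\psi = v\varphi$ with $\varphi : \mathcal A_1 \to \mathcal A_2$ a Jordan isomorphism and $v \in B(\mathcal H_2)$ a unitary. First I would use self-adjointness of $\psi$ to pin down this data: expanding $\psi(x^*) = \psi(x)^*$ in terms of $v$ and $\varphi$ and letting $x$ range over $\mathcal A_1$ (so that $\varphi(x^*)$ ranges over all of $\mathcal A_2$, by surjectivity), one finds that $v$ is forced to be a self-adjoint unitary in the centre of $\mathcal A_2$ and that $\varphi$ is then a Jordan $*$-isomorphism. (Equivalently, one may take $v := \psi(1_{\mathcal A_1})$, verify directly that it is a central symmetry, and replace $\psi$ by $x \mapsto v\psi(x)$ to reduce to the unital case.)

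Next I would invoke the structural decomposition of Jordan $*$-isomorphisms of one C$^*$-algebra onto another: there is a central projection $e$ (readily located, e.g.\ by passing to the enveloping von Neumann algebra, where the map becomes a normal Jordan $*$-isomorphism) such that $x \mapsto e\,\varphi(x)$ is a $*$-homomorphism, $x \mapsto (1-e)\,\varphi(x)$ is a $*$-anti-homomorphism, and their sum is $\varphi$. Since $\varphi$ (hence $\psi$) is bijective, these two maps are a $*$-isomorphism onto $e\mathcal A_2$ and a $*$-anti-isomorphism onto $(1-e)\mathcal A_2$; reinstating the central unitary $v$, which only permutes the two summands by a sign, exhibits $\psi$ as the sum of a $*$-isomorphism and a $*$-anti-isomorphism. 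I expect this decomposition --- producing the central projection and verifying the homomorphism/anti-homomorphism split --- to be the principal obstacle; it rests on the Jordan triple identity $\varphi(xyx) = \varphi(x)\varphi(y)\varphi(x)$ and a careful bookkeeping of which products $\varphi$ preserves and which it reverses.

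For the ``moreover'', suppose $\mathcal A_1 = B(\mathcal H_1)$ and $\mathcal A_2 = B(\mathcal H_2)$. The centre of $\mathcal A_2$ is $\mathbb C 1$, so $e \in \{0,1\}$ and $v \in \{\pm 1\}$, the sign being harmless; thus $\psi$ is either a $*$-isomorphism or a $*$-anti-isomorphism of $B(\mathcal H_1)$ onto $B(\mathcal H_2)$. A $*$-isomorphism between these is spatial --- it intertwines two irreducible representations of $B(\mathcal H_1)$, on $\mathcal H_1$ and on $\mathcal H_2$, which are unitarily equivalent --- so there is a unitary $u \in B(\mathcal H_1, \mathcal H_2)$ with $\psi(a) = uau^*$. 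In the anti-isomorphism case, precompose with the transpose $a \mapsto a^t$ with respect to a fixed orthonormal basis of $\mathcal H_1$, which is a $*$-anti-automorphism of $B(\mathcal H_1)$; then $a \mapsto \psi(a^t)$ is a $*$-isomorphism, hence of the form $a \mapsto uau^*$ for a unitary $u \in B(\mathcal H_1,\mathcal H_2)$, which upon replacing $a$ by $a^t$ gives $\psi(a) = u a^t u^*$. The remaining points --- that the data $(v,\varphi)$ has the claimed properties and that $*$-isomorphisms of the $B(\mathcal H_i)$ are spatial --- are routine.
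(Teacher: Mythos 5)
The paper offers no proof of this statement: it is quoted verbatim from Kadison's \emph{Isometries of operator algebras} (Theorem 10 and Corollary 11), so there is no internal argument to compare against. Your sketch follows what is essentially Kadison's own route --- reduce via the preceding theorem to a Jordan $*$-isomorphism, split that by a central projection into a $*$-homomorphism plus a $*$-anti-homomorphism, then use triviality of the centre of $B(\mathcal H_2)$ and spatiality of $*$-isomorphisms of $B(\mathcal H_1)$ onto $B(\mathcal H_2)$ --- and most of it is sound.

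There is, however, one genuine gap: the treatment of the central symmetry $v=\psi(1_{\mathcal A_1})$. You assert that $v$ ``only permutes the two summands by a sign'' and, in the $B(\mathcal H)$ case, that ``the sign is harmless.'' It is not: $-1$ times a $*$-isomorphism is neither a $*$-isomorphism nor a $*$-anti-isomorphism (it reverses positivity and destroys multiplicativity), so a nontrivial central symmetry cannot be absorbed. Indeed, as literally stated, without a unitality hypothesis, the theorem is false: $\psi(a)=-a$ on $B(\mathcal H)$ is isometric, self-adjoint and bijective, but is not of the form $uau^*$ or $ua^tu^*$, nor a sum of a $*$-isomorphism and a $*$-anti-isomorphism. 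Kadison's Theorem~10 concerns ``C$^*$-isomorphisms,'' which are unital by definition; the intended reading of the statement includes $\psi(1)=1$, whence $v=1$ and your argument goes through. You should either add that hypothesis explicitly or prove $v=1$, rather than dismiss the sign. Two smaller points: the paper's version of Kadison's Theorem~7 only asserts that $\varphi$ is a Jordan isomorphism, so your derivation of centrality of $v$ from $v\varphi(x^*)=\varphi(x)^*v$ needs the additional fact (contained in Kadison's actual Theorem~7) that $\varphi$ is adjoint-preserving; and in the spatiality step one should observe that a $*$-isomorphism $B(\mathcal H_1)\to B(\mathcal H_2)$ is automatically normal, since $B(\mathcal H)$ also has irreducible representations (factoring through the Calkin algebra) that are not unitarily implemented.
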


Note that the second conclusion is true for the broader class of factors, but we use the version above for its more relevant context.

\begin{corollary}
A completely isometric self-adjoint bijective linear map $\psi : \mathcal A_1\rightarrow \mathcal A_2$ between C$^*$-algebras is a $*$-isomorphism. 

Moreover, if $\mathcal A_1 = B(\mathcal H_1)$ and $\mathcal A_2 = B(\mathcal H_2)$ then there exists a unitary $u\in B(H_1,H_2)$ such that
\[
\psi(a) = uau^*, \quad \forall a\in B(\mathcal H_1).
\]
\end{corollary}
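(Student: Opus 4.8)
The plan is to derive the corollary from the two preceding theorems of Kadison, the role of the word ``completely'' being to eliminate the anti-isomorphism (resp.\ transpose) alternatives those theorems leave open. First I would record that $\psi$ is unital: by the preceding theorem it is the sum of a $*$-isomorphism and a $*$-anti-isomorphism, each of which sends a unit to a unit, so $\psi(1_{\mathcal A_1}) = 1_{\mathcal A_2}$. Being a complete isometry, $\psi$ is a unital complete contraction, and since it is bijective its inverse is again a unital complete isometry (each inflation $\psi \otimes \id_n$ is a bijective isometry, so $\psi^{-1}\otimes\id_n$ is isometric). Hence by Proposition~\ref{prop:cccpcexpansive} both $\psi$ and $\psi^{-1}$ are unital completely positive maps; in particular both are $2$-positive and so obey the Kadison--Schwarz inequality $\Phi(a)^*\Phi(a) \le \Phi(a^*a)$.

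Next I would cancel the two Kadison--Schwarz inequalities against one another. For $a \in \mathcal A_1$ we have $\psi(a)^*\psi(a) \le \psi(a^*a)$; applying the same inequality for $\psi^{-1}$ to the element $\psi(a)$ and then applying the positive (hence order preserving) map $\psi$ yields the reverse inequality $\psi(a)^*\psi(a) \ge \psi(a^*a)$. So $\psi(a^*a) = \psi(a)^*\psi(a)$ for every $a$, and replacing $a$ by $a^*$ (using $\psi(a^*) = \psi(a)^*$) also gives $\psi(aa^*) = \psi(a)\psi(a)^*$. Thus the multiplicative domain of $\psi$ is all of $\mathcal A_1$, so $\psi$ is multiplicative, hence --- being unital, self-adjoint, and bijective --- a $*$-isomorphism. (Alternatively, one can argue from the Kadison decomposition itself: the $*$-anti-isomorphism summand $\sigma$ is a compression of $\psi$ to a direct summand and so completely positive, its inverse is likewise a compression of $\psi^{-1}$ and so completely positive, and the same cancellation forces $\sigma(a)^*\sigma(a) = \sigma(a^*a) = \sigma(a)\sigma(a)^*$; hence every element in the range of $\sigma$ is normal, the range is therefore commutative, and on a commutative C$^*$-algebra a $*$-anti-isomorphism is a $*$-isomorphism, so once more $\psi$ is a $*$-isomorphism.)

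Finally, for $\mathcal A_i = B(\mathcal H_i)$ I would invoke Corollary~11 of Kadison as quoted above: there is a unitary $u \in B(\mathcal H_1, \mathcal H_2)$ with $\psi(a) = uau^*$ for all $a$, or $\psi(a) = ua^t u^*$ for all $a$. The latter form is anti-multiplicative, so by the first part it can occur only when $B(\mathcal H_1)$ is commutative, i.e.\ $\dim\mathcal H_1 = 1$, in which case the two forms agree. (Equivalently: transpose is not $2$-positive once $\dim \mathcal H_1 \ge 2$, whereas $\psi$ is.) Hence $\psi(a) = uau^*$ for all $a \in B(\mathcal H_1)$.

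I expect the main obstacle to be the middle paragraph: confirming that the two Kadison--Schwarz inequalities genuinely cancel to an equality, and that equality for \emph{every} $a$ puts $\psi$ onto its full multiplicative domain (the Choi multiplicative-domain argument) so that $\psi$ is a homomorphism --- together with the small but essential point that both $\psi$ \emph{and} $\psi^{-1}$ are unital, which is exactly where self-adjointness is used, since a surjective isometry of C$^*$-algebras in general preserves the unit only up to a unitary.
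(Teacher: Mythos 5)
Your argument is correct. The paper offers no proof of this corollary, treating it as immediate from the two quoted Kadison theorems: the intended reading is simply that the $*$-anti-isomorphism summand (equivalently, the $ua^tu^*$ branch of Corollary~11) cannot survive the hypothesis of \emph{complete} isometry except on a commutative piece, e.g.\ because the transpose on $M_n(\mathbb C)$ has completely bounded norm $n$ (Tomiyama, as used in Example~\ref{ex:growingcbdiam}), or because it fails to be $2$-positive. Your main route is genuinely different and more self-contained: you upgrade $\psi$ and $\psi^{-1}$ to unital completely positive maps via Proposition~\ref{prop:cccpcexpansive}, cancel the two Kadison--Schwarz inequalities to obtain $\psi(a^*a)=\psi(a)^*\psi(a)$ (and the $aa^*$ version) for every $a$, and conclude multiplicativity from Choi's multiplicative-domain theorem. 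This proves the first sentence without ever invoking the Jordan decomposition, using Kadison only for unitality and for the explicit $B(\mathcal H)$ form at the end; the paper's implicit argument, by contrast, leans entirely on Kadison's structure theorems and then discards one branch. One caveat, which you rightly flag as the essential point: unitality is not automatic for a self-adjoint surjective complete isometry (the map $a\mapsto -a$ is completely isometric, self-adjoint and bijective but not multiplicative), so your extraction of $\psi(1_{\mathcal A_1})=1_{\mathcal A_2}$ is doing real work and is only as strong as the quoted form of Kadison's Theorem~10, whose precise statement carries a unitality (C$^*$-isomorphism) hypothesis that is implicit in the paper's quotation and hence in the corollary itself. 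This is a latent imprecision inherited from the paper rather than a gap in your reasoning.
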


Now we can turn to the approximate versions of Kadison's results.
An $\epsilon$-approximate Jordan homomorphism is a linear map $\psi: \mathcal A \rightarrow \mathcal B$ such that
\[
\|\psi(ab) + \psi(ba) - \psi(a)\psi(b) - \psi(b)\psi(a)\| \,\leqslant\, \epsilon\|a\|\|b\|
\]
for all $a,b\in \mathcal A$. It is called an $\epsilon$-approximate Jordan $*$-homomorphism if it additionally satisfies
\[
\|\psi(a^*) - \psi(a)^*\| \,\leqslant\, \epsilon \|a\|
\]
for all $a\in \mathcal A$. As usual, isomorphism will indicate bijectivity.

\begin{theorem}[Theorem 2.12, Ili\v{s}evi\'c and Turn\v{e}k \cite{IT}]
Let $\mathcal A_1,\mathcal A_2$ be C$^*$-algebras such $K(\mathcal H_i) \subseteq \mathcal A_i\subseteq B(\mathcal H_i), i=1,2$. If $\psi : \mathcal A_1 \rightarrow \mathcal A_2$ is an $\epsilon$-approximate Jordan $*$-isomorphism with $\epsilon \in [0,10^{-6}]$ and $\|\psi^{-1}\| < \frac{1}{4\epsilon}$ (with the bound $\infty$ if $\epsilon = 0$), then there exists a unitary $u\in B(\mathcal H_1,\mathcal H_2)$ and a bound
\[
c(\epsilon) = \left(\sqrt 5 + \sqrt[4]{10}\right)^2(3+2\epsilon)\left(\frac{\epsilon}{1-3\epsilon}\left(20 + 50\epsilon + 56\epsilon^2 + 24\epsilon^3\right)\right)^{\frac{1}{2}} + 13\epsilon + \frac{49}{2}\epsilon^2 + \frac{23}{2}\epsilon^3
\]
such that
\begin{align*}
\|\psi(a) - uau^*\| & \leqslant c(\epsilon)\|a\|, \ \forall a\in \mathcal A_1, \quad \textrm{or} \\
\|\psi(a) - ua^tu^*\| & \leqslant c(\epsilon)\|a\|, \ \forall a\in \mathcal A_1 \,.
\end{align*}
\end{theorem}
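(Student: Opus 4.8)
The plan is to reduce this quantitative statement to Kadison's exact results quoted just above. Concretely, the aim is to produce, from the given $\epsilon$-approximate Jordan $*$-isomorphism $\psi$, a genuine surjective linear isometry $\phi : \mathcal A_1 \to \mathcal A_2$ with $\lVert \psi - \phi \rVert \le c(\epsilon)$; then Kadison's Theorem 10 and Corollary 11 (in the $B(\mathcal H)$ setting) give a unitary $u \in B(\mathcal H_1,\mathcal H_2)$ with either $\phi(a) = uau^*$ for all $a$ or $\phi(a) = ua^tu^*$ for all $a$, and the triangle inequality $\lVert \psi(a) - uau^* \rVert \le \lVert (\psi - \phi)(a) \rVert \le c(\epsilon)\lVert a \rVert$ yields the conclusion. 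Equivalently, one can aim for a genuine Jordan $*$-isomorphism near $\psi$, which by Kadison's Corollary 11 is a sum of a $*$-isomorphism and a $*$-anti-isomorphism and hence (on $B(\mathcal H)$) spatially implemented; the two formulations are interchangeable, and in either case the whole content of the theorem is the construction of the nearby exact object together with explicit control on the distance.

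The first group of estimates should show that $\psi$ is an \emph{approximate} isometry. Putting $b = a$ in the defining inequality gives $\lVert \psi(a^2) - \psi(a)^2 \rVert \le \epsilon \lVert a \rVert^2$; for self-adjoint $a$ one iterates this to compare $\psi(a^n)$ with $\psi(a)^n$, hence the spectra of $a$ and $\psi(a)$, which (using also $\lVert \psi(a^*) - \psi(a)^* \rVert \le \epsilon \lVert a \rVert$ to pass from self-adjoint to arbitrary elements) bounds $\lVert \psi(a) \rVert$ above by roughly $\lVert a \rVert$. The matching lower bound is exactly where the hypothesis $\lVert \psi^{-1} \rVert < \tfrac{1}{4\epsilon}$ enters: $\psi^{-1}$ is itself an approximate Jordan $*$-homomorphism, with a constant controlled by $\epsilon$ and $\lVert \psi^{-1} \rVert$, so the same analysis applied to $\psi^{-1}$ gives $\lVert \psi(a) \rVert \ge \bigl(1 - O(\epsilon \lVert \psi^{-1} \rVert)\bigr)\lVert a \rVert$. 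The smallness requirements $\epsilon \in [0, 10^{-6}]$ and $\epsilon \lVert \psi^{-1} \rVert < \tfrac14$ are precisely what keep all accumulated errors strictly below $1$, so that $\psi$ and $\psi^{-1}$ are bounded with norms close to $1$; this also makes $\psi^{**} : \mathcal A_1^{**} \to \mathcal A_2^{**}$ a near-isometric bijection of the biduals, the convenient setting for the next step.

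The heart of the proof — and the step I expect to be the main obstacle — is upgrading the near-isometry $\psi$ to a genuine surjective isometry (equivalently, a genuine Jordan $*$-isomorphism) $\phi$ within an \emph{explicitly computed} distance $c(\epsilon)$, with $c(\epsilon) \to 0$ as $\epsilon \to 0$. I would attempt this perturbatively in the bidual: first normalize by an approximately unitary correction, for instance replacing $\psi$ by $a \mapsto w^* \psi(a)$ where $w \in \mathcal A_2^{**}$ is the approximate unitary extracted from $\psi^{**}(1)$, so that the corrected map is approximately unital; then repair the residual Jordan-multiplicativity defect by a Newton-type iteration (or an averaging/rigidity argument exploiting the Jordan triple product), whose convergence is guaranteed by the smallness of $\epsilon$ and whose limit is a genuine Jordan $*$-isomorphism. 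Each stage — the unitary normalization, the iteration, and the final application of Kadison's spatial theorem — contributes one of the factors visible in $c(\epsilon)$: the prefactor $(\sqrt 5 + \sqrt[4]{10})^2$, the terms $(3 + 2\epsilon)$ and $\tfrac{\epsilon}{1-3\epsilon}(20 + 50\epsilon + 56\epsilon^2 + 24\epsilon^3)$ under the square root, and the polynomial tail $13\epsilon + \tfrac{49}{2}\epsilon^2 + \tfrac{23}{2}\epsilon^3$. So the genuine difficulty is not conceptual but a matter of disciplined bookkeeping: tracking every constant through the iteration, verifying that the threshold $\lVert \psi^{-1} \rVert < \tfrac{1}{4\epsilon}$ is exactly what makes the scheme contract, and confirming that $c(\epsilon) \to 0$. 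Once $\phi$ has been produced, Kadison's Theorem 10 and Corollary 11 applied to $\phi$ give the spatial form $uau^*$ or $ua^tu^*$, and the triangle inequality from the first paragraph completes the proof.
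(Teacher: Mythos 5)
First, a point of order: the paper does not prove this statement at all --- it is imported verbatim as Theorem 2.12 of the cited reference \cite{IT} --- so there is no internal proof to compare your attempt against; your proposal has to stand on its own as a proof of the quoted theorem.

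On its own terms, your strategy (show $\psi$ is a near-isometry, upgrade it to an exact Jordan $*$-isomorphism $\phi$ within distance $c(\epsilon)$, then apply a Kadison-type spatial implementation theorem and the triangle inequality) is the right architecture, but what you have written is an outline, not a proof. The entire content of the theorem is the explicit constant $c(\epsilon)$, and the step that produces it --- your ``Newton-type iteration (or averaging/rigidity argument)'' converging to a genuine Jordan $*$-isomorphism, with every constant tracked --- is precisely the step you defer, by your own admission. Asserting that the hypothesis $\|\psi^{-1}\| < \frac{1}{4\epsilon}$ ``is exactly what makes the scheme contract'' is a hope, not an argument: nothing in the proposal shows that any such iteration is well defined, converges, or has its limit within the stated distance, and the specific shape of $c(\epsilon)$ (the square-root dependence on $\epsilon$, the prefactor $(\sqrt 5 + \sqrt[4]{10})^2$) is not derived from anything. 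There is also a secondary gap: Kadison's spatial theorem as quoted in the paper gives a unitary implementation only for maps between $B(\mathcal H_1)$ and $B(\mathcal H_2)$, whereas here the algebras satisfy only $K(\mathcal H_i) \subseteq \mathcal A_i \subseteq B(\mathcal H_i)$. To obtain a single unitary $u \in B(\mathcal H_1,\mathcal H_2)$ with $\phi(a) = uau^*$ or $\phi(a) = ua^t u^*$ one needs the irreducibility supplied by the presence of the compacts, together with a Herstein-type structure theorem to exclude a nontrivial mixture of the $*$-isomorphism and $*$-anti-isomorphism summands; your reduction ``to the $B(\mathcal H)$ setting'' silently skips this. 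Neither objection says the approach is wrong --- it is essentially the approach of \cite{IT} --- but both the quantitative perturbation step and the spatial-implementation step must actually be carried out before this is a proof.
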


\begin{theorem}[Theorem 3.4, Christensen \cite{Christensen}]
Let $\psi : B(\mathcal H_1) \rightarrow B(\mathcal H_2)$ be a completely positive, bijective linear map. If $t \in [0, 1/84]$ with $\|\psi\|\leqslant 1$ and $\|\psi^{-1}\| \leqslant 1+t$, then there exists a unitary $u\in B(\mathcal H_1,\mathcal H_2)$ such that
\[
\| \psi(a) - uau^*\| \leqslant (8.5t^{1/2} + 7t)\|a\|, \ \forall a\in B(\mathcal H_1).
\]
\end{theorem}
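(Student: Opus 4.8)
The plan is to treat this as a quantitative, completely-positive refinement of Kadison's theorem on isometric bijections, and to follow that template in four moves: normalise $\psi$ so that it is unital; show the normalised map is a small perturbation of a Jordan $*$-homomorphism; identify that Jordan $*$-homomorphism, using the rigidity of $B(\mathcal H)$, as conjugation by a unitary possibly composed with the transpose, and discard the transpose alternative using complete positivity; then re-assemble the estimates, tracking constants to land on $8.5\,t^{1/2}+7t$.

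\emph{Unitalisation.} First I would show $\psi(1)$ is close to $1$. Since $\psi$ is completely positive with $\|\psi\|=\|\psi(1)\|\le 1$, we have $0\le\psi(1)\le 1$; and for a unit vector $\xi\in\mathcal H_2$, the positive functional $x\mapsto\langle\psi(x)\xi,\xi\rangle$ on $B(\mathcal H_1)$ has norm $\langle\psi(1)\xi,\xi\rangle$ and sends $a_\xi:=\psi^{-1}(\xi\xi^*)$ to $1$, while $\|a_\xi\|\le\|\psi^{-1}\|\le 1+t$; hence $1\le(1+t)\langle\psi(1)\xi,\xi\rangle$ for every unit $\xi$, i.e. $\psi(1)\ge(1+t)^{-1}$ and $\|1-\psi(1)\|\le t$. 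Replacing $\psi$ by $\psi_0(a)=\psi(1)^{-1/2}\psi(a)\psi(1)^{-1/2}$ gives a unital completely positive map with $\|\psi_0-\psi\|=O(t)$; since $\psi_0^{-1}(b)=\psi^{-1}\!\big(\psi(1)^{1/2}b\,\psi(1)^{1/2}\big)$ and $\|\psi(1)\|\le 1$ we keep $\|\psi_0^{-1}\|\le 1+t$, and as $\psi_0(1)=1$ we get $\psi_0^{-1}(1)=1$ as well. So $\psi_0:B(\mathcal H_1)\to B(\mathcal H_2)$ is a unital completely positive bijection that is contractive and $(1+t)$-co-contractive; it suffices to prove the theorem for $\psi_0$ with constant $8.5\,t^{1/2}+O(t)$ and then transfer back.

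\emph{Near-rigidity.} Using that $\psi_0$ is a \emph{surjective} near-isometry --- so that projections are sent to near-projections, unitaries to near-unitaries, and the Kadison--Schwarz defect $\psi_0(a^*a)-\psi_0(a)^*\psi_0(a)\ge 0$ is small (here surjectivity of $\psi_0$, hence near-isometry of $\psi_0^{-1}$, is essential: it fails for arbitrary near-isometric unital completely positive maps) --- I would argue that $\psi_0$ is an $\epsilon$-approximate Jordan $*$-homomorphism with $\epsilon=O(t^{1/2})$, the square root being the usual loss in converting a norm estimate into an inner-product estimate. A Jordan $*$-isomorphism of $B(\mathcal H_1)$ onto $B(\mathcal H_2)$ is, by Kadison's theorem, $a\mapsto uau^*$ or $a\mapsto ua^{t}u^*$ for some unitary $u\in B(\mathcal H_1,\mathcal H_2)$; the task is then to produce $u$ explicitly --- e.g. from the polar decomposition of a partial isometry extracted from a Stinespring dilation of $\psi_0$, or by averaging $\psi_0$ over the unitary group --- and to estimate $\|\psi_0-(\text{that map})\|=O(t^{1/2})$ with a \emph{small} constant; simply invoking a black-box stability theorem for approximate Jordan $*$-homomorphisms (such as the Ili\v{s}evi\'c--Turn\v{e}k bound recorded above) would only give a $t^{1/4}$ rate with enormous constants, so a direct argument exploiting the type-$\mathrm{I}$ structure is required.

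\emph{Removing the transpose, and assembly.} Complete positivity discards the transpose branch: the transpose is positive but not $2$-positive, so $a\mapsto ua^{t}u^*$ sends a fixed $4\times4$ maximally entangled positive matrix (a witness to non-$2$-positivity of the transpose) to an operator with an eigenvalue near $-1$; since $\psi_0$ is completely positive and, in particular, $\psi_0\otimes\id_2$ applied to that witness is $O(t^{1/2})$-close to the same operator, for $t\le 1/84$ this contradicts $(\psi_0\otimes\id_2)\ge 0$. Hence $\|\psi_0(a)-uau^*\|=O(t^{1/2})\|a\|$; undoing the unitalisation and collecting the $O(t^{1/2})$ and $O(t)$ contributions, with the constants in each estimate optimised, yields $\|\psi(a)-uau^*\|\le(8.5\,t^{1/2}+7t)\|a\|$. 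The hard part is entirely quantitative: the two crux estimates are (i) ``unital completely positive bijection with $(1+t)$-bounded inverse $\Rightarrow$ Jordan defect $\le c\,t^{1/2}$'' with a good constant $c$, which must use the surjectivity carefully, and (ii) the explicit construction of $u$ together with a direct $\|\psi_0-\mathrm{Ad}\,u\|\le c'\,t^{1/2}$ bound, since off-the-shelf rigidity is far too lossy; I expect nearly all of the work to live in these two estimates.
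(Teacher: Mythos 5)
First, a point of order: the paper offers no proof of this statement. It is quoted (in a form specialised to $B(\mathcal H)$) from Christensen's 1977 paper on perturbations of operator algebras and is used as a black box to derive the corollary that follows it; the remark after the theorem even notes how the authors have adapted Christensen's original phrasing. So there is no internal argument to compare yours against, and the question is whether your sketch stands on its own as a proof.

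It does not, as you yourself concede. The unitalisation step is correct and essentially complete: the lower bound $\psi(1)\ge(1+t)^{-1}1_{\mathcal H_2}$ obtained from the vector states together with $\|\psi^{-1}\|\le 1+t$ is a genuine argument, and the passage to $\psi_0(a)=\psi(1)^{-1/2}\psi(a)\psi(1)^{-1/2}$ with the stated properties checks out. The observation that complete positivity eliminates the transpose branch of the Kadison dichotomy is also sound in outline (the factor lost in passing to $\psi_0\otimes\id_2$ is harmless at the $O(t^{1/2})$ level). But the two estimates you label (i) and (ii) --- that a unital completely positive bijection with $(1+t)$-bounded inverse has Jordan defect $O(t^{1/2})$ with a controlled constant, and that one can construct the implementing unitary $u$ with a direct bound $\|\psi_0-u(\cdot)u^*\|\le c't^{1/2}$ rather than through a lossy black-box stability theorem --- are the entire content of the theorem. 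A statement of the form ``$\|\psi(a)-uau^*\|\le(8.5t^{1/2}+7t)\|a\|$ for $t\le 1/84$'' \emph{is} its constants; an argument that delivers ``$O(t^{1/2})$ with constants to be optimised'' proves a different, qualitative statement. Your roadmap is broadly consistent with how Christensen actually proceeds (Kadison--Schwarz defect estimates plus an averaging construction of the implementing unitary), but the proof is absent precisely where you write that you expect nearly all of the work to live. That self-assessment is accurate: it does, and it is not there. The appropriate course, and the one the paper takes, is to cite Christensen rather than to present this outline as a proof.
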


Note that Christensen's original result says ``isomorphism'' but this automatically implies $*$-isomorphism, which are all inner in the situation above.
This theorem also holds for a more general class of von Neumann algebras but we have reduced to the most significant case for our paper.

\begin{corollary}
Let $\Phi : B(\mathcal H) \rightarrow B(\mathcal H)$ be a unital self-adjoint bounded section of a unital completely positive map. If $\epsilon \in [0,1/84]$ and $\|\Phi\|_{\cbsdiam} \leqslant 1 + \epsilon$, then there exists $u\in B(\mathcal H)$ and a map $U(x) = uxu^*$ such that
\[
\|\Phi - U\|_{\cb} \leqslant 8.5\:\!\epsilon^{1/2} + 7\epsilon  \;.
\]
\end{corollary}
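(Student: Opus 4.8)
The plan is to verify that $\Phi$ --- or, more precisely, its inverse on its range --- meets the hypotheses of the theorem of Christensen quoted above, and then to propagate Christensen's operator-norm estimate to a completely bounded estimate on $\Phi$ itself.

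First I would collect the two structural facts about $\Phi$ that bring us near Christensen's setting. Since $\Phi$ is self-adjoint, part~\parit{ii} of Theorem~\ref{thm:cbSpectralDiam} gives $\|\Phi\|_{\cb} = \|\Phi\|_{\cbsdiam} \le 1+\epsilon$; and since $\Phi$ is a completely bounded section of a unital completely positive map, Proposition~\ref{prop:cccpcexpansive} shows that $\Phi$ is completely expansive, so $1 \le \|\Phi\|_{\cb} \le 1+\epsilon$. In particular $\Phi$ is injective with closed range $\mathcal T = \Phi(B(\mathcal H))$, and $\Phi^{-1}\colon \mathcal T \to B(\mathcal H)$ is unital and completely contractive, hence completely positive by Proposition~\ref{prop:cccpcexpansive} (equivalently Proposition~\ref{prop:poscontractive}); by Arveson's theorem it extends to a unital completely positive map $\tilde\Psi\colon B(\mathcal H)\to B(\mathcal H)$ with $\tilde\Psi\circ\Phi = \id$.

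Next I would pass to the bijective situation demanded by Christensen's theorem. When $\mathcal H$ is finite dimensional this is free: $\tilde\Psi$ is surjective, hence bijective, hence equal to $\Phi^{-1}$. In general one must argue that a unital self-adjoint completely expansive section whose $\cb$-norm is this close to $1$ cannot land in a proper subsystem, so that $\mathcal T = B(\mathcal H)$ and $\Phi$ is bijective; this reduction is the first delicate point, and it is natural to suspect that some non-degeneracy hypothesis (automatic in finite dimensions) is really needed here. Granting it, put $\psi = \Phi^{-1}$: this is a bijective completely positive map with $\|\psi\|\le\|\psi\|_{\cb} = 1$ and $\|\psi^{-1}\| = \|\Phi\|\le\|\Phi\|_{\cb}\le 1+\epsilon$, so Christensen's theorem applies with $t = \epsilon\in[0,1/84]$, producing a unitary $u$ with $\bigl\|\Phi^{-1}(a) - uau^*\bigr\|\le\bigl(8.5\epsilon^{1/2}+7\epsilon\bigr)\|a\|$ for every $a\in B(\mathcal H)$.

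Finally I would transfer this to $\Phi$ and promote it to the completely bounded norm. Writing $U(x) = u^*xu$ and $\mathrm{Ad}_u(x) = uxu^*$, the identity $\Phi - U = U\circ(\mathrm{Ad}_u - \Phi^{-1})\circ\Phi$ turns the estimate for $\Phi^{-1}$ into the desired estimate for $\Phi$ (up to the harmless factor $\|\Phi\|_{\cb}\le 1+\epsilon$). For the completely bounded version one applies Christensen's theorem to $\Phi^{-1}\otimes\id_n$ for each $n$ --- its hypotheses hold with the same $t = \epsilon$ since $\|(\Phi^{-1})\otimes\id_n\|\le 1$ and $\|\Phi\otimes\id_n\| \le \|\Phi\|_{\cb}\le 1+\epsilon$ --- and uses that $B(\mathcal H)$ is a factor, so that the implementing unitary is unique up to a scalar and may be taken to be $u\otimes I_n$; taking the supremum over $n$ then gives $\|\Phi - U\|_{\cb}\le 8.5\epsilon^{1/2}+7\epsilon$. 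The main obstacle is precisely this last passage: Christensen's conclusion is stated only in operator norm, so the real work is checking that the perturbation can be chosen uniformly across the amplifications (equivalently, that the approximating unitary is essentially canonical), together with the bijectivity reduction of the preceding step.
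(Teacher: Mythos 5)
Your proposal follows essentially the same route as the paper, whose entire proof reads: ``This is an immediate consequence of Christensen's Theorem, Theorem~\ref{thm:cbSpectralDiam}, and Proposition~\ref{prop:optorNorm-sdiam-ineqs}.'' You are in fact more careful than the paper: the two difficulties you flag --- reducing to the bijective situation that Christensen's theorem requires, and upgrading his operator-norm estimate to a completely bounded one uniformly over all amplifications --- are genuinely left unaddressed by that one-line proof as well (the paper's subsequent remark, that $\Phi$ is ``an inverse of a ucp map,'' suggests the authors are implicitly assuming bijectivity). One bookkeeping point worth noting: transferring Christensen's bound from $\Phi^{-1}$ to $\Phi$ via $\Phi - U = U\circ(\mathrm{Ad}_u - \Phi^{-1})\circ\Phi$ costs a factor of $\|\Phi\|_{\cb}\le 1+\epsilon$, so this argument yields $(1+\epsilon)(8.5\epsilon^{1/2}+7\epsilon)$ rather than the stated $8.5\epsilon^{1/2}+7\epsilon$; that small discrepancy is not resolved in the paper either, so it is a blemish shared by both arguments rather than an error you introduced.
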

\begin{proof}
This is an immediate consequence of Christensen's Theorem, Theorem \ref{thm:cbSpectralDiam}, and Proposition \ref{prop:optorNorm-sdiam-ineqs}.
\end{proof}

The reader will notice that the previous corollary has many unneeded words, $\Phi$ automatically is unital, self-adjoint, and bounded by virtue of being an inverse of a ucp map. However, the main goal is to formulate the type of general result that we would hope for. 

We conclude this section with a complementary bound, using the self-adjoint numerical diameter of the inverse of a bijective UCP map $\Psi$, to establish a bound for $\Psi$ away from all isometries:

\begin{proposition}
    Let $\Psi : B(\mathcal H) \rightarrow B(\mathcal H)$ be a UCP bijection, and let $\Phi = \Psi^{-1}$.
    Suppose that $\lVert \Phi \rVert_{\sdiam} \ge 1 + \epsilon$ for $\epsilon \ge 0$.
    Then for all maps $U(x) = uxu^*$ for $u : \mathcal H \to \mathcal H$ unitary, we have $\lVert \Psi - U \rVert_{\cb} \ge 2\epsilon (1+\epsilon)^{-1}$.
\end{proposition}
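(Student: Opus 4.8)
The plan is to combine the reverse triangle inequality for the numerical-diameter seminorm with the fact that a unitary conjugation preserves numerical diameters. We may assume $\epsilon > 0$. Since $\lVert \Phi \rVert_{\sdiam} \ge 1 + \epsilon$, for every $\delta \in (0,\epsilon)$ we may choose a self-adjoint $A \in B(\mathcal H) \setminus \mathbb C\cdot 1$ that, after rescaling, has $\lVert A \rVert_{\diam} = 1$ and $\lVert \Phi(A) \rVert_{\diam} > 1 + \epsilon - \delta$. Put $B = \Phi(A)$, so that $\Psi(B) = A$; this $B$ is the operator I would test the map $\Psi - U$ against.

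Two observations prepare the estimate. First, because $U(x) = uxu^*$ with $u$ unitary, $W(U(x)) = W(x)$ for all $x$, hence $\lVert U(B) \rVert_{\diam} = \lVert B \rVert_{\diam}$. Second, $\Psi - U$ is a self-adjoint map (both $\Psi$ and $U$ are self-adjoint) and it is paraunital, since $(\Psi - U)(1) = \Psi(1) - U(1) = 1 - uu^* = 0$; thus Theorem~\ref{thm:cbSpectralDiam}\parit{ii} applies to $\Psi - U$. Now the seminorm property of $\lVert \cdot \rVert_{\diam}$ (Proposition~\ref{prop:diamseminorm}) yields
\[
    \bigl\lVert (\Psi - U)(B) \bigr\rVert_{\diam}
    \;\ge\;
    \lVert U(B) \rVert_{\diam} - \lVert \Psi(B) \rVert_{\diam}
    \;=\;
    \lVert B \rVert_{\diam} - \lVert A \rVert_{\diam}
    \;=\;
    \lVert B \rVert_{\diam} - 1 ,
\]
and dividing by $\lVert B \rVert_{\diam} > 1 + \epsilon - \delta$ gives
\[
    \lVert \Psi - U \rVert_{\diam}
    \;\ge\;
    1 - \frac{1}{\lVert B \rVert_{\diam}}
    \;>\;
    1 - \frac{1}{1 + \epsilon - \delta}\,.
\]
Letting $\delta \downarrow 0$ produces $\lVert \Psi - U \rVert_{\diam} \ge 1 - (1+\epsilon)^{-1}$, and then, since $\Psi - U$ is self-adjoint and paraunital, Theorem~\ref{thm:cbSpectralDiam}\parit{ii} gives $\lVert \Psi - U \rVert_{\cb} = \lVert \Psi - U \rVert_{\cbdiam} \ge \lVert \Psi - U \rVert_{\diam} \ge 1 - (1+\epsilon)^{-1}$.

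The delicate point is improving $1 - (1+\epsilon)^{-1} = \epsilon(1+\epsilon)^{-1}$ to the asserted $2\epsilon(1+\epsilon)^{-1}$: the reverse triangle inequality only uses that $W(\Psi(B))$ has diameter $1$ while $W(U(B))$ has diameter exceeding $1+\epsilon$, and does not exploit that the spectrum of $\Phi(A)$ overhangs that of $A$ at \emph{both} ends (Proposition~\ref{prop:sectionisexpansive}). The route I would pursue is to rerun the estimate after amplifying by $\id_2$ and feeding in $B \oplus (-B)$, using $\lVert X \oplus (-X) \rVert_{\diam} = 2\lVert X \rVert$ for self-adjoint $X$ (as in the proof of Theorem~\ref{thm:cbSpectralDiam}) and bounding $\lambda_{\max}$ and $-\lambda_{\min}$ of $(\Psi - U)(B)$ separately by testing against the extremal eigenvectors of $uBu^*$, so that the two overhangs are counted simultaneously. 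Making this last estimate sharp is the main obstacle.
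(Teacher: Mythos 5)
Your argument is internally correct but only proves the weaker bound $\lVert \Psi - U \rVert_{\cb} \ge \epsilon(1+\epsilon)^{-1}$: the reverse triangle inequality applied to a single test operator $B = \Phi(A)$ gives $\lVert (\Psi-U)(B)\rVert_{\diam} \ge \lVert B \rVert_{\diam} - 1$, and after dividing by $\lVert B \rVert_{\diam} \approx 1+\epsilon$ you land at $1 - (1+\epsilon)^{-1}$, half of what the proposition asserts. You acknowledge this, but the route you sketch for recovering the factor of $2$ does not close the gap: even if you use Proposition~\ref{prop:sectionisexpansive} to count the spectral overhang of $B$ over $A$ at both ends (say $\epsilon'$ above and $\epsilon''$ below, $\epsilon'+\epsilon''\ge\epsilon$), testing against extremal eigenvectors of $uBu^*$ gives $\lVert A - uBu^*\rVert_{\diam} \ge \epsilon'+\epsilon''$, while the denominator $\lVert B \rVert_{\diam} = 1+\epsilon'+\epsilon''$ grows by exactly the same amount, so the ratio is again only $\epsilon(1+\epsilon)^{-1}$. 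The amplification by $\id_2$ with $B\oplus(-B)$ merely converts between operator norm and diameter and does not change this ratio. So the "main obstacle" you flag is a genuine one, and the single-test-operator strategy does not appear to overcome it.

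The paper obtains the factor of $2$ from an entirely different mechanism. Writing $\Delta = \id - U^{-1}\circ\Psi$ (a completely bounded map with $\Delta(1_{\mathcal H})=0$ and $\lVert\Delta\rVert_{\cb} = \lVert\Psi-U\rVert_{\cb}$), the Wittstock--Paulsen decomposition produces completely positive $\Delta_\pm$ with $\Delta = \Delta_+-\Delta_-$, $\Delta_+(1_{\mathcal H})=\Delta_-(1_{\mathcal H})$, and $\lVert\Delta_++\Delta_-\rVert$ arbitrarily close to $\lVert\Delta\rVert_{\cb}$; since $-\Delta_-(E)\le\Delta(E)\le\Delta_+(E)$ for self-adjoint $E$, this yields the key estimate $\lVert\Delta\vert_{sa}\rVert \le \tfrac{1}{2}\lVert\Delta\rVert_{\cb}$ --- this halving of the norm on self-adjoint arguments, available precisely because $\Delta$ annihilates the identity, is where the factor of $2$ comes from. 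The identity $\id_{sa} = (\Phi\circ U)\vert_{sa} - \Delta\vert_{sa}\circ(\Phi\circ U)\vert_{sa}$ then gives $1 \ge (1-\tfrac{1}{2}\delta)\lVert\Phi\vert_{sa}\rVert$, and Proposition~\ref{prop:optorNorm-sdiam-ineqs} supplies $\lVert\Phi\vert_{sa}\rVert\ge\lVert\Phi\rVert_{\sdiam}\ge 1+\epsilon$, whence $\delta\ge 2\epsilon(1+\epsilon)^{-1}$. To repair your proof you would need to import this decomposition step (or an equivalent lemma bounding the self-adjoint restriction of an identity-annihilating map by half its cb-norm); as written, the proposal establishes only the constant $1$ rather than $2$.
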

\begin{proof}

Let $\Psi: B(\mathcal H) \to B(\mathcal H)$ be a unital completely positive bijection, with inverse $\Phi$.
Let $u: \mathcal H \to \mathcal H$ be  unitary, $U(x) = uxu^*$\,, and $\delta = \lVert \Phi - U \rVert_{\cb}$.
Define $\Psi' = U^{-1} \circ \Psi$ and $\Phi' = \Phi \circ U$\,: then $\Psi' \circ \Phi' = \id_{B(\mathcal H)}$, and $\delta = \lVert \id_{B(\mathcal H)} -\:\! \Psi' \rVert_{\cb}$\,.

Let $\Delta = \id_{B(\mathcal H)} -\:\! \Psi'$, so that $\lVert \Delta \rVert_{\cb} = \delta$.
We also define the maps $\id_{sa}$, $\Psi_{sa}$, $\Phi_{sa}$, and $\Delta_{sa}$, representing the restrictions (respectively) of $\id_{B(\mathcal H)}$, $\Psi'$, $\Phi'$, and $\Delta$, to self-adjoint operators as arguments.

As $\Delta$ is a difference of unital maps, we have $\Delta(1_{\mathcal H}) = 0$.
By the Wittstock-Paulsen decomposition \cite{Haagerup}, for any $\gamma>0$, we may 
decompose $\Delta = \Delta_+ - \Delta_-$ as the difference of two completely positive maps, for which $\Delta_+(1_{\mathcal H}) = \Delta_-(1_{\mathcal H})$ and
\[
\|\Delta\|_{\cb} = \|\Delta_+ + \Delta_-\| - \gamma\, .
\]
As $\Delta_+(1_{\mathcal H}) = \Delta_-(1_{\mathcal H})$, it then follows that
\[
    \delta+\gamma
\;=\,
    \bigl\lVert \Delta \bigr\rVert_{\cb} + \gamma
\,=\,
    \bigl\lVert 2\;\!\Delta_\pm(1_{\mathcal H}) \bigr\rVert
\,=\,
    2 \bigl\lVert \Delta_\pm \bigr\rVert.
\]
We also have
\[
    -\Delta_-(E) \;\le\; \Delta(E) \;\le\; \Delta_+(E)
\]
for all self-adjoint operators $E$, from which it follows that if $\lVert E \rVert = 1$ as well,
\[
    \bigl\lVert \Delta(E) \big\rVert 
    \;\le\;
    \max\;\Bigl\{ \bigl\lVert \Delta_+(E) \bigr\rVert,\, \bigl\lVert \Delta_-(E) \bigr\rVert \Bigr\}
    \;\le\;
    \bigl\lVert \Delta_\pm \bigr\rVert
    \;=\;
    \tfrac{1}{2}(\delta+\gamma);
\]
thus $\bigl\lVert \Delta_{sa} \bigr\rVert \le \tfrac{1}{2}(\delta+\gamma)$. Since this is true for every $\gamma > 0$ we have $\bigl\lVert \Delta_{sa} \bigr\rVert \le \tfrac{1}{2}\delta$.
Note that
\[
    \id_{sa}
    \,=\,
    \Psi_{sa} \circ \Phi_{sa}
    \,=\,
    (\id_{sa} -\:\! \Delta_{sa}) \circ \Phi_{sa}
    \,=\,
    \Phi_{sa} - (\Delta_{sa} \circ \Phi_{sa}).
\]
Applying the triangle inequality, we then obtain
\[
\qquad
\qquad
    1
    \;\ge\;
    \Bigl\lvert 
        \,\bigl\lVert \Phi_{sa} \bigr\rVert 
        \,-\, 
        \bigl\lVert \Delta_{sa} \circ \Phi_{sa} \bigr\rVert
    \,\Bigr\rvert
    \;\ge\;
    \bigl(1 - \bigl\lVert \Delta_{sa} \bigr\rVert\bigr) \:\! \bigl\lVert \Phi_{sa} \bigr\rVert
    \;\ge\;
    (1 - \tfrac{1}{2}\delta) \:\! \bigl\lVert \Phi_{sa} \bigr\rVert
    \;.
\]
Suppose that $\lVert \Phi \rVert_{\sdiam} \ge 1 + \epsilon$ for $\epsilon \ge 0$.
By Proposition \ref{prop:optorNorm-sdiam-ineqs}, we then have
\[
    1
    \;\ge\;
    (1-\tfrac{1}{2}\delta) \,\lVert \Phi \rVert_{\sdiam}
    \;\ge\;
    (1-\tfrac{1}{2}\delta) (1+\epsilon),
\]
from which it follows that $\delta \ge 2\epsilon (1+\epsilon)^{-1}$.
\end{proof}


\section{Translating finite-dimensional self-adjoint maps by the trace}
We now turn to some finite-dimensional results about complete positivity and translations by the trace.

\begin{proposition}[Choi \cite{Choi}]
For $n\in \mathbb N$ the linear map $\Psi_n: M_n(\mathbb C) \rightarrow M_n(\mathbb C)$ defined by
\[
\Psi_n(A) = n\:\mathrm{tr}(A)I - A
\]
is completely positive.
\end{proposition}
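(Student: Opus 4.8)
The plan is to verify complete positivity directly via Choi's criterion: a linear map $\Phi: M_n(\mathbb C) \to M_n(\mathbb C)$ is completely positive if and only if its Choi matrix $C_\Phi = \sum_{i,j=1}^n E_{i,j} \otimes \Phi(E_{i,j}) \in M_n(\mathbb C)\otimes M_n(\mathbb C)$ is positive semidefinite. So first I would simply compute $C_{\Psi_n}$. Since $\Psi_n(E_{i,j}) = n\,\mathrm{tr}(E_{i,j})I - E_{i,j} = n\,\delta_{i,j} I - E_{i,j}$, and $\sum_i E_{i,i} = I$, one gets
\[
C_{\Psi_n} \;=\; n \sum_{i} E_{i,i}\otimes I \;-\; \sum_{i,j} E_{i,j}\otimes E_{i,j} \;=\; n\, I_{n^2} \;-\; \sum_{i,j} E_{i,j}\otimes E_{i,j}\,.
\]

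The second step is to identify the operator $\sum_{i,j} E_{i,j}\otimes E_{i,j}$. Writing $\Omega = \tfrac{1}{\sqrt n}\sum_{k} e_k \otimes e_k \in \mathbb C^n\otimes\mathbb C^n$ for the maximally entangled unit vector, one has $\Omega\,\Omega^* = \tfrac{1}{n}\sum_{k,l} (e_k e_l^*)\otimes (e_k e_l^*) = \tfrac{1}{n}\sum_{k,l} E_{k,l}\otimes E_{k,l}$, so that $\sum_{i,j} E_{i,j}\otimes E_{i,j} = n\,\Omega\Omega^*$. Hence $C_{\Psi_n} = n\bigl(I_{n^2} - \Omega\Omega^*\bigr)$, and since $\Omega\Omega^*$ is a rank-one orthogonal projection we have $I_{n^2} - \Omega\Omega^* \ge 0$; therefore $C_{\Psi_n} \ge 0$ and $\Psi_n$ is completely positive.

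An alternative, more self-contained route avoids Choi's theorem entirely by exhibiting $\Psi_n$ as a sum of conjugations. If $\{W_k\}_{k=1}^{n^2}$ is the Weyl--Heisenberg family of unitaries on $\mathbb C^n$, normalised so that $W_1 = I$, then $\sum_{k} W_k A W_k^* = n\,\mathrm{tr}(A)\, I$ for every $A$ (the Weyl operators form a unitary $1$-design), whence $\Psi_n(A) = \sum_{k\ge 2} W_k A W_k^*$. Each map $A \mapsto W_k A W_k^*$ is completely positive, and complete positivity is preserved under finite sums, so $\Psi_n$ is completely positive.

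Neither argument presents a serious obstacle. The only points demanding care are the index bookkeeping in the Choi matrix (correctly reading off $\Psi_n(E_{i,j})$ and identifying $\sum_i E_{i,i}\otimes I$ with $I_{n^2}$) and the recognition that $\sum_{i,j}E_{i,j}\otimes E_{i,j}$ is $n$ times a projection; for the alternative route, the one nontrivial ingredient is the Weyl-operator averaging identity. I would present the Choi-matrix computation as the main proof, since given the standard characterisation it reduces to a one-line calculation.
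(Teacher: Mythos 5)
Your proof is correct. Note that the paper gives no proof of this proposition at all --- it is stated as a citation to Choi's 1972 paper --- so there is no in-text argument to compare against. Both of your routes are standard and sound: the Choi-matrix computation correctly yields $C_{\Psi_n} = n\bigl(I_{n^2} - \Omega\Omega^*\bigr)$ with $\Omega\Omega^*$ a rank-one orthogonal projection (and, as a bonus, shows the constant is sharp, since $A \mapsto c\,\mathrm{tr}(A)I - A$ has Choi matrix $cI_{n^2} - n\,\Omega\Omega^*$, which is positive semidefinite precisely when $c \ge n$), while the Weyl-operator averaging identity gives an explicit Kraus representation. Either argument suffices; the Choi-matrix version is the more self-contained of the two given the tools already invoked elsewhere in the paper.
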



\begin{theorem}\label{thm:tracecp}
If $\Phi:M_n(\mathbb C) \rightarrow M_m(\mathbb C)$ is a self-adjoint linear map then there exists a scalar $\beta>0$ such that
\[
A\  \mapsto\  \Phi(A) + \beta\:\mathrm{tr}(A)I_m
\]
is a completely positive map.
\end{theorem}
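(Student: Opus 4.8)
The plan is to pass to Choi matrices and reduce the statement to a fact about a single Hermitian matrix. Recall the Choi-matrix criterion for complete positivity \cite{Choi}: a linear map $\Psi: M_n(\mathbb C) \to M_m(\mathbb C)$ is completely positive if and only if its Choi matrix $C_\Psi = \sum_{i,j=1}^n E_{ij} \otimes \Psi(E_{ij}) \in M_n(\mathbb C) \otimes M_m(\mathbb C) \cong M_{nm}(\mathbb C)$ is positive semidefinite. Since $\Psi \mapsto C_\Psi$ is linear, the Choi matrix of $A \mapsto \Phi(A) + \beta\,\mathrm{tr}(A) I_m$ is $C_\Phi + \beta\, C_{\mathrm{tr}(\cdot)\:\!I_m}$, so everything hinges on understanding these two ingredients.

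First I would compute the Choi matrix of the map $A \mapsto \mathrm{tr}(A)\:\!I_m$. Using $\mathrm{tr}(E_{ij}) = \delta_{ij}$ one gets $\sum_{i,j} E_{ij} \otimes \mathrm{tr}(E_{ij})\:\!I_m = \sum_i E_{ii} \otimes I_m = I_n \otimes I_m = I_{nm}$, so the matrix in question is simply $C_\Phi + \beta I_{nm}$. Next I would observe that the self-adjointness hypothesis $\Phi(A^*) = \Phi(A)^*$ forces $C_\Phi$ to be Hermitian: indeed $C_\Phi^* = \sum_{i,j} E_{ij}^* \otimes \Phi(E_{ij})^* = \sum_{i,j} E_{ji} \otimes \Phi(E_{ji}) = C_\Phi$, where the middle equality uses $E_{ij}^* = E_{ji}$ together with $\Phi(E_{ij})^* = \Phi(E_{ij}^*) = \Phi(E_{ji})$, and the last equality is just reindexing the double sum.

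With these two observations the conclusion is immediate: $C_\Phi$ is a finite Hermitian matrix, hence has real eigenvalues, and taking $\beta = \lVert C_\Phi \rVert + 1 > 0$ (or indeed any $\beta > \max\{0,\, -\lambda_{\min}(C_\Phi)\}$) gives $C_\Phi + \beta I_{nm} \ge (\beta - \lVert C_\Phi \rVert)\:\!I_{nm} > 0$. By the Choi-matrix criterion the translated map is completely positive, and $\beta > 0$ as required.

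There is no substantive obstacle here; the one point requiring a moment's care is the strict inequality $\beta > 0$, which matters only in the already-completely-positive case (where $\lambda_{\min}(C_\Phi) \ge 0$, so one simply takes $\beta = 1$). I would also note in passing that self-adjointness is genuinely necessary: for a non-self-adjoint $\Phi$ the matrix $C_\Phi$ need not be Hermitian, and no positive multiple of $I_{nm}$ can be added to a non-Hermitian matrix to make it positive semidefinite. This is precisely why the subsequent, stronger statement — a trace-translation that is a \emph{section} of a completely positive map — will require the additional scaled-trace-preserving hypothesis rather than following from this argument alone.
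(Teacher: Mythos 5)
Your proof is correct, but it takes a genuinely different route from the paper. You work entirely at the level of the Choi matrix: since $\Phi$ is self-adjoint, $C_\Phi$ is Hermitian, the Choi matrix of $A \mapsto \mathrm{tr}(A)I_m$ is $I_{nm}$, and adding a sufficiently large positive multiple of the identity to a Hermitian matrix makes it positive semidefinite; Choi's theorem then converts this back into complete positivity. The paper instead decomposes $\Phi = \Phi_+ - \Phi_-$ as a difference of completely positive maps (Choi--Kraus), uses the complete positivity of $A \mapsto m\,\mathrm{tr}(A)I - A$ to absorb the $-\Phi_-$ term via $\Phi + m\,\mathrm{tr}(\Phi_-(\cdot))I_m = \Phi_+ + \Psi_m\circ\Phi_-$, and then dominates the resulting functional by a multiple of the trace, arriving at $\beta = m^2\lVert\Phi_-\rVert$. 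Your argument is shorter and yields the essentially optimal constant $\beta > \max\{0, -\lambda_{\min}(C_\Phi)\}$, whereas the paper's route produces an explicit (if crude) constant expressed through the decomposition, which it exploits immediately afterwards to bound $\lVert\Phi\rVert_{\sdiam}$; both arguments are intrinsically finite-dimensional, as the paper's subsequent remark acknowledges. Your closing observation that self-adjointness is necessary (a positive semidefinite Choi matrix is Hermitian, and adding $\beta I_{nm}$ cannot cure non-Hermiticity) is a nice complement not made explicit in the paper.
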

\begin{proof}
By the Choi-Kraus decomposition we know that $\Phi = \Phi_+ - \Phi_-$, that is, $\Phi$ is the difference of two completely positive linear maps. Define 
\[
\varphi(A) = m\:\mathrm{tr}(\Phi_-(A)),
\]
which is positive as it is the composition of two positive maps.
Hence,
\begin{align*}
A \ &\mapsto \ \Phi(A) + \varphi(A)I_m
\\ & = \ \Phi_+(A) + m\:\mathrm{tr}(\Phi_-(A)) - \Phi_-(A)
\\ & = \ \Phi_+(A) + \Psi_m(\Phi_-(A))
\end{align*}
is a completely positive linear map.
Now, for $\|A\|=1$ such that $A\geqslant 0$, we have $\mathrm{tr}(A) \geqslant 1$ and 
\[
\|\varphi\| \;=\; \varphi(I_n) \;=\; m\:\mathrm{tr}(\Phi_-(I_n))
\;\leqslant\; 
m\:\mathrm{tr}(\|\Phi_-\|I_m)
\;=\;
m^2\|\Phi_-\|
\]
Thus, 
\[
A \;\mapsto\; m^2\:\|\Phi_-\|\cdot\mathrm{tr}(A) \,-\,   \varphi(A)
\]
is a positive linear functional, and so is completely positive.
Therefore, defining $\beta = m^2\|\Phi_-\|$ we get that
\begin{align*}
A &\ \mapsto\ \Phi(A) + \beta\:\mathrm{tr}(A)
\\ &\ = \ \Phi(A) + \varphi(A)I_m + \beta\:\mathrm{tr}(A) - \varphi(A)I_m
\end{align*}
is a completely positive map.
\end{proof}

Suppose $\Phi:M_n(\mathbb C) \rightarrow M_m(\mathbb C)$ is a self-adjoint linear map. Let $\beta>0$ be such that $\Phi + \beta I_m \cdot \mathrm{tr}$ be completely positive. Combining this with Theorem \ref{thm:cbSpectralDiam} we get
\[
\|\Phi\|_{\sdiam} = \|\Phi + \beta I_m \cdot \mathrm{tr} \|_{\sdiam} \leqslant \|\Phi + \beta I_m \cdot \mathrm{tr}\| = \Phi(I_n) + \beta n\,.
\]
One can work this out to an explicit constant which depends on $n$ by following through the previous proofs. It will be far from optimal but a bound is nice to have.

\begin{remark}
The previous proposition and theorem do not have infinite-dimensional equivalents. While one could replace the trace with a faithful state (norm 1, positive linear functional that is injective on the positive operators) the difficulty arises from the fact that this state is not bounded below on the positive operators. That problem aside we also have the additional problem that the constant in the proposition goes to infinity as $n$ increases (not that we claim that this constant is optimal).
\end{remark}

\begin{definition}
A linear map $\Phi: M_n(\mathbb C) \rightarrow M_m(\mathbb C)$ is called \textbf{scaled trace-preserving} if there exists $c\in \mathbb C$ such that $\mathrm{tr}(\Phi(A)) = c\:\mathrm{tr}(A)$ for every $A\in M_n(\mathbb C)$.
\end{definition}

\begin{lemma}
Let $\Phi: M_n(\mathbb C) \rightarrow M_m(\mathbb C)$ be a linear map. Then $\Phi$ is scaled trace-preserving if and only if $\Phi$ takes trace-zero matrices to trace-zero.
\end{lemma}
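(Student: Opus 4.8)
The plan is to prove both directions of the equivalence directly, using the decomposition of an arbitrary matrix into its trace-zero part plus a multiple of the identity.

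First I would handle the easy direction: suppose $\Phi$ is scaled trace-preserving, so that $\mathrm{tr}(\Phi(A)) = c\,\mathrm{tr}(A)$ for all $A \in M_n(\mathbb C)$ and some fixed scalar $c$. Then if $\mathrm{tr}(A) = 0$, immediately $\mathrm{tr}(\Phi(A)) = c \cdot 0 = 0$, so $\Phi$ sends trace-zero matrices to trace-zero matrices. This requires no work beyond unwinding the definition.

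For the converse, I would assume $\Phi$ maps trace-zero matrices to trace-zero matrices and produce the constant $c$. The key observation is that every $A \in M_n(\mathbb C)$ can be written as $A = A_0 + \tfrac{1}{n}\mathrm{tr}(A)\, I_n$, where $A_0 = A - \tfrac{1}{n}\mathrm{tr}(A)\,I_n$ has trace zero. Applying linearity of $\Phi$ and then the trace, one gets
\[
\mathrm{tr}(\Phi(A)) = \mathrm{tr}(\Phi(A_0)) + \tfrac{1}{n}\mathrm{tr}(A)\,\mathrm{tr}(\Phi(I_n)) = \tfrac{1}{n}\mathrm{tr}(\Phi(I_n))\cdot\mathrm{tr}(A),
\]
using the hypothesis that $\mathrm{tr}(\Phi(A_0)) = 0$. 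Thus the choice $c = \tfrac{1}{n}\mathrm{tr}(\Phi(I_n))$ works, and $\Phi$ is scaled trace-preserving.

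I do not anticipate a real obstacle here; the statement is essentially a restatement of the fact that the trace-zero matrices form a hyperplane complementary to the span of $I_n$, and a linear functional (here $A \mapsto \mathrm{tr}(\Phi(A))$) vanishing on that hyperplane must be a scalar multiple of any linear functional cutting out the same hyperplane — in this case $\mathrm{tr}$. The only thing worth stating carefully is that $c$ is exactly $\tfrac{1}{n}\mathrm{tr}(\Phi(I_n))$, which also pins down the scalar and will likely be reused in the surrounding discussion of translations by the trace.
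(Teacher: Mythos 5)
Your proof is correct and uses exactly the same argument as the paper: the trivial forward direction, and for the converse the decomposition $A = \bigl(A - \tfrac{1}{n}\mathrm{tr}(A)I_n\bigr) + \tfrac{1}{n}\mathrm{tr}(A)I_n$ with $c = \tfrac{1}{n}\mathrm{tr}(\Phi(I_n))$. Nothing to add.
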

\begin{proof}
The forward direction is trivial. Conversely, let $c=\frac{1}{n}\mathrm{tr}\Big(\Phi(I_n)\Big)$.
Therefore, for any $A\in M_n(\mathbb C)$ we have that
\begin{align*}
    \mathrm{tr}(\Phi(A)) & = \mathrm{tr}\left(\Phi\Big(A - \frac{1}{n}\mathrm{tr}(A)I_n + \frac{1}{n}\mathrm{tr}(A)I_n\Big)\right)
\\ & = \mathrm{tr}\left(\Phi\Big(\frac{1}{n}\mathrm{tr}(A)I_n\Big)\right)
\; =\; \frac{1}{n}\mathrm{tr}\big(\Phi(I_n)\big)\:\mathrm{tr}(A)
\; =\; c\:\mathrm{tr}(A).
\qedhere
\end{align*}
\end{proof}

\begin{corollary}
If $\Phi: M_n(\mathbb C) \rightarrow M_m(\mathbb C)$ is a linear scaled trace-preserving map then so is $\Phi + \alpha\:\mathrm{tr}\cdot I_m$
\end{corollary}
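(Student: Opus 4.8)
The plan is to use the characterisation from the preceding lemma: a linear map $M_n(\mathbb C)\to M_m(\mathbb C)$ is scaled trace-preserving precisely when it sends trace-zero matrices to trace-zero matrices. So I would let $A\in M_n(\mathbb C)$ with $\mathrm{tr}(A)=0$, and simply compute
\[
\bigl(\Phi + \alpha\,\mathrm{tr}\cdot I_m\bigr)(A) \;=\; \Phi(A) + \alpha\,\mathrm{tr}(A)\,I_m \;=\; \Phi(A),
\]
since the second term vanishes. As $\Phi$ is scaled trace-preserving, $\mathrm{tr}(\Phi(A))=0$ by the lemma, so $\mathrm{tr}\bigl((\Phi + \alpha\,\mathrm{tr}\cdot I_m)(A)\bigr)=0$; applying the lemma again in the other direction gives that $\Phi + \alpha\,\mathrm{tr}\cdot I_m$ is scaled trace-preserving.

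Alternatively, and just as quickly, one can argue directly from the definition without invoking the lemma: if $\mathrm{tr}(\Phi(A)) = c\,\mathrm{tr}(A)$ for all $A$, then
\[
\mathrm{tr}\bigl((\Phi+\alpha\,\mathrm{tr}\cdot I_m)(A)\bigr) = \mathrm{tr}(\Phi(A)) + \alpha\,\mathrm{tr}(A)\,\mathrm{tr}(I_m) = (c + \alpha m)\,\mathrm{tr}(A),
\]
so the new scaling constant is $c+\alpha m$. I would probably present this second version as the main line of argument since it even records the constant explicitly, and it makes no appeal to the lemma.

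There is essentially no obstacle here; the only thing to be careful about is the bookkeeping $\mathrm{tr}(I_m)=m$ in the identification of the new constant, and noting that linearity of $\Phi$ and of the trace is all that is used. I would keep the proof to one or two lines.
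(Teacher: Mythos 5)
Your proposal is correct; the paper states this corollary without proof, immediately after the lemma, and either of your two one-line arguments (via the trace-zero characterisation, or the direct computation giving the new constant $c+\alpha m$) is exactly the intended justification.
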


\begin{theorem}
Let $\Phi: M_n(\mathbb C) \to M_m(\mathbb C)$ be a self-adjoint paraunital map, for which $\mathrm{null}(\Phi) \subseteq I_m \cdot \mathbb C$.
  If $\Phi$ is scaled trace-preserving, then there is a scalar $\gamma \in \mathbb R$ for which $$A \ \mapsto \ \Phi(A) + \gamma\:\mathrm{tr}(A)I_m$$ is the section of a completely  positive map. 
\end{theorem}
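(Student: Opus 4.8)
The plan is to exhibit the completely positive map explicitly, as an extension of the inverse of $\Phi + \gamma\,\mathrm{tr}(\cdot)I_m$ on its range, and to verify complete positivity through a Choi‑matrix computation in which the role of $\gamma$ is simply to rescale the one term that could obstruct positivity.

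First I would extract the structural content of the hypotheses. Self‑adjointness gives $\Phi(I_n)=cI_m$ with $c\in\mathbb R$; scaled trace‑preservation then forces $\mathrm{tr}\circ\Phi=\tfrac{mc}{n}\,\mathrm{tr}$, so $\Phi$ sends the traceless matrices of $M_n(\mathbb C)$ into the traceless matrices of $M_m(\mathbb C)$; and $\mathrm{null}(\Phi)\subseteq\mathbb C I_n$ makes $\Phi$ injective on the traceless matrices. Hence $\Phi$ maps the traceless subspace of $M_n(\mathbb C)$ bijectively onto a self‑adjoint subspace $\mathcal R\subseteq M_m(\mathbb C)$ consisting of traceless matrices, so that $I_m\notin\mathcal R$ and $I_m$ lies in the Hilbert–Schmidt complement $\mathcal R^{\perp}$. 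I would then fix the self‑adjoint linear map $L:M_m(\mathbb C)\to M_n(\mathbb C)$ equal to $(\Phi|_{\mathrm{traceless}})^{-1}$ on $\mathcal R$ and equal to $0$ on $\mathcal R^{\perp}$: this is well defined and self‑adjoint because $\mathcal R$ and $\mathcal R^{\perp}$ are $*$‑closed and $\Phi|_{\mathrm{traceless}}$ is self‑adjoint, and it satisfies $L(I_m)=0$.

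Next, for a parameter $\alpha>0$ to be chosen last, set $\gamma=(\alpha-c)/n\in\mathbb R$ and $\Theta=\Phi+\gamma\,\mathrm{tr}(\cdot)I_m$, and define $\Psi:M_m(\mathbb C)\to M_n(\mathbb C)$ by $\Psi(V)=\tfrac1m\mathrm{tr}(V)\,I_n+\alpha\,L(V)$. Writing $A=\tfrac1n\mathrm{tr}(A)I_n+B$ with $B$ traceless, one has $\Theta(A)=\tfrac{\alpha}{n}\mathrm{tr}(A)\,I_m+\Phi(B)$, and feeding this into $\Psi$ while using $L(I_m)=0$, $L(\Phi(B))=B$, and $\mathrm{tr}(\Phi(B))=0$ yields $\Psi\circ\Theta=\alpha\cdot\mathrm{id}_{M_n(\mathbb C)}$. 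Thus $\tfrac1\alpha\Psi$ is a left inverse of $\Theta$, so it remains only to choose $\alpha$ so that $\Psi$ — equivalently $\tfrac1\alpha\Psi$, as $\alpha>0$ — is completely positive, and then $\Theta$ is the desired section of the completely positive map $\tfrac1\alpha\Psi$.

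The decisive observation is that $\Psi$ is the sum of the completely positive map $V\mapsto\tfrac1m\mathrm{tr}(V)I_n$, whose Choi matrix equals $\tfrac1m$ times the identity of $M_m(\mathbb C)\otimes M_n(\mathbb C)$, and of $\alpha L$; and since $L(I_m)=0$ the Choi matrix of $\Psi$ is exactly $\tfrac1m I + \alpha\,C_L$, where $C_L$ is the (Hermitian, because $L$ is self‑adjoint) Choi matrix of $L$. It then suffices to take $\alpha>0$ small enough that $\alpha\lVert C_L\rVert\le\tfrac1m$: the Choi matrix of $\Psi$ is positive semidefinite, so $\Psi$ is completely positive by Choi's theorem, and $\gamma=(\alpha-c)/n$ is a scalar of the kind sought. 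I expect the only genuine care to be the bookkeeping in the previous two paragraphs — that $L$ can be self‑adjoint with $L(I_m)=0$ simultaneously, and that the formula for $\Psi$ restricts on $\mathrm{ran}(\Theta)$ to the honest inverse $\alpha\,\Theta^{-1}$ and not merely a one‑sided inverse — both of which come out of the orthogonal decomposition $M_m(\mathbb C)=\mathcal R\oplus\mathcal R^{\perp}$ with $I_m\in\mathcal R^{\perp}$, which is precisely where the scaled‑trace‑preserving and null‑space hypotheses enter.
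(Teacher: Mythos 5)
Your proof is correct. It shares the paper's overall skeleton --- invert $\Phi$ where it is injective, extend that inverse by zero to a self-adjoint map on all of $M_m(\mathbb C)$, and perturb by a multiple of $\mathrm{tr}(\cdot)\,I_n$ to force complete positivity --- but the execution differs in two genuine ways. First, the paper inverts $\Phi$ on its full range after a preliminary translation arranging $\Phi(I_n)=cI_m$ with $c>0$, and extends by zero across an arbitrarily chosen self-adjoint basis; you instead split $M_n(\mathbb C)$ into $\mathbb C I_n$ plus the traceless matrices and extend by zero on the Hilbert--Schmidt complement of $\mathcal R=\Phi(\{\text{traceless}\})$. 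This packages the hypotheses cleanly (the null-space and scaled-trace conditions say exactly that $\Phi$ is a bijection from traceless onto traceless and that $I_m\perp\mathcal R$) and avoids any case analysis on the sign of $c$. Second, the paper achieves complete positivity of the extended inverse by invoking Theorem~\ref{thm:tracecp} (adding a \emph{large} multiple $\beta\,\mathrm{tr}(\cdot)I_n$, via the Choi--Kraus decomposition), and must then compute the composition and solve a linear equation for $\gamma$; you instead shrink the inverse part by a \emph{small} $\alpha$ against the fixed completely positive map $\tfrac1m\mathrm{tr}(\cdot)I_n$, verify positivity by the one-line Choi-matrix estimate $\tfrac1m I+\alpha C_L\ge 0$, and bake $\gamma=(\alpha-c)/n$ into the construction so that $\Psi\circ\Theta=\alpha\cdot\mathrm{id}$ holds by design. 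Your version is more self-contained and arguably tighter; the paper's version has the virtue of reusing Theorem~\ref{thm:tracecp}, which it wants on record anyway. One cosmetic remark: the identity $C_\Psi=\tfrac1m I+\alpha C_L$ follows from linearity of the Choi matrix alone, so the aside ``since $L(I_m)=0$'' is not needed at that step --- that condition is only used in the computation of $\Psi\circ\Theta$.
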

\begin{proof}
Let $c\in \mathbb R$ give $\Phi(I_n) = cI_m$. If $c<0$ then replace $\Phi$ with $\Phi + (|c|+1)\mathrm{tr}\cdot I_m$ which is still self-adjoint, paraunital and scaled trace-preserving. Thus we may assume that $\Phi$ is injective with $c>0$. Let $k\in\mathbb R$ be such that $\mathrm{tr}(\Phi(A)) = k\:\mathrm{tr}(A)$ for all $A\in M_n(\mathbb C)$.

Now $\mathcal S = \Phi(M_n(\mathbb C))$ is an operator system in $M_m(\mathbb C)$ and so we can define $\Psi: \mathcal S \rightarrow M_n(\mathbb C)$ by $\Psi = \Phi^{-1}$. Hence, $\Psi$ is paraunital, $\Psi(I_m) = \frac{1}{c}I_n$, and self-adjoint. We can extend $\Psi$ to a self-adjoint map on all of $M_m(\mathbb C)$. In particular, we can choose a basis for $M_m(\mathbb C)$ made of self-adjoint matrices such that the first $k$ form a basis for $\mathcal S$. Extend $\Psi$ by sending the remaining basis elements to 0. Thus, $\Phi$ is a section of a paraunital self-adjoint map.

By Theorem \ref{thm:tracecp} there exists $\beta\in \mathbb R$ such that $\Psi + \beta\:\mathrm{tr}\cdot I_n$ is completely positive. For $\gamma\in \mathbb R$ and any $A\in M_n(\mathbb C)$ we have that 
\begin{align*}
(\Psi + \beta\:\mathrm{tr}&\cdot I_n)\circ(\Phi + \gamma\:\mathrm{tr}\cdot I_m)(A)
\\[1ex] & = \Psi\circ\Phi(A) + \Psi(\gamma\:\mathrm{tr}(A)I_m) + \beta\:\mathrm{tr}(\Phi(A))I_n + \beta\:\mathrm{tr}(\gamma\:\mathrm{tr}(A)I_m)I_n
\\ & = A + \gamma\:\mathrm{tr}(A)\frac{1}{c}I_n + \beta k \:\mathrm{tr}(A)I_n + \beta\gamma m \: \mathrm{tr}(A)I_n
\\ & = A + \left(\gamma\left(\frac{1}{c} + \beta m\right) + \beta k \right)\mathrm{tr}(A)I_n
\end{align*}
which used both paraunital and scaled trace-preserving.
Note that $\frac{1}{c} + \beta m > 0$. Therefore, for $\gamma = -\beta k\left(\frac{1}{c} + \beta m\right)^{-1}\in \mathbb R$ we have that $\Phi + \gamma\:\mathrm{tr}\cdot I_m$ is a section of $\Psi + \beta\mathrm{tr}\cdot I_n$, a completely positive map.
\end{proof}

Creating sections of positive maps by translating by multiples of the trace map turns out to be impossible in general if one does not assume scaled trace-preserving as the following example shows.

\begin{example}
Let $\Phi: M_2(\mathbb C) \rightarrow M_2(\mathbb C)$ be defined as
\[
\Phi\left(\left[\begin{matrix} a & b \\ c& d\end{matrix}\right]\right) = \left[\begin{matrix} a-d & b \\ c& a+d\end{matrix}\right]
\]
which is unital, self-adjoint, and injective, but not scaled trace-preserving.
Now for any $\gamma\in \mathbb R$ we have that
\[
(\Phi + \gamma\:\mathrm{tr}\cdot I_2)\left(\left[\begin{matrix} 1 & 0 \\ 0& -1\end{matrix}\right]\right)
= \left[\begin{matrix} 2 & 0 \\ 0& 0\end{matrix}\right],
\]
that is, it takes a non-positive to a positive.
Therefore, for every choice of $\gamma\in \mathbb R$, any section of $\Phi + \gamma\:\mathrm{tr}\cdot I_2$ is not positive.
\end{example}

It is important to note that such translations by scalar multiples of the trace do not change the numerical diameter of these maps. However, it does usually change the completely bounded numerical diameter.

\begin{example}
Consider the identity map $\id : M_2(\mathbb C) \rightarrow M_2(\mathbb C)$. It is clear that \\ $\|\id + \mathrm{tr}(\cdot)I_2\|_{\diam} = \|\id\|_{\diam} =1$. But consider 
\[
\left\| \left[\begin{matrix} 0_2 & I_2 \\ I_2 & 0_2\end{matrix}\right] \right\|_{\diam} = 2
\]
while
\[
\left\|\id + \mathrm{tr}(\cdot)I_2\left( \left[\begin{matrix} 0_2 & I_2 \\ I_2 & 0_2\end{matrix}\right]\right) \right\|_{\diam} = \left\| \left[\begin{matrix} 0_2 & 3I_2 \\ 3I_2 & 0_2\end{matrix}\right] \right\|_{\diam} = 6\,.
\]
Therefore, $\|\id + \mathrm{tr}(\cdot)I_2\|_{\cbdiam} > \|\id\|_{\diam}$.
\end{example}

We end this paper with an example of a section of a completely positive map that has many nice properties and exhibits a completely bounded numerical diameter that is strictly larger than the numerical diameter.

\begin{example}
For $n\geqslant 2$, consider the map $\Psi : M_n(\mathbb C) \rightarrow M_n(\mathbb C)$ given by 
\[
\Psi(A) = \frac{1}{n^2}A^T + \frac{n^2 - 1}{n^3}\mathrm{tr}(A)I_n\, .
\]
We can then calculate that 
\[
[\Psi(E_{ij})]_{i,j=1}^n = \frac{1}{n^2}[E_{ji}]_{i,j=1}^n + \frac{n^2-1}{n^2} I_{n^2} \geqslant 0
\]
because the non-diagonal part of $\frac{1}{n^2}[E_{ji}]_{i,j=1}^n$ has norm at most $\frac{n^2-n}{n^2}$. Hence, by Choi's Theorem \cite[Theorem 3.44]{Paulsenbook} $\Psi$ is completely positive. Notice as well that $\Psi$ is unital and trace-preserving.

It is straightforward to see that the inverse, and so the section, of $\Psi$ is $\Phi : M_n(\mathbb C) \rightarrow M_n(\mathbb C)$ defined by
\[
\Phi(A) = n^2 A^T - \frac{n^2 - 1}{n} \mathrm{tr}(A) I_n\, ,
\]
which is a unital self-adjoint map. By Example \ref{ex:growingcbdiam} and the fact that the numerical diameter ignores scalar multiples of the identity we see that $\|\Phi\|_{\diam} = n^2$. Therefore,  
\[
\|\Phi\|_{\cbdiam} \geqslant \|\Phi\|_{\cb} \geqslant n^3 - n^2 + 1 > n^2 = \|\Phi\|_{\diam}
\]
for $n\geqslant 2$.
\end{example}

\section*{Acknowledgements}

This work began whilst N.dB.\ was affiliated with the University of Oxford, where he was supported by the EPSRC National Hub in Networked Quantum Information Technologies (NQIT.org). C.R.\ was supported by the NSERC Discovery grant 2019-05430. The authors would like to thank MathOverflow for their introduction.

\end{document}